\newlist{enumtheo}{enumerate}{1}
\newlist{enumstep}{enumerate}{1}
\setlist[enumerate,itemize]{wide}
\setlist[enumtheo,enumstep]{wide,label=(\roman*),font=\upshape}
\setlist[enumerate,1]{label=\alph*),font=\upshape}
\theoremstyle{plain}
\newtheorem{theo}[subsection]{Theorem}
\newtheorem{lemm}[subsection]{Lemma}
\newtheorem{prop}[subsection]{Proposition}
\newtheorem{coro}[subsection]{Corollary}
\theoremstyle{definition}
\newtheorem{defi}[subsection]{Definition}
\newtheorem{exam}[subsection]{Example}
\theoremstyle{remark}
\newtheorem{rema}[subsection]{Remark}
\numberwithin{equation}{subsection}
\crefname{rema}{remark}{remarks}
\crefname{defi}{definition}{definitions}
\crefname{theo}{theorem}{theorems}
\crefname{prop}{proposition}{propositions}
\crefname{coro}{corollary}{corollaries}
\crefname{enumtheoi}{item}{items}
\crefname{enumstepi}{step}{steps}
\crefname{diag}{diagram}{diagrams}
\crefname{sequ}{sequence}{sequences}
\newcommand{\NN}{\mathbb{N}}				
\newcommand{\ZZ}{\mathbb{Z}}				
\newcommand{\CC}{\mathbb{C}}				
\newcommand{\HH}{\mathbb{H}}				
\newcommand{\Oka}{\mathscr{O}}				
\newcommand{\pt}{\mathsf{pt}}		
\newcommand{\cC}{\mathcal{C}}
\newcommand{\cF}{\mathcal{F}}
\newcommand{\cK}{\mathcal{K}}
\newcommand{\cX}{\mathcal{X}}
\newcommand{\cY}{\mathcal{Y}}
\newcommand{\sF}{\mathscr{F}}
\newcommand{\sG}{\mathscr{G}}
\newcommand{\sH}{\mathscr{H}}
\newcommand{\sL}{\mathscr{L}}
\newcommand{\pdom}[1]{\mathsf{D}_{#1}}			
\newcommand{\id}[1]{\mathsf{id}_{#1}}		
\newcommand{\pr}[1]{\mathsf{pr}_{#1}}		
\newcommand{\PP}[1]{\mathbb{P}(#1)}			
\newcommand{\CP}[1]{\mathbb{P}^{#1}}		
\newcommand{\Tsp}[2]{\mathsf{T}_{#2}#1}		
\newcommand{\Nb}[2]{\mathscr{N}_{#1/#2}}	
\newcommand{\csh}[2]{\underline{#1}_{#2}}	
\newcommand{\hodge}[3]{\mathsf{h}^{#1,#2}(#3)}		
\newcommand{\Hsh}[3]{\mathsf{H}^{#1}(#2;#3)}		
\newcommand{\hdim}[3]{\mathsf{h}^{#1}(#2;#3)}		
\newcommand{\Hdg}[3]{\mathsf{H}^{#1,#2}(#3)}		
\newcommand{\Rsh}[3]{\mathsf{R}^{#1}#2_*{#3}}		
\newcommand{\Rshp}[3]{\mathsf{R}^{#1}#2_*(#3)}		
\newcommand{\rk}{\operatorname{\mathsf{rk}}}		
\newcommand{\Aut}{\operatorname{\mathsf{Aut}}}		
\newcommand{\Pic}{\operatorname{\mathsf{Pic}}}		
\newcommand{\Sp}{\operatorname{\mathsf{Sp}}}		
\newcommand{\dou}{\operatorname{\mathsf{Dou}}}		
\newcommand{\sHom}{\operatorname{\sH\!\mathit{om}}}	
\renewcommand{\deg}{\operatorname{\mathsf{deg}}}	
\DeclarePairedDelimiter\paren{(}{)}					
\DeclarePairedDelimiter\set{\{}{\}}					
\DeclarePairedDelimiter\abs{\lvert}{\rvert}			
\DeclarePairedDelimiterX\bil[2]{\langle}{\rangle}{#1,#2}
\DeclarePairedDelimiterX\range[2]{\{}{\}}{#1,\dots,#2}	
\DeclarePairedDelimiterX\setb[2]\lbrace\rbrace{#1 \;\delimsize\vert\; #2}	
\renewcommand{\subset}{\subseteq}
\renewcommand{\epsilon}{\varepsilon}
\newcommand{\conj}{\overline}			
\newcommand{\closure}{\overline}		
\newcommand{\rest}[2]{\left.{#1}\right\vert_{#2}}	
\newcommand{\from}{\colon}
\newcommand{\isom}{\cong}
\newcommand{\inv}{^{-1}}
\newcommand{\inj}{\hookrightarrow}
\newcommand*{\defeq}{\coloneqq}
\renewcommand{\tilde}{\widetilde}
\author{Ana-Maria Brecan}
\address{Fakultät für Mathematik, Physik und Informatik\\ Universität Bayreuth}
\email{ana-maria.brecan@uni-bayreuth.de}
\urladdr{\url{http://www.komplexe-analysis.uni-bayreuth.de/de/team/Brecan_Ana-Maria/}}
\author{Tim Kirschner}
\address{Fakultät für Mathematik\\ Universität Duisburg-Essen}
\email{tim.kirschner@uni-due.de}
\urladdr{\url{http://www.esaga.uni-due.de/tim.kirschner/}}
\author{Martin Schwald}
\address{Fakultät für Mathematik\\ Universität Duisburg-Essen}
\email{martin.schwald@uni-due.de}
\urladdr{\url{http://www.esaga.uni-due.de/martin.schwald/}}
\begin{document}
\title{Unobstructedness of hyperkähler twistor spaces}
\date\today
\thanks{Tim Kirschner and Martin Schwald are supported by SFB/Transregio 45 of the DFG}
\begin{abstract}
A family of irreducible holomorphic symplectic (ihs) manifolds over the complex projective line has unobstructed deformations if its period map is an embedding. This applies in particular to twistor spaces of ihs manifolds. Moreover, a family of ihs manifolds over a subspace of the period domain extends to a universal family over an open neighborhood in the period domain.
\end{abstract}
\maketitle
\tableofcontents

\section{Introduction}

\subsection{}
Unobstructedness is one of the fundamental deformation-theoretic properties that a compact complex manifold $X$ can enjoy. The term goes back to Kodaira and Spencer \cite{KS58} who call elements $\xi \in \Hsh1X{\Theta_X}$ “obstructed” when $[\xi,\xi]\ne0$ in the cohomology group $\Hsh2X{\Theta_X}$, where $\Theta_X$ denotes the sheaf of holomorphic vector fields on $X$ and the bracket combines the sheaf-cohomological cup product and the ordinary Lie bracket of vector fields.

In modern terminology we say that $X$ is \emph{unobstructed} or has \emph{unobstructed deformations} when $X$ possesses a semi-universal, also known as “miniversal,” deformation over a smooth pointed complex space; in other words, $X$ possesses a smooth local moduli space. The old and new notions of (un)obstructedness are related by the following observation: Given a deformation of $X$ over a smooth pointed complex space $(B,b)$ and an element $\xi$ in the image of the associated Kodaira--Spencer map $\Tsp Bb \to \Hsh1X{\Theta_X}$, we know that $[\xi,\xi] = 0$. Since the Kodaira--Spencer map of a complete deformation maps onto $\Hsh1X{\Theta_X}$, we see that no obstructed elements can exist if $X$ is unobstructed. Conversely, by a theorem of Kodaira, Spencer, and Nirenberg, $X$ is unobstructed when the group $\Hsh2X{\Theta_X}$ is trivial \cite{KNS58}.

\subsection{}
In this note we investigate the local deformation theory of compact complex manifolds $X$ that admit a holomorphic submersion $f \colon X \to \CP1$ to the complex projective line such that for every point $t \in \CP1$ the fiber $X_t = f^{-1}(t)$ is an irreducible holomorphic symplectic manifold. We say that $(X,\CP1,f)$ is a family of ihs manifolds under these circumstances.

The main motivation for considering such $X$ is \emph{twistor space}. Recall that when $(M,g)$ is a connected Riemannian manifold whose holonomy group at a point $x \in M$ is, under a suitable identification of inner product spaces $\Tsp Mx \cong \HH^n$, equal to the quaternionic unitary group $\Sp(n)$, then the space $C$ of Kählerian complex structures on $(M,g)$ is diffeomorphic to $\CP1$. The twistor space of $(M,g)$ bundles the complex structures contained in $C$ into a $(2n+1)$-dimensional complex manifold $Z$ such that the underlying differentiable manifold of $Z$ is $M\times C$ and the projection map $\pr2 \colon Z \to C$ is holomorphic for a choice of complex structure on $C$ \cite{HKLR}. If $M$ is compact and $n>0$, we recover a family of ihs manifolds.

\begin{theo}[Unobstructedness theorem]\label{unobs thm}
Let $(X,\CP1,f)$ be a family of ihs manifolds whose period map $h\colon\CP1 \to \pdom\Lambda$, with respect to a $\Lambda$-marking $\mu$, is an embedding. Then $X$ has unobstructed deformations.

When $r$ is the second Betti number of a fiber of $f$ and $d=-\deg (f_*\Omega^2_{X/\CP1})$, then $d\ge2$ and
\begin{equation*}
\dim_\CC\Hsh1X{\Theta_X} = (r-2)(d+1)-3.
\end{equation*}
Moreover, the space $\Hsh0X{\Theta_X}$ is trivial.
\end{theo}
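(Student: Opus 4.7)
The plan is to compute $\Hsh{0}{X}{\Theta_X}$ and $\Hsh{1}{X}{\Theta_X}$ via the Leray spectral sequence for $f$, and then to deduce unobstructedness from the extension theorem announced in the abstract.

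First, the fiberwise symplectic form provides the identification $\Theta_{X/\CP1} \cong \Omega^1_{X/\CP1} \otimes f^*\Oka_{\CP1}(d)$, since $f_*\Omega^2_{X/\CP1}$ is a line bundle of degree $-d$. Combining this with the Beauville--Bogomolov--Fujiki-polarized variation of Hodge structure on $\Rsh{2}{f}{\CC}$ yields a natural isomorphism
\[
\Rsh{1}{f}{\Theta_{X/\CP1}} \cong h^*\Theta_{\pdom\Lambda},
\]
under which the Kodaira--Spencer coboundary $\Theta_{\CP1} \to \Rsh{1}{f}{\Theta_{X/\CP1}}$ matches the differential $dh$ of the period map. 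The ihs fibers satisfy $\hodge{1}{0}{X_t} = \hodge{0}{1}{X_t} = 0$, so $f_*\Theta_{X/\CP1} = 0$ and $\Rsh{1}{f}{\Oka_X} = 0$. Combined with the injectivity of $dh$ from the embedding hypothesis, the long exact sequence obtained by applying $\mathsf{R}f_*$ to the relative tangent sequence then gives $f_*\Theta_X = 0$ and identifies $\Rsh{1}{f}{\Theta_X}$ with the normal bundle $\mathcal{N}$ of $h(\CP1)$ in $\pdom\Lambda$. Since $\CP1$ has cohomological dimension $1$, the Leray spectral sequence degenerates, producing $\Hsh{0}{X}{\Theta_X} = 0$ and $\Hsh{1}{X}{\Theta_X} \cong \Hsh{0}{\CP1}{\mathcal{N}}$.

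The next step is to establish $\Hsh{1}{\CP1}{\mathcal{N}} = 0$ by global generation. The period domain $\pdom\Lambda$ is a complex homogeneous space under $O(2, r-2)$, so $\Theta_{\pdom\Lambda}$ is a quotient of a trivial bundle and therefore $h^*\Theta_{\pdom\Lambda}$ is globally generated on $\CP1$---hence a direct sum of line bundles of non-negative degree with vanishing first cohomology. The normal bundle sequence
\[
0 \to \Oka_{\CP1}(2) \to h^*\Theta_{\pdom\Lambda} \to \mathcal{N} \to 0
\]
transports the vanishing to $\mathcal{N}$, and Riemann--Roch on $\CP1$ then delivers
\[
\dim \Hsh{0}{\CP1}{\mathcal{N}} = \chi(\mathcal{N}) = \chi(h^*\Theta_{\pdom\Lambda}) - \chi(\Oka_{\CP1}(2)) = (r-2)(d+1) - 3.
\]
For the bound $d \geq 2$: the case $d = 0$ contradicts injectivity of $h$, while $d = 1$ would make $h(\CP1)$ a projective line in the quadric $\pdom\Lambda$, i.e.\ the projectivization of a $2$-dimensional BBF-isotropic subspace $\ell \subset \Lambda \otimes \CC$ on which the Hermitian form $(\omega, \bar\omega)$ is positive definite. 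Isotropy forces $\ell \cap \bar\ell = 0$, whence $\ell \oplus \bar\ell$ would be a $4$-dimensional subspace of the positive Hermitian part, contradicting the real signature $(3, r-3)$ of the BBF form.

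Finally, to prove unobstructedness I would apply the paper's extension theorem to extend $f$ to a universal family $\tilde{f}\colon \tilde{X} \to U$ over an open neighborhood $U$ of $h(\CP1)$ in $\pdom\Lambda$. Since $\Hsh{1}{\CP1}{\mathcal{N}} = 0$, the Douady space $\dou(U)$ is smooth at $[h(\CP1)]$ of dimension $(r-2)(d+1) - 3$, and pulling $\tilde{X}$ back along the universal compact subspace produces a deformation of $X$ over this smooth base. The induced Kodaira--Spencer map $\Tsp{\dou(U)}{[h(\CP1)]} \cong \Hsh{0}{\CP1}{\mathcal{N}} \to \Hsh{1}{X}{\Theta_X}$ is an isomorphism by construction, so this is a semi-universal deformation of $X$ on a smooth base, i.e.\ $X$ is unobstructed. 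The main obstacle I anticipate lies in this last step: carefully tracing through the natural transformations to confirm that the Kodaira--Spencer map of the Douady-parametrized family is indeed the claimed isomorphism, and that universality is inherited from $\tilde{f}$; the identifications of the first paragraph likewise require careful compatibility checks between the symplectic form, the BBF polarization, and the Hodge filtration.
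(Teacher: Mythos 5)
Your cohomological computations are essentially correct and in places slicker than the paper's: identifying $\Rsh1f{\Theta_{X/\CP1}}$ with $h^*\Theta_{\pdom\Lambda}$, hence $\Rsh1f{\Theta_X}$ with the normal bundle $\mathscr N$ of $h(\CP1)$, and killing $\Hsh1{\CP1}{\mathscr N}$ by global generation of the tangent bundle of the quadric replaces the paper's Birkhoff--Grothendieck computation in \cref{relative froelicher} and, in effect, merges \cref{tg cohom thm} with the normal-bundle computation of \cref{q} (minor slip: the relevant real form is $O(3,r-3)$, not $O(2,r-2)$, but all you actually need is homogeneity of the projective quadric). The argument for $d\ge2$ is the same signature argument as \cref{no linear subspace}. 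These identifications do require the compatibility checks you mention, but they are standard.

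The genuine gap is in the last step. The assertion that the Kodaira--Spencer map $\Tsp{\dou(U)}{[h(\CP1)]}\cong\Hsh0{\CP1}{\mathscr N}\to\Hsh1X{\Theta_X}$ ``is an isomorphism by construction'' is not by construction at all: the construction only gives a family over the smooth Douady germ, and identifying its Kodaira--Spencer map with your Leray/normal-bundle isomorphism is a nontrivial compatibility between the Douady tangent-space description, the differential of the period map of the extended family (local Torelli in families), and the Leray edge map --- exactly the verification you flag as the ``main obstacle'' and do not carry out. Moreover, even granting that bijectivity, semi-universality does not follow formally: you must either invoke the Kodaira--Spencer theorem of completeness (surjective Kodaira--Spencer map over a smooth base implies completeness) or prove completeness directly. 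The paper deliberately avoids your compatibility claim by arguing in the opposite order: it first proves completeness of the Douady-pullback family (\cref{pf step 2}, the bulk of the work, using Kodaira's stability of fiber structures \cref{horikawa}, the universal-morphism theorem along subspaces \cref{universal map} with its gluing lemma, degree constancy, and the universal property of the Douady space); surjectivity of the Kodaira--Spencer map is then a \emph{consequence} of completeness, and bijectivity follows from the independently computed dimension $(r-2)(d+1)-3$. So your route could work and would be an attractive alternative, but the Kodaira--Spencer compatibility plus an explicit completeness criterion are precisely the missing content, and neither is a routine check.
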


As a corollary we obtain that twistor spaces of compact irreducible hyperkähler manifolds are unobstructed. In fact, in the twistor case we see that $d=2$, which implies that the condition on the period map in \cref{unobs thm} is automatic. Compare \cref{twistor example,closed embedding}.

\subsection{}
The most popular unobstructedness criteria cannot be applied to prove \cref{unobs thm}---for example, the group $\Hsh2X{\Theta_X}$ is typically nontrivial (\cref{tg cohom thm}) and $X$ is neither Kählerian nor has trivial canonical bundle (\cref{canonical}). Thus the criteria of Kodaira--Spencer--Nirenberg and Tian--Todorov \cites{Tia87,Tod89} fail, respectively.

Our proof of \cref{unobs thm} follows a hands-on approach. The first key ingredient is that, essentially by virtue of Kodaira's theorem on the stability of fiber structures \cite{Kod63}, every deformation of $X$ induces a deformation of $h(\CP1)$ in $\pdom{\Lambda}$. Hence we obtain a description of the local moduli space of $X$ as a germ of the Douady space of $\pdom\Lambda$.
This leads us to think of \cref{unobs thm} not merely as an abstract deformation-theoretic statement, but as the primary step towards a moduli theory for families of ihs manifolds over $\CP1$. We want to emphasize this advantage of our approach over, for instance, the techniques of Ran who has obtained similar results \cite[116--117]{Ran92}. Ran's abstract functorial approach, however, can never yield a tangible description of the moduli space of interest.

\subsection{}
As a second key ingredient for \cref{unobs thm} we prove an extension theorem for $\Lambda$-marked families of ihs manifolds. The extension theorem implies that every deformation of $h(\CP1)$ in $\pdom\Lambda$ lifts to a deformation of $X$. We hope that this theorem meets interest aside from its use in this paper: While we know that a universal family on the moduli space of $\Lambda$-marked ihs manifolds does not exist \cite[][Remark 4.4]{Huy12}, we can still produce universal families on large open subspaces.

\begin{theo}[Extension theorem]\label{ihs extension thm}
Let $\cF$ be a $\Lambda$-marked family of ihs manifolds over a complex space $S$ such that the period map $h \colon S \to \pdom\Lambda$ of $\cF$ is an embedding. Then there exists a $\Lambda$-marked family of ihs manifolds $\tilde\cF$ over an open subspace $U$ of $\pdom\Lambda$ such that
\begin{enumerate}
\item $h(S)$ is contained in $U$,
\item there exists a morphism of $\Lambda$-marked families $\phi \colon \cF \to \tilde\cF$ over the induced map $h \colon S \to U$, and
\item the period map of $\tilde\cF$ is the canonical injection $U \inj \pdom\Lambda$.
\end{enumerate}
\end{theo}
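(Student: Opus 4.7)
The plan is to extend $\cF$ locally around each point of $h(S)$ via Kuranishi theory and local Torelli, and then to glue these local extensions using the rigidity afforded by the absence of nonzero holomorphic vector fields on ihs manifolds.

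For each $s\in S$ the $\Lambda$-marked ihs manifold $\cF_s$ admits a $\Lambda$-marked semi-universal deformation $(\cG_s,\Delta_s,0)$ over a smooth pointed complex space. Beauville's local Torelli theorem makes the associated period map $\pi_s\colon\Delta_s\to\pdom\Lambda$ a local biholomorphism at $0$, so after shrinking we regard $\pi_s$ as a biholomorphism onto an open neighborhood $U_s$ of $h(s)$ in $\pdom\Lambda$. Transporting $\cG_s$ along $\pi_s\inv$ yields a $\Lambda$-marked family $\tilde\cF_s$ over $U_s$ whose period map is the canonical inclusion, and semi-universality of $\cG_s$ provides a $\Lambda$-marked morphism $\phi_s\colon\cF|_{h\inv(U_s)}\to\tilde\cF_s$ over the restriction of $h$.

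Two facts drive the gluing. First, $\tilde\cF_s$ is semi-universal at \emph{every} point of $U_s$: its period map is the inclusion, while local Torelli forces the period map of any semi-universal deformation to be a local biholomorphism, and a dimension comparison upgrades versality at the central point to versality at every point of $U_s$. Second, the classical vanishing $\Hsh0X{\Theta_X}=0$ for every ihs $X$ (a consequence of $\Theta_X\isom\Omega^1_X$ via the symplectic form together with $\betti{1}{X}=0$) makes each semi-universal $\Lambda$-marked deformation unique up to \emph{unique} isomorphism. Consequently, on any overlap $U_s\cap U_{s'}$ the composition $\phi_{s'}\circ\phi_s\inv$ identifies $\tilde\cF_s$ with $\tilde\cF_{s'}$ canonically over $h\inv(U_s\cap U_{s'})\subseteq S$; by pointwise semi-universality this identification spreads to an open neighborhood of $h(S)\cap U_s\cap U_{s'}$ in $U_s\cap U_{s'}$. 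Arranging the cover $\{U_s\}$ to be adapted to the embedded subspace $h(S)$, so that every connected component of every non-empty overlap meets $h(S)$, we may assume this neighborhood is the entire overlap; cocycles on triple overlaps are automatic by uniqueness, and descent produces $\tilde\cF$ over $U=\bigcup_s U_s$ together with the required $\phi\colon\cF\to\tilde\cF$.

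The main obstacle is the gluing step: one must arrange the cover $\{U_s\}$ fine enough that the canonical identifications defined on neighborhoods of $h(S)$ inside each overlap extend to canonical identifications over the entire overlaps. This is precisely where the embedding hypothesis on $h$ enters; without it, distinct $\Lambda$-marked ihs manifolds with coinciding periods---a well-known source of non-separatedness of the moduli of ihs manifolds---would obstruct the existence of a single family on an open subspace of $\pdom\Lambda$.
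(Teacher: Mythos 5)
Your overall strategy---extend locally via Kuranishi families and local Torelli, then glue using the rigidity coming from $\Hsh0X{\Theta_X}=\set0$---is the same as the paper's, and both the local step and the pointwise semi-universality of the local extensions (cf.\ \cref{torellilike}) are fine. The genuine gap is in the gluing step, at the sentence ``Arranging the cover $\{U_s\}$ to be adapted to the embedded subspace $h(S)$, so that every connected component of every non-empty overlap meets $h(S)$, we may assume this neighborhood is the entire overlap.'' This cannot be arranged. The comparison isomorphism $\phi_{s'}\circ\phi_s^{-1}$ is only produced on some open neighborhood $W_{ss'}$ of $h(S)\cap U_s\cap U_{s'}$ inside the overlap (this is \cref{step 2}, via \cref{universal map ihs}), and nothing extends it over a whole connected component merely because that component meets $h(S)$: the rigidity statements (\cref{rigidity}, \cref{eta=eta' coro}) say that two \emph{given} morphisms over the same base which agree on one fiber agree on an open set; they do not extend a morphism beyond its domain of definition. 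Indeed extension to the full overlap can genuinely fail: over points of $U_s\cap U_{s'}$ away from $h(S)$ the fibers of the two local families are marked ihs manifolds with equal periods, and by the non-separatedness of the marked moduli space they need not be isomorphic at all, no matter how the cover is refined around $h(S)$. The same issue undermines your claim that the cocycle condition on triple overlaps is ``automatic by uniqueness'': uniqueness only yields it on an open neighborhood of $h(S)\cap U_{ijk}$ inside the triple overlap (\cref{step 3}).

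The paper closes exactly this gap by never extending the identifications to full overlaps. It passes to a locally finite shrinking $(V_i)_{i\in I}$ of the cover with $\closure{V_i}\subset U_i$, defines the set $W$ of points at which all identifications and cocycle relations indexed by the (locally finitely many) relevant indices hold, verifies that $W$ is open and contains $h(S)$ (the argument of \cref{partial gluing}, repeated in \cref{actual proof}), and then glues the families restricted to $V_i\cap W$ by \cref{gluing families}, obtaining $\tilde\cF$ over the open set $W$ rather than over $\bigcup_{i\in I}U_i$. Some device of this kind---or an explicit argument, again resting on local finiteness, that shrinks the cover until each pairwise and triple overlap is contained in the corresponding $W_{ss'}$ and $Z_{ss's''}$---is needed to make your gluing step, and hence your proof, complete.
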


\subsection{Outline of the paper}
In \cref{conventions sec} we compile, for easy reference, a list of terminology and notation that we employ throughout the paper. We begin a systematic investigation of families of ihs manifolds over smooth rational curves in \cref{systematic sec}. The main part of the proof of \cref{unobs thm} is carried out in \cref{unobs sec}. Even though they enter into \cref{unobs sec}, we postpone certain sheaf-cohomological computations to \cref{tg cohom sec} and the fairly technical proof of \cref{ihs extension thm} to the end of our paper---namely, to \cref{universal map sec,extension sec}.

\subsection*{Acknowledgments}
Daniel Greb deserves considerable thanks for he has been fostering our collaboration since day one.
We thank Daniel Huybrechts for answering several of our questions in the early stages of the project.
Ana-Maria Brecan extends her gratitude to Alan Huckleberry and Hans Franzen for their unceasing support.
Tim Kirschner thanks Jun-Muk Hwang for the opportunity to work on this paper at the Korea Institute for Advanced Study.

\section{Conventions, terminology, notation}\label{conventions sec}

\subsection{}\label{spaces sec}
In our definitions of complex spaces, holomorphic maps, subspaces, etc. we follow Gerd Fischer \cite[9--10]{Fischer}. In particular we assume that the underlying topological space of a complex space is Hausdorff.

\subsection{}\label{families sec}
A \emph{family of compact complex manifolds} is a triple $\cF=(X,S,f)$ where $X$ and $S$ are complex spaces and $f \colon X\to S$ is a proper holomorphic submersion. The complex space $S$ is called the \emph{base space} of the family $\cF$. When $S$ is the base space of $\cF$, we say that $\cF$ is a family \emph{over} $S$. 

Given two families $\cF = (X,S,f)$ and ${\cF' = (X',S',f')}$ of compact complex manifolds, a \emph{morphism of families} from $\cF'$ to $\cF$ is a pair $\phi = (g,h)$ such that
\begin{equation}\label[diag]{mor of families}
\begin{tikzcd}
X' \rar{f} \dar{f'} & X \dar{f} \\ S' \rar{h} & S
\end{tikzcd}
\end{equation}
is a Cartesian square of complex spaces and holomorphic maps. We write this as $\phi \colon \cF' \to \cF$, and we say that $\phi$ is a morphism \emph{over} $h$. Note that $\phi$ is an isomorphism of families if $h$ is an isomorphism of complex spaces. We call a morphism of families over $\id S\colon S\to S$ an $S$-\emph{morphism}, or $S$-\emph{isomorphism}, \emph{of families}.

\subsection{}\label{pullback sec}
Let $b \colon T \to S$ be a holomorphic map. Then for every family of compact complex manifolds $\cF = (X,S,f)$ over $S$, the \emph{pullback} of $\cF$ by $b$ is the triple
\begin{equation}\label{pullback triple}
b^*(\cF) \coloneqq (X \times_S T,T,p_2),
\end{equation}
where $X\times_ST$ denotes the fiber product of complex spaces over the holomorphic maps $f\colon X\to S$ and $b\colon T \to S$ and where $p_i$, for $i \in \set{1,2}$, denotes the $i$th projection map of this fiber product. Observe that $b^*(\cF)$ is a family of compact complex manifolds over $T$ and that $\eta \coloneqq (p_1,b) \colon b^*(\cF) \to \cF$ is a morphism of families over $b$.

When $\phi \colon \cF' \to \cF$ is an $S$-morphism of families, we denote by $b^*(\phi)$ the unique $T$-morphism of families $b^*(\cF')\to b^*(\cF)$ such that $\eta \circ b^*(\phi) = \phi \circ \eta'$ where $\eta'$ is the canonical morphism of families $b^*(\cF') \to \cF'$ over $b$.

\subsection{}\label{restriction sec}
When the holomorphic map $b \colon T\to S$ in \cref{pullback sec} is the canonical injection of a complex subspace, we write $\cF_T$ and $\phi_T$ for $b^*(\cF)$ and $b^*(\phi)$, respectively. Observe that in this case the fiber product $X\times_ST$ appearing in \cref{pullback triple} is nothing but the inverse image $f^{-1}(T)$ of the complex subspace $T \subset S$ under the holomorphic map $f \colon X\to S$ \cite[23]{Fischer}. Moreover, the first and second projections of the fiber product correspond to the canonical injection of $f^{-1}(T)$ and the restriction of $f$, respectively.

\subsection{}\label{fiber sec}
When $f\colon X\to S$ is a holomorphic map and $s \in S$ is a point, we use the standard notation $X_s$ for the complex analytic fiber of $f$ over $s$. When $\cF = (X,S,f)$ is a family of compact complex manifolds, we write $\cF(s)$ as a synonym for $X_s$ and call this the \emph{fiber} of $\cF$ over $s$, too.

When $\phi \colon \cF' \to \cF$ is a morphism of families over a holomorphic map $h \colon S' \to S$, then for every point $s \in S'$ we let $\phi(s)$ denote the induced holomorphic map $\cF'(s) \to \cF(h(s))$, which is a biholomorphism.

\subsection{}\label{ihs sec}
An \emph{irreducible holomorphic symplectic manifold}---for short, \emph{ihs manifold}---is a simply connected, compact, Kählerian, holomorphic symplectic complex manifold $X$ satisfying $\hdim0X{\Omega^2_X} = 1$ \cite[cf.][763--764]{Bea83}.

The cohomology group $\Hsh2X\ZZ$ of an ihs manifold $X$ is naturally, by virtue of a rescaling of the Beauville--Bogomolov form of $X$, a \emph{lattice}---that is, a free abelian group of finite rank endowed with a symmetric integral bilinear form \cite[Théorème 5]{Bea83}. Indeed there is a unique such rescaling by a minimal, strictly positive real number. When $\Lambda$ is a lattice, a $\Lambda$-\emph{marking} of $X$ is a lattice isomorphism $\mu \colon \Hsh2X\ZZ \to \Lambda$.

\subsection{}\label{ihs family sec}
A \emph{family of ihs manifolds} is a family of compact complex manifolds $\cF$ such that for every point $s$ of the base space of $\cF$ the fiber $\cF(s)$ is an ihs manifold.

When $\Lambda$ is a lattice, a $\Lambda$-\emph{marking} of a family of ihs manifolds $\cF = (X,S,f)$ is an isomorphism of sheaves of abelian groups $\mu \colon \Rsh2f{\csh\ZZ X} \to \csh\Lambda S$ such that, for every point $s \in S$, the induced map $\mu_s \colon \Hsh2{X_s}\ZZ \to \Lambda$ is a $\Lambda$-marking of the fiber $X_s = \cF(s)$ of $\cF$. A $\Lambda$-\emph{marked family (of ihs manifolds)} is a pair $(\cF,\mu)$ where $\cF$ is a family of ihs manifolds and $\mu$ is a $\Lambda$-marking of $\cF$. When $\cF$ is a family over $S$, we say that $(\cF,\mu)$ is a $\Lambda$-marked family \emph{over} $S$. We might occasionally drop the reference to $\Lambda$ in our terminology.

\subsection{}\label{marked mor sec}
Let $\Lambda$ be a lattice and $(\cF,\mu) = (X,S,f,\mu)$ and $(\cF',\mu') = (X',S',f',\mu')$ be $\Lambda$-marked families of ihs manifolds. Then a \emph{morphism of $\Lambda$-marked families} from $(\cF',\mu')$ to $(\cF,\mu)$ is a morphism of families $\phi = (g,h) \colon \cF' \to \cF$ such that the following diagram of sheaves of abelian groups on $S'$ commutes:
\begin{equation}\label[diag]{marked mor diagram}
\begin{tikzcd}
h^{-1}(\Rsh2f{\csh\ZZ X}) \rar{\alpha} \dar{h^{-1}(\mu)} & \Rsh2{f'}{\csh\ZZ{X'}} \dar{\mu'} \\
h^{-1}(\csh\Lambda S) \rar{\beta} & \csh\Lambda{S'}
\end{tikzcd}
\end{equation}
In \cref{marked mor diagram}, $\alpha$ denotes the topological base change map associated to \cref{mor of families} and the constant sheaves of abelian groups with value $\ZZ$. Analogously $\beta$ denotes the canonical $h$-map between the constant sheaves of abelian groups with value $\Lambda$. As usual we write $\phi \colon (\cF',\mu') \to (\cF,\mu)$ for the fact that $\phi$ is a morphism from $(\cF',\mu')$ to $(\cF,\mu)$.

\begin{rema}\label{induced marking}
Let $\cF$ and $\cF'$ be families of ihs manifolds, $\phi \colon \cF' \to \cF$ be a morphism of families, $\Lambda$ be a lattice, and $\mu$ be a $\Lambda$-marking of $\cF$. Then there exists a unique $\Lambda$-marking $\mu'$ of $\cF'$ so that $\phi \colon (\cF',\mu') \to (\cF,\mu)$ is a morphism of $\Lambda$-marked families.
\end{rema}
\begin{proof}
Write $\cF = (X,S,f)$ and $\cF' = (X',S',f')$. Then since $f \colon X\to S$ is a proper holomorphic map and since \cref{mor of families} is a Cartesian square of complex spaces, the topological base change map $\alpha$ in \cref{marked mor diagram} is an isomorphism of sheaves of abelian groups on $S'$. This proves the uniqueness. To see the existence define $\mu'$ as the composition of $\alpha^{-1}$, $h^{-1}(\mu)$, and $\beta$. Then $\mu'$ is an isomorphism of sheaves of abelian groups, for $\alpha$, $\beta$, and $\mu$ are. Moreover, for every point $s \in S'$, we see that $\mu'_s \circ \phi(s)^* = \mu_{h(s)}$ where $\mu_{h(s)} \colon \Hsh2{X_{h(s)}}\ZZ \to \Lambda$ and $\mu'_s \colon \Hsh2{X'_s}\ZZ \to \Lambda$ denote the maps induced by $\mu$ and $\mu'$, respectively, and where $\phi(s)^*$ denotes the map that $\phi(s) \colon X'_s \to X_{h(s)}$ induces on the second cohomology with values in $\ZZ$. Since $\mu_{h(s)}$ and $\phi(s)^*$ are isomorphisms of lattices, this proves that $\mu'_s$ is a $\Lambda$-marking of $X'_s$.
\end{proof}

\subsection{}\label{marked pullback sec}
By virtue of \cref{induced marking} we are able to adapt the language and notation of \cref{pullback sec,restriction sec} for marked families.
Indeed when $b \colon T\to S$ is a holomorphic map and $\cF$ is a family of ihs manifolds over $S$, then $b^*(\cF)$ is a family of ihs manifolds over $T$. Moreover, when $\Lambda$ is a lattice and $\mu$ is a $\Lambda$-marking of $\cF$, there exists a unique marking $\nu$ of $b^*(\cF)$ so that the canonical morphism of families $\eta \colon b^*(\cF) \to \cF$ is a morphism of $\Lambda$-marked families from $(b^*(\cF),\nu)$ to $(\cF,\mu)$. Accordingly we define the \emph{pullback} of the $\Lambda$-marked family $(\cF,\mu)$ by $b$ as $b^*(\cF,\mu) \coloneqq (b^*(\cF),\nu)$. Observe that when $\phi \colon (\cF',\mu') \to (\cF,\mu)$ is an $S$-morphism of $\Lambda$-marked families, then the $T$-morphism of families $b^*(\phi) \colon b^*(\cF') \to b^*(\cF)$ defined in \cref{pullback sec} is a morphism of $\Lambda$-marked families from $b^*(\cF',\mu')$ to $b^*(\cF,\mu)$.

\subsection{}\label{pdom sec}
Given a lattice $\Lambda$ of rank $r\ge 3$ and signature $(3,r-3)$ we let $\pdom\Lambda$ denote the \emph{period domain} associated to $\Lambda$; that is, $\pdom\Lambda$ is the complex subspace of the projective space of lines $\PP{\Lambda_\CC}$ induced on the locally closed analytic subset
\[
\setb{\CC x}{x \in \Lambda_\CC \setminus \set0,\; xx=0,\; x\conj x>0}
\]
where $\Lambda_\CC \coloneqq \CC \otimes_\ZZ \Lambda$. Note that the complex vector space $\Lambda_\CC$ is naturally endowed, for one, with a symmetric complex bilinear form written $(x,y) \mapsto xy$, which extends the bilinear form of $\Lambda$, and, for another, with a real structure written $x \mapsto \conj x$. Note furthermore that the equation $xx = 0$ defines a nondegenerate (i.e., smooth) quadric $Q$ in the projective space $\PP{\Lambda_\CC}$. The period domain $\pdom\Lambda$ can thus be viewed as an open complex submanifold of $Q$.

\subsection{}\label{pm sec}
Let $\cF = (X,S,f)$ be a family of ihs manifolds. Then the sheaf of $\Oka_S$-modules $f_*\Omega^2_{X/S}$ is locally free of rank $1$. When the complex space $S$ is reduced, this is a direct consequence of Grauert's base change theorem \cite[64]{EinTheorem}. The statement remains true though for arbitrary $S$. One way to see this is to invoke the unobstructedness of ihs manifolds, see \cref{local Torelli}, by which the family $\cF$ is---at least locally at every point of $S$---isomorphic to the pullback of a family of ihs manifolds over a smooth complex space. For the family over the smooth space we then argue that the sheaf of relative $2$-differentials is cohomologically flat in dimension $0$; in particular the direct image sheaf will be compatible with the desired base change \cite[132--134]{BS76}. Using the same reasoning, first assuming $S$ smooth, we deduce that the relative Frölicher spectral sequence associated to $f \colon X \to S$ degenerates at $E_1$ \cite[251]{MHS}. Specifically we obtain a canonical injection of sheaves of $\Oka_S$-modules
\begin{equation}\label{pm sheaf map}
f_*\Omega^2_{X/S} \to \Oka_S \otimes_{\csh\ZZ{S}} \Rsh2f{\csh\ZZ{X}}
\end{equation}
whose cokernel is finite locally free.

Let $\Lambda$ be a lattice of rank $r$ and $\mu$ be a $\Lambda$-marking of $\cF$. Then $f_*\Omega^2_{X/S}$ becomes, by virtue of $\mu$, a subsheaf of $\Oka_S$-modules of $\Oka_S \otimes_{\csh\ZZ{S}} \csh\Lambda{S}$ whose cokernel is locally free of rank $r-1$. Thus we obtain---for example, using Grothendieck's theory of flag functors \cite[\S\S2--3]{SHC13-12}---a unique holomorphic map $\tilde h \colon S \to \PP{\Lambda_\CC}$ such that the pullback by $\tilde h$ of the tautological subsheaf of $\Oka_{\PP{\Lambda_\CC}}$-modules
\begin{equation*}
\Oka_{\PP{\Lambda_\CC}}(-1) \subset \Oka_{\PP{\Lambda_\CC}} \otimes_{\csh\ZZ{\PP{\Lambda_\CC}}} \csh\Lambda{\PP{\Lambda_\CC}}
\end{equation*}
yields precisely the image of $f_*\Omega^2_{X/S}$ inside $\Oka_S \otimes_{\csh\ZZ{S}} \csh\Lambda{S}$. We notice that for every point $s \in S$,
\begin{equation}\label{pm pointwise}
\tilde h(s) = (\id\CC \otimes_\ZZ \mu_s)\paren*{\Hdg20{X_s}}
\end{equation}
where $\Hdg20{X_s}$ denotes the canonical image of $\Hsh0{X_s}{\Omega^2_{X_s}}$ in $\CC \otimes_\ZZ \Hsh2{X_s}\ZZ$ and
\[
\id\CC \otimes_\ZZ \mu_s \colon \CC \otimes_\ZZ \Hsh2{X_s}\ZZ \to \CC \otimes_\ZZ \Lambda = \Lambda_\CC
\]
is the complexification of $\mu_s$. \Cref{pm pointwise} implies that the holomorphic map $\tilde h$ factorizes uniquely as $j \circ h$ where $j \colon \pdom\Lambda \inj \PP{\Lambda_\CC}$ denotes the canonical injection and $h \colon S \to \pdom\Lambda$ is a holomorphic map. We call $h$ the \emph{period map} of $(\cF,\mu)$.

\begin{rema}
The literature typically gives \cref{pm pointwise} as the definition of the period map of a marked family. While this is sufficient for families over reduced complex spaces, it is certainly not sufficient as a definition for families over arbitrary complex spaces. Hence in \cref{pm sec} we work with the sheaf of $\Oka_S$-modules $f_*\Omega^2_{X/S}$ rather than with the indexed family of complex vector spaces $\paren{\Hsh0{X_s}{\Omega^2_{X_s}}}_{s\in S}$.
\end{rema}

\begin{rema}\label{pm functorial}
Let $\Lambda$ be a lattice, $\phi = (g,h) \colon \cF' \to \cF$ be a morphism of $\Lambda$-marked families, and $p' \colon S' \to\pdom\Lambda$ and $p \colon S \to \pdom\Lambda$ be the period maps of $\cF'$ and $\cF$, respectively. Then, exploiting the commutativity of \cref{marked mor diagram} and noticing that the sheaf map of \cref{pm sheaf map} is compatible with the base change associated to the square in \cref{mor of families}, we deduce that $p' = p \circ h$. In other words, the period map is functorial.
\end{rema}

\subsection{}\label{deformation sec}
We let $\pt$ denote the complex space whose underlying set is $\set0$ and whose structure sheaf is given by the constant sheaf with value $\CC$.
Let $X$ be a compact complex manifold. Then $X$ can be viewed as a family of compact complex manifolds over $\pt$ by virtue of the unique (constant) holomorphic map $c \colon X \to \pt$. In this spirit a \emph{deformation} of $X$ is a pair $(\cX,\iota)$ where $\cX$ is a family of compact complex manifolds and $\iota \colon (X,\pt,c) \to \cX$ is a morphism of families.

A deformation $(\cX,\iota)$ of $X$ is called \emph{complete} when for every deformation $(\cX',\iota')$ of $X$, where $\iota'$ is a morphism over $j' \colon \pt \to D'$, there exists an open subspace $U \subset D'$ with $j'(0) \in U$ as well as a morphism of families $\phi \colon \cX'_U \to \cX$ such that $\iota = \phi \circ \iota'$.
A deformation $(\cX,\iota)$ of $X$ is called \emph{universal} (resp. \emph{semi-universal}) when it is complete and when for all deformations $(\cX'',\iota'')$ of $X$, where $\iota''$ is a morphism over $j'' \colon \pt \to D''$, and all morphisms of families $\phi_1,\phi_2 \colon \cX'' \to \cX$ over $h_1,h_2 \colon D'' \to D$ that satisfy
\[
\iota = \phi_1 \circ \iota'' \qquad \text{and} \qquad \iota = \phi_2 \circ \iota'',
\]
respectively, there exists an open neighborhood $V$ of $j''(0)$ in $D''$ with ${\rest{h_1}V = \rest{h_2}V}$ (resp. the Jacobian maps of $h_1$ and $h_2$ at the point $j''(0)$ coincide).

We say that $X$ is \emph{unobstructed} or has \emph{unobstructed deformations} when there exists a semi-universal deformation $(\cX,\iota)$ of $X$ such that the base space of the family $\cX$ is smooth.

\begin{rema}\label{local Torelli}
By virtue of its \emph{Kuranishi family} every compact complex manifold $X$ possesses a semi-universal deformation \cite[Theorem~2]{Kur62}.
Assume that $X$ is an ihs manifold.
Then according to Beauville and Bogomolov \cite[771--772]{Bea83} there exists a semi-universal deformation $(\cX,(i,j))$ of $X$ such that $\cX$ is a family of ihs manifolds over a simply connected complex manifold $S$. Moreover for every $\Lambda$-marking $\mu$ of $\cX$, the period map $S \to \pdom\Lambda$ of $(\cX,\mu)$ is a local biholomorphism at the point $j(0)$. The latter fact is usually called the \emph{local Torelli theorem for ihs manifolds} \cite[\nopp 1.15]{Huy99}.
\end{rema}

\subsection{}
Let $\cF$ be a family of compact complex manifolds over $S$ and $s \in S$ be a point. Then the canonical injection of the fiber defines a morphism of families $\iota \colon (\cF(s),\pt,c) \to \cF$ over the map $\pt \to S$ sending $0$ to $s$.
In that regard we say that the family $\cF$ is \emph{complete} (resp. \emph{semi-universal}, resp. \emph{universal}) at $s$ when $(\cF,\iota)$ is a complete (resp. semi-universal, resp. universal) deformation of $\cF(s)$.

\section{Families of ihs manifolds over smooth rational curves}\label{systematic sec}

\subsection{Overview}
In this section we focus our attention on families of ihs manifolds $\cF$ over smooth rational curves. First of all, we note that any such family possesses a marking $\mu$ as defined in \cref{ihs family sec} and admits an invariant, its degree, which turns out to be an integer $d\geq0$. In \cref{degree prop} we show that $d$ can be characterized completely in terms of the period map of $(\cF,\mu)$.

Second of all, we discuss examples of families of low degree. In \cref{isotrivial} we show that families of degree $0$ are trivial. Due to a restriction related to the geometry of the period domain, explained in \cref{no linear subspace}, families of degree $1$ do not exist. In \cref{closed embedding} we show that the period map of a marked family of degree $2$ is an embedding. Thus families of ihs manifolds of degree $2$---in particular, twistor families---are examples of families satisfying the assumptions of \cref{unobs thm}.

Last but not least, with \cref{deg constant,deg constant2} we study the behavior of our notions of degree under deformation. This becomes relevant in \cref{unobs sec}.

\subsection{}\label{line bundle deg}
A \emph{smooth rational curve} is a complex space biholomorphic to $\CP1$. Given a smooth rational curve $C$ and a locally free sheaf of $\Oka_C$-modules $\sL$ of rank $1$, we let $\deg_C(\sL)$ denote the \emph{degree} of $\sL$ on $C$. Note that the resulting map $\deg_C \colon \Pic C \to \ZZ$ is a group isomorphism. When $d$ is an integer, we write $\Oka_C(d)$ for an arbitrary locally free sheaf of $\Oka_C$-modules of rank $1$ whose degree is equal to $d$. 

\begin{defi}\label{family deg}
Let $\cF = (X,C,f)$ be a family of ihs manifolds over a smooth rational curve $C$. We know---compare \cref{pm sec}---that $f_*\Omega^2_{X/C}$ is a locally free sheaf of $\Oka_C$-modules of rank $1$. Thus it makes sense to define
\[
\deg\cF \coloneqq -\deg_C\paren*{f_*\Omega^2_{X/C}}.
\]
We call $\deg\cF$ the \emph{degree} of $\cF$, and we say that $\cF$ is a family (of ihs manifolds) of degree $d$ when $\deg\cF = d$.
\end{defi}

\begin{defi}\label{map deg}
Let $C$ be a smooth rational curve, $V$ a finite-dimensional complex vector space, and $g\colon C\to\PP V$ a holomorphic map to the projective space of lines. Then the \emph{degree} of $g$ is
\[\deg g\defeq\deg_C\paren*{g^*(\Oka_{\PP V}(1))}.\]

When $U$ is a not necessarily open or closed complex subspace of $\PP V$ and the reference to $\PP V$ is understood, we can view a given holomorphic map $h \colon C \to U$ as a holomorphic map $\tilde h \colon C \to \PP V$ by virtue of the canonical injection $j \colon U \inj \PP V$. In that spirit the degree of $h$ is $\deg h \defeq\deg \tilde h$. When, in addition, $C$ is a subspace of $U$, we apply this definition to the canonical injection $i \colon C \inj U$ and speak of a smooth rational curve of degree $\deg i$ in $U$.
\end{defi}

\begin{prop}\label{degree prop}
Let $(\cF,\mu)$ be a $\Lambda$-marked family of ihs manifolds over a smooth rational curve, $h$ be the associated period map. Then $\deg\cF = \deg h$.
\end{prop}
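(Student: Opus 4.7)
The plan is essentially to unwind the definitions from \cref{pm sec} and \cref{line bundle deg,map deg}, observing that the period map has been constructed precisely so as to recover $f_*\Omega^2_{X/C}$ as the pullback of the tautological subsheaf on projective space.

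Write $\cF = (X,C,f)$ and let $\tilde h = j\circ h \colon C \to \PP{\Lambda_\CC}$, where $j \colon \pdom\Lambda \inj \PP{\Lambda_\CC}$ is the canonical injection. By \cref{map deg} we have $\deg h = \deg \tilde h = \deg_C\paren*{\tilde h^*(\Oka_{\PP{\Lambda_\CC}}(1))}$. On the other hand, $\deg\cF = -\deg_C(f_*\Omega^2_{X/C})$ by \cref{family deg}. Thus the proposition reduces to establishing an isomorphism
\[
\tilde h^*\paren*{\Oka_{\PP{\Lambda_\CC}}(-1)} \isom f_*\Omega^2_{X/C}
\]
of invertible sheaves of $\Oka_C$-modules, after which taking degrees and using that $\Oka_{\PP{\Lambda_\CC}}(-1)$ is the dual of $\Oka_{\PP{\Lambda_\CC}}(1)$ yields the claim.

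The desired isomorphism is built into the very definition of $\tilde h$. Recall from \cref{pm sec} that the marking $\mu$ realizes $f_*\Omega^2_{X/C}$ as a subsheaf of $\Oka_C \otimes_{\csh\ZZ C}\csh\Lambda C$ of rank $1$ whose cokernel is locally free, and that $\tilde h$ is characterized, via Grothendieck's flag functors, as the unique holomorphic map $C \to \PP{\Lambda_\CC}$ whose pullback of the tautological sequence
\[
\Oka_{\PP{\Lambda_\CC}}(-1) \subset \Oka_{\PP{\Lambda_\CC}} \otimes_{\csh\ZZ{\PP{\Lambda_\CC}}} \csh\Lambda{\PP{\Lambda_\CC}}
\]
reproduces the image of $f_*\Omega^2_{X/C}$ inside $\Oka_C \otimes_{\csh\ZZ C} \csh\Lambda C$. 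Since the sheaf map of \cref{pm sheaf map} is injective and hence identifies $f_*\Omega^2_{X/C}$ with its image, this is precisely the isomorphism we need.

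Chaining everything together:
\[
\deg h = \deg_C\paren*{\tilde h^*\Oka_{\PP{\Lambda_\CC}}(1)} = -\deg_C\paren*{\tilde h^*\Oka_{\PP{\Lambda_\CC}}(-1)} = -\deg_C\paren*{f_*\Omega^2_{X/C}} = \deg\cF.
\]
There is no real obstacle; the argument is a definition chase. The one point worth double-checking is that the sheaf map in \cref{pm sheaf map} is indeed injective over the possibly non-reduced base $C$, but this was already addressed in \cref{pm sec} via the degeneration of the relative Frölicher spectral sequence and the cohomological flatness of $\Omega^2_{X/S}$ in dimension $0$.
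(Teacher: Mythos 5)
Your argument is correct and is essentially the paper's own proof: both identify $f_*\Omega^2_{X/C} \isom \tilde h^*\paren*{\Oka_{\PP{\Lambda_\CC}}(-1)}$ directly from the defining property of the period map in \cref{pm sec} and then compare degrees. Your extra remarks on the injectivity of the map in \cref{pm sheaf map} over a possibly non-reduced base are a fine, if unnecessary, elaboration of what the paper takes for granted.
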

\begin{proof}
Writing $\cF = (X,C,f)$ and letting $\tilde h$ denote the composition of $h$ and the canonical injection $\pdom\Lambda \to \PP{\Lambda_\CC}$, we know that $f_*\Omega^2_{X/C} \cong \tilde h^*\paren*{\Oka_{\PP{\Lambda_\CC}}(-1)}$ by the definition of the period map in \cref{pm sec}. Thus
\[
\deg\cF = -\deg_C\paren*{f_*\Omega^2_{X/C}} = \deg_C\paren*{{\tilde h}^*\paren*{\Oka_{\PP{\Lambda_\CC}}(1)}} = \deg{\tilde h} = \deg h. \qedhere
\]
\end{proof}

\begin{rema}\label{marking exists}
Let $\cF = (X,S,f)$ be a family of ihs manifolds over a nonempty, simply connected space $S$. Then there exist a lattice $\Lambda$ and a $\Lambda$-marking $\mu$ of $\cF$.

Indeed, pick a point $t \in S$. Then there exist a lattice $\Lambda$ and a lattice isomorphism $\nu \colon \Hsh2{X_t}\ZZ \to \Lambda$; for example, take $\Lambda$ equal to $\Hsh2{X_t}\ZZ$ and $\nu = \id\Lambda$. Since the holomorphic map $f\colon X \to S$ is a proper submersion, we know that the sheaf of abelian groups $\Rsh2f{\csh\ZZ X}$ is locally constant. Given that $S$ is simply connected, we infer that the latter sheaf is constant. Therefore we obtain a unique isomorphism of sheaves of abelian groups
\[
\mu \colon \Rsh2f{\csh\ZZ X} \to \csh\Lambda S
\]
for which $\mu_t = \nu$. Since the bilinear forms of the lattices $\Hsh2{X_s}\ZZ$ vary locally constantly\footnote{At this point it is important that in \cref{ihs sec} we have fixed a suitable convention on how to rescale the Beauville--Bogomolov form.} with $s$ in $S$, the map $\mu_s \colon \Hsh2{X_s}\ZZ \to \Lambda$ is a $\Lambda$-marking of $X_s$ for all points $s \in S$. Hence $\mu$ is a $\Lambda$-marking of $\cF$.
\end{rema}

\begin{coro}\label{deg nonneg}
When $\cF$ is a family of ihs manifolds over a smooth rational curve, then $\deg \cF \ge 0$.
\end{coro}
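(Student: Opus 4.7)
The plan is to reduce the assertion to a statement about the degree of a holomorphic map from $\CP1$ into a projective space and then invoke a standard positivity fact. First, since a smooth rational curve $C$ is nonempty and simply connected, \cref{marking exists} supplies a lattice $\Lambda$ together with a $\Lambda$-marking $\mu$ of $\cF$. This places us in the setting of \cref{degree prop}, which tells us that $\deg\cF = \deg h$, where $h\colon C \to \pdom\Lambda$ is the period map of $(\cF,\mu)$.

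Next, unwinding \cref{map deg}, we have $\deg h = \deg_C\paren*{\tilde h^*\Oka_{\PP{\Lambda_\CC}}(1)}$, where $\tilde h = j \circ h\colon C \to \PP{\Lambda_\CC}$ is the composition of $h$ with the canonical injection $j\colon \pdom\Lambda \inj \PP{\Lambda_\CC}$. Since $\Oka_{\PP{\Lambda_\CC}}(1)$ is globally generated on $\PP{\Lambda_\CC}$, its pullback $\tilde h^*\Oka_{\PP{\Lambda_\CC}}(1)$ is globally generated on $C$. A globally generated locally free sheaf of $\Oka_C$-modules of rank $1$ on the smooth rational curve $C \cong \CP1$ has nonnegative degree, as follows from the classification of line bundles on $\CP1$ (equivalently: the existence of a nonzero global section of $\Oka_C(d)$ forces $d\geq 0$). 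Hence $\deg h \ge 0$, and therefore $\deg\cF \ge 0$.

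An essentially equivalent route, avoiding the language of degrees of maps, is to observe directly from the construction in \cref{pm sec} that $f_*\Omega^2_{X/C}$ is isomorphic to $\tilde h^*\Oka_{\PP{\Lambda_\CC}}(-1)$, which realizes it as a sub-line-bundle of the trivial bundle $\Oka_C \otimes_{\csh\ZZ C} \csh{\Lambda_\CC}{C}$. Any line subbundle of a trivial bundle on $\CP1$ has nonpositive degree, so $\deg_C(f_*\Omega^2_{X/C}) \le 0$, whence $\deg\cF = -\deg_C(f_*\Omega^2_{X/C}) \ge 0$. There is no real obstacle here; the only thing to double-check is that \cref{marking exists} genuinely applies to $C$, but this is immediate since $\CP1$ is simply connected.
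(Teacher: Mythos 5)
Your proposal is correct and follows the paper's own argument exactly: invoke \cref{marking exists} to get a marking, \cref{degree prop} to identify $\deg\cF$ with $\deg h$, and then the nonnegativity of the degree of a holomorphic map to $\PP{\Lambda_\CC}$ (which you justify via global generation of $\Oka_{\PP{\Lambda_\CC}}(1)$, a fact the paper simply cites). The alternative route via sub-line-bundles of a trivial bundle is a fine equivalent reformulation but not a genuinely different argument.
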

\begin{proof}
Use \cref{marking exists}, \cref{degree prop}, and the fact that the degree of a holomorphic map in the sense of \cref{map deg} is always nonnegative.
\end{proof}

\begin{prop}\label{isotrivial}
Let $\cF = (X,C,f)$ be a family of ihs manifolds over a smooth rational curve. Then $\deg\cF = 0$ if and only if the family $\cF$ is trivial.
\end{prop}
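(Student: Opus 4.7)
The plan is to dispose of the easy direction first: if $\cF$ is trivial, say $\cF \isom (C \x F, C, \pr1)$ for an ihs manifold $F$, then the projection formula yields $f_*\Omega^2_{X/C} \isom \Oka_C \otimes_\CC \Hsh0F{\Omega^2_F}$, which is a trivial line bundle on $C$, so $\deg \cF = 0$.

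For the converse, suppose $\deg \cF = 0$. By \cref{marking exists} I choose a lattice $\Lambda$ and a $\Lambda$-marking $\mu$ of $\cF$, and let $h\colon C \to \pdom\Lambda$ denote the associated period map. By \cref{degree prop} we have $\deg h = 0$, hence the pullback of $\Oka_{\PP{\Lambda_\CC}}(1)$ along the composition $C \to \pdom\Lambda \inj \PP{\Lambda_\CC}$ is a trivial line bundle on $\CP1$. The coordinate sections defining such a map are then scalars, so $h$ must be a constant map.

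Next I use local Torelli to upgrade constancy of periods to local triviality of $\cF$. Fix $t_0 \in C$, set $F \defeq \cF(t_0)$, and by \cref{local Torelli} choose a semi-universal deformation $(\cK,\iota_0)$ of $F$ over a smooth simply connected base $S$. Using \cref{marking exists} I equip $\cK$ with a $\Lambda$-marking $\nu$ whose restriction to the fiber $F$ equals $\mu_{t_0}$; then the period map $p\colon S \to \pdom\Lambda$ of $(\cK,\nu)$ is a local biholomorphism at the base point $s_0$. Since $\cK$ is complete and $\cF$ restricts to a deformation of $F$ at $t_0$, there exist a simply connected open neighborhood $U$ of $t_0$ in $C$ and a morphism of families $\phi\colon \cF_U \to \cK$ over some $\pi\colon U \to S$ with $\pi(t_0) = s_0$. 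The marking induced on $\cF_U$ from $\phi$ and $\nu$ agrees with the restriction of $\mu$ on the fiber over $t_0$, and hence on all of $U$, by the uniqueness of markings with a prescribed fiberwise value on a simply connected base. Therefore \cref{pm functorial} yields $\rest{h}{U} = p \circ \pi$; combined with the constancy of $h$ and the injectivity of $p$ near $s_0$, this forces $\pi$ to be constant on a smaller neighborhood $U' \ni t_0$. Consequently $\cF_{U'}$ is the pullback of $\cK$ by a constant map, and hence $\cF_{U'} \isom U' \x F$ as families of ihs manifolds.

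To globalize, the previous paragraph shows that $\cF$ is a locally trivial holomorphic fiber bundle over $C$ with fiber $F$ and structure group $\Aut(F)$. Because $F$ is an ihs manifold, the holomorphic symplectic form gives $\Theta_F \isom \Omega^1_F$, while simple connectedness together with the Hodge decomposition yields $\Hsh0F{\Omega^1_F}=0$; hence $\Hsh0F{\Theta_F}=0$ and $\Aut(F)$ is a discrete complex Lie group. A holomorphic fiber bundle over the simply connected curve $C \isom \CP1$ with discrete structure group has locally constant, and thus globally constant, transition cocycle, so $\cF$ is globally trivial. The main obstacle I expect is the bookkeeping in the semi-universality step: I need $\phi$ to respect the $\Lambda$-markings so that \cref{pm functorial} applies, and the identification of the two markings on $\cF_U$ rests delicately on the simple connectedness of $U$ together with the uniqueness built into \cref{marking exists}.
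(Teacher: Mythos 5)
Your proposal is correct and follows essentially the same route as the paper: the trivial direction via $f_*\Omega^2_{X/C}\isom\Oka_C$, then degree zero forces a constant period map, local Torelli and completeness of the Kuranishi family give local triviality, and discreteness of $\Aut(F)$ (from $\Hsh0F{\Theta_F}=0$) together with simple connectedness of $C$ gives global triviality. The only imprecision is the phrase \enquote{locally constant, and thus globally constant, transition cocycle}: locally constant transition functions on possibly disconnected overlaps need not be constant, and the correct justification --- which is exactly the paper's --- is that $\Hch1C{\csh{\Aut(F)}C}$ is trivial because $C$ is simply connected and locally pathwise connected, so the locally constant cocycle is a coboundary.
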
  
\begin{proof}
First of all, if $\cF$ is trivial, then $f_*\Omega^2_{X/C} \cong \Oka_C$ and whence $\cF$ is of degree $0$. Conversely now, assume that $\deg\cF=0$. By \cref{degree prop} we know that for every $\Lambda$-marking $\mu$ of $\cF$, the period map of $(\cF,\mu)$ is of degree $0$, whence constant. By virtue of \cref{marking exists} every $\Lambda$-marking of a particular fiber of $\cF$ extends uniquely to a $\Lambda$-marking of $\cF$. Thus for every open subspace $U\subset C$ and every $\Lambda$-marking $\nu$ of $\cF_U$ we see that the period map of $(\cF_U,\nu)$ is locally constant.

Fix a point $s \in C$. According to \cref{local Torelli} there exist a semi-universal deformation $(\cX,\iota)$ of $X_s$ as well as a $\Lambda$-marking $\tilde\nu$ of the family of ihs manifolds $\cX$ such that the period map of $(\cX,\tilde\nu)$ is an open embedding $p \colon S \to \pdom\Lambda$. Since the deformation $(\cX,\iota)$ is complete, there is a connected open neighborhood $U$ of $s$ in $C$ and a morphism of families $\phi\colon\cF_U\to\cX$ over a holomorphic map $h\colon U\to S$. We let $\nu$ denote the unique $\Lambda$-marking of $\cF_U$ for which $\phi$ becomes a morphism of $\Lambda$-marked families; see \cref{induced marking}. Then by \cref{pm functorial} the composition $p \circ h$ is the period map of $(\cF_U,\nu)$, which we know to be constant. We conclude that the holomorphic map $h$ is constant, too, so that the family $\cF_U$ is trivial by virtue of $\phi$.

As $s \in C$ was arbitrary, we have shown that the family $\cF$ is locally trivial. Since the space $C$ is connected, this implies that the family $\cF$ is isotrivial in the sense that every two fibers of $\cF$ are isomorphic. Therefore there exists an ihs manifold $Y$---for example, take an arbitrary fiber of $\cF$---together with an indexed open cover $\mathfrak{U} = (U_i)_{i\in I}$ of $C$ and an indexed family $(\zeta_i)_{i\in I}$ of $U_i$-isomorphisms of families $\zeta_i \colon \cF_{U_i} \to \cY_{U_i}$ where $\cY$ denotes the trivial family of compact complex manifolds $(Y\times C,C,\pr2)$. For all $i,j \in I$ define $U_{ij} \coloneqq U_i\cap U_j$ and
\[
\psi_{ij} \coloneqq (\zeta_i)_{U_{ij}} \circ (\zeta_j^{-1})_{U_{ij}} \colon \cY_{U_{ij}} \to \cY_{U_{ij}}.
\]
Notice that the group $\Hsh0Y{\Theta_Y}$ is trivial for $Y$ is an ihs manifold. Thus the Lie group $A \coloneqq \Aut(Y)$ of holomorphic automorphisms of $Y$ is discrete and we may regard $\psi_{ij}$ as a locally constant map $U_{ij} \to A$. As such $(\psi_{ij})_{i,j \in I}$ is a Čech $1$-cocycle of the constant sheaf of groups $\csh AC$ on $\mathfrak U$.

Since the space $C$ is simply connected and locally pathwise connected, the first Čech cohomology of $\csh AC$ on $\mathfrak U$ is trivial \cite[7.5, 7.13, and 7.14]{Wed16}. Hence there exists an indexed family $(\omega_i)_{i\in I}$ of sections $\omega_i\in\csh AC(U_i)$ so that $\omega_i\psi_{ij} = \omega_j$ on $U_{ij}$ for all $i,j \in I$. Interpreting $\omega_k$ as a $U_k$-automorphism of the family $\cY_{U_k}$, we infer that
\[
\paren*{\omega_i \circ \zeta_i}_{U_{ij}} = \paren*{\omega_j \circ \zeta_j}_{U_{ij}}
\]
for all $i,j \in I$. As a result there exists a $C$-isomorphism of families $\cF \to \cY$.
\end{proof}

\begin{lemm}\label{no linear subspace}
Let $\Lambda$ be a lattice of rank $r\ge3$ and signature $(3,r-3)$. Then the period domain $\pdom\Lambda$ contains no projective subspace of $\PP{\Lambda_{\CC}}$ of positive dimension.
\end{lemm}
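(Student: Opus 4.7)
The plan is to argue by contradiction: suppose $\PP V \subset \pdom\Lambda$ is a projective subspace with $\dim_\CC V \ge 2$, and derive a contradiction from the signature hypothesis on $\Lambda$. First I would observe that since $\PP V$ lies on the smooth quadric cut out by $xx = 0$, polarizing this quadratic equation yields $xy = 0$ for all $x, y \in V$, so $V$ is \emph{totally isotropic} for the $\CC$-bilinear form. Second, the Hermitian form $h(x,y) \defeq x\conj y$ on $\Lambda_\CC$ inherits signature $(3, r-3)$ from the real form on $\Lambda_\RR$; the inequality $x\conj x > 0$ built into the definition of $\pdom\Lambda$ thus says exactly that $h$ is \emph{positive definite} on $V \setminus \set0$.

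Next I would check that $V \cap \conj V = 0$: this intersection is stable under conjugation, hence is the complexification of a real subspace $U_\RR \subset \Lambda_\RR$, but any $y \in U_\RR$ satisfies $h(y,y) = y\conj y = yy = 0$ by total isotropy of $V$, forcing $U_\RR = 0$ by positive definiteness. Consequently $V + \conj V$ is the complexification of a real subspace $W_\RR \subset \Lambda_\RR$ of real dimension $2\dim_\CC V$.

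The core step would then be to exhibit in $W_\RR$ a positive definite subspace of dimension $2\dim_\CC V \ge 4$, contradicting the fact that under signature $(3, r-3)$ every positive definite real subspace has dimension at most $3$. Choose an $h$-orthonormal basis $v_1, \dots, v_n$ of $V$ and set
\[
u_j \defeq v_j + \conj v_j, \qquad w_j \defeq \ii(v_j - \conj v_j) \qquad (j = 1, \dots, n).
\]
These lie in $W_\RR$; using total isotropy (which annihilates $v_i v_j$ and $\conj v_i \conj v_j$) together with $v_i \conj v_j = h(v_i, v_j) = \delta_{ij}$, a short calculation shows the $u_j$ and $w_j$ are pairwise orthogonal in $\Lambda_\RR$ with each squared norm equal to $2$. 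Since their number equals $\dim_\RR W_\RR$, they form an orthogonal basis of $W_\RR$, exhibiting $W_\RR$ itself as a positive definite subspace of $\Lambda_\RR$ of dimension $2n \ge 4$, contradiction.

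The main obstacle is really just the bookkeeping in this final orthogonality computation, which requires combining $\CC$-bilinear total isotropy on $V$ with $h$-orthonormality on $V$ in the right way; everything else (polarization, conjugation stability, extracting $W_\RR$) amounts to routine linear algebra over $\RR$ and $\CC$.
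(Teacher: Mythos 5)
Your proposal is correct and takes essentially the same approach as the paper: polarization makes $V$ totally isotropic, the condition $x\conj x>0$ makes the Hermitian form positive definite on $V$, and then $V$ together with $\conj V$ produces too many positive directions for the signature $(3,r-3)$. The only cosmetic difference is that the paper (after reducing to a projective line) exhibits the four $H$-orthogonal, $H$-positive vectors $v,w,\conj v,\conj w$ directly in $\Lambda_\CC$ and contradicts the positive index $3$ of the Hermitian form, whereas you pass to real and imaginary parts and contradict the real signature on $\Lambda_\RR$ --- equivalent bookkeeping.
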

\begin{proof}
It is clearly enough to show that $\pdom\Lambda$ contains no projective line. We assume, to the contrary, that there exists a $2$-dimensional complex linear subspace $V \subset \Lambda_{\CC}$ such that $\PP V\subset\pdom\Lambda$. By the definition of the period domain, $x^2 = 0$ and $x{\conj x} > 0$ for all $x \in V \setminus \set0$. The second condition implies that there exists an orthogonal ordered basis $(v,w)$ of $V$ with respect to the Hermitian product $H(x,y)\coloneqq x\conj y$, which is defined on $\Lambda_{\CC}$. The first condition then implies that the quadruple $(v,w,\conj v,\conj w)$ is orthogonal with respect to $H$. Moreover, the entries of this quadruple are strictly positive for $H$, which, however, contradicts the fact that the positive index of inertia of $H$ on $\Lambda_\CC$ is $3$.
\end{proof}

\begin{rema}
Gordon Heier \cite{Heier} has obtained \cref{no linear subspace} for the K3 lattice $\Lambda$ by means of a different argument.
\end{rema}

\begin{coro}\label{deg ne1}
Let $\cF$ be a family of ihs manifolds over a smooth rational curve, then $\deg\cF \ne 1$.
\end{coro}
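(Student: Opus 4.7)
The plan is to derive a contradiction from $\deg\cF=1$ by combining \cref{degree prop} with \cref{no linear subspace}. Write $\cF = (X,C,f)$ and pick any point $t \in C$. Since $C \cong \CP1$ is nonempty and simply connected, \cref{marking exists} furnishes a lattice $\Lambda$ and a $\Lambda$-marking $\mu$ of $\cF$; moreover $\Lambda \cong \Hsh2{X_t}\ZZ$, which by standard facts about ihs manifolds has rank $r \ge 3$ and signature $(3,r-3)$, so that \cref{no linear subspace} is available.

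Assume, for contradiction, that $\deg\cF=1$. By \cref{degree prop} the period map $h \colon C \to \pdom\Lambda$ then satisfies $\deg h = 1$. Let $\tilde h \colon C \to \PP{\Lambda_\CC}$ be the composition of $h$ with the canonical injection $\pdom\Lambda \inj \PP{\Lambda_\CC}$; by \cref{map deg} this means $\tilde h^*(\Oka_{\PP{\Lambda_\CC}}(1)) \cong \Oka_C(1)$.

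The next step is to observe that $\tilde h$ factors as a linear embedding onto a projective line. Indeed, $\tilde h$ is given by a linear subsystem of the complete linear system $\abs{\Oka_C(1)}$, and since $\hdim0C{\Oka_C(1)}=2$, the image $\tilde h(C)$ is contained in a projective line $L \subset \PP{\Lambda_\CC}$. As a holomorphic map $\tilde h\colon\CP1\to L\cong\CP1$ of degree $1$, $\tilde h$ is surjective, hence $\tilde h(C)=L$.

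Since the period map lands in the period domain, we conclude $L = \tilde h(C) = h(C) \subset \pdom\Lambda$. This contradicts \cref{no linear subspace}, which asserts that $\pdom\Lambda$ contains no projective subspace of $\PP{\Lambda_\CC}$ of positive dimension. Hence $\deg\cF\ne 1$. The only slightly delicate point is the factorization argument in the third paragraph; this is a routine fact about maps from $\CP1$ to projective space, but it is what makes the lemma do the work.
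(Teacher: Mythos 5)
Your proposal is correct and follows essentially the same route as the paper: apply \cref{marking exists} and \cref{degree prop} to reduce to $\deg h=1$, and then conclude that $h(C)$ would be a projective line inside $\pdom\Lambda$, contradicting \cref{no linear subspace}. The only difference is that you spell out the (correct) linear-series argument showing a degree-one map from $\CP1$ has a projective line as image, a detail the paper leaves implicit.
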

\begin{proof}
By \cref{marking exists} there exists a marking $\mu$ of $\cF$. Let $h$ denote the associated period map. Then by \cref{degree prop}, $\deg\cF=1$ if and only if $\deg h=1$. The latter condition would imply that $h(C)$ is a projective line in $\PP{\Lambda_\CC}$, which is impossible by \cref{no linear subspace}.
\end{proof}

\begin{prop}\label{upstairs-downstairs}
Let $\cF=(X,C,f)$ be a family of ihs manifolds over a smooth rational curve and $d\in\ZZ$. Then $\deg\cF=d$ if and only if there exists a global section $\sigma$ in the sheaf
\[\Omega^2_{X/C}(d) \coloneqq \Omega^2_{X/C} \otimes_{\Oka_X} f^*\Oka_C(d)\]
that defines a holomorphic symplectic structure on $X_t$ for every $t \in C$.
\end{prop}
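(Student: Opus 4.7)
The plan is to reduce the statement, via the projection formula, to a computation about global sections of a line bundle on $\CP1$.

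First I would apply the projection formula to rewrite
\[
f_*\bigl(\Omega^2_{X/C}(d)\bigr) \cong f_*\Omega^2_{X/C} \otimes_{\Oka_C} \Oka_C(d).
\]
By \cref{pm sec}, the sheaf $f_*\Omega^2_{X/C}$ is a locally free $\Oka_C$-module of rank $1$, so by the definition of $\deg\cF$ (\cref{family deg}), $f_*\Omega^2_{X/C} \cong \Oka_C(-\deg\cF)$. Hence the right-hand side is isomorphic to $\Oka_C(d-\deg\cF)$, and global sections of $\Omega^2_{X/C}(d)$ on $X$ correspond bijectively to global sections of this line bundle on $C$.

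Next I would use the base change property of $f_*\Omega^2_{X/C}$, which is part of \cref{pm sec}: for every $t\in C$ the canonical map identifies the fiber $\bigl(f_*\Omega^2_{X/C}\bigr)(t)$ with $\Hsh0{X_t}{\Omega^2_{X_t}}$. Composed with the isomorphism above, this shows that the restriction of a section $\sigma\in\Hsh0X{\Omega^2_{X/C}(d)}$ to $X_t$ is computed by the evaluation at $t$ of the corresponding section of $\Oka_C(d-\deg\cF)$.

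With these preparations the two directions are short. For the ``only if'' direction, if $d=\deg\cF$ then $\Oka_C(d-\deg\cF)\cong\Oka_C$ admits a nowhere-vanishing global section; the corresponding $\sigma\in\Hsh0X{\Omega^2_{X/C}(d)}$ restricts to a nonzero element of $\Hsh0{X_t}{\Omega^2_{X_t}}$ for every $t\in C$. Since $X_t$ is an ihs manifold, $\Hsh0{X_t}{\Omega^2_{X_t}}$ is $1$-dimensional and is spanned by the holomorphic symplectic form, so any nonzero element is itself a symplectic form. For the ``if'' direction, a $\sigma$ whose restriction to each $X_t$ is symplectic (in particular nonzero) corresponds to a nowhere-vanishing section of the line bundle $\Oka_C(d-\deg\cF)$ on $\CP1$; such a line bundle must be trivial, which forces $d-\deg\cF=0$.

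The only nontrivial input is the base-change compatibility of $f_*\Omega^2_{X/C}$, but this has already been installed in \cref{pm sec}, so I expect no real obstacle—the proof is essentially a bookkeeping exercise about line bundles on $\CP1$ combined with the fact that $\hdim0{X_t}{\Omega^2_{X_t}}=1$.
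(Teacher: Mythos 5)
Your proposal is correct and follows essentially the same route as the paper: projection formula to identify $f_*\bigl(\Omega^2_{X/C}(d)\bigr)$ with $f_*\Omega^2_{X/C}\otimes\Oka_C(d)$, Grauert base change to relate nonvanishing of the pushed-forward section at $t$ with nonvanishing of $\sigma_t$ in $\Hsh0{X_t}{\Omega^2_{X_t}}$, and the one-dimensionality of $\Hsh0{X_t}{\Omega^2_{X_t}}$ to conclude that nonzero means symplectic. The only cosmetic difference is that you phrase the degree condition via $\Oka_C(d-\deg\cF)$ being trivial, whereas the paper states directly that $\deg\cF=d$ iff $f_*\Omega^2_{X/C}\otimes\Oka_C(d)\cong\Oka_C$ iff it has a nowhere vanishing global section.
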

\begin{proof}
By \cref{family deg,line bundle deg}, $\deg\cF=d$ if and only if the sheaf of $\Oka_C$-modules $E \coloneqq f_*\Omega^2_{X/C} \otimes_{\Oka_C} \Oka_{C}(d)$ is isomorphic to $\Oka_C$, which is the case if and only if there exists a global nowhere vanishing section in $E$.

By the projection formula the canonical morphism of sheaves of $\Oka_C$-modules $\pi \colon E \to f_*\paren{\Omega^2_{X/C}(d)}$ is an isomorphism. Let $\tau$ be an arbitrary global section in $E$ now, put $\sigma \coloneqq \pi_C(\tau)$, and fix a point $t \in C$. Then $\sigma$ is a global section of $\Omega^2_{X/C}(d)$ which defines a global section $\sigma_t$ of $\Omega^2_{X_t}$. By Grauert's base change theorem we know that $\sigma_t\ne0$ in $\Hsh0{X_t}{\Omega^2_{X_t}}$ if and only if $\tau(t)\ne0$ in $E(t)$. Futhermore, since $X_t$ is an ihs manifold, $\sigma_t\ne0$ if and only if $\sigma_t$ is a holomorphic symplectic structure on $X_t$. Hence the desired equivalence follows.
\end{proof}

\begin{exam}[Twistor families]\label{twistor example}
Let $(M,g,I,J,K)$ be a hyperkähler manifold. Then the twistor construction \cite[554--557]{HKLR} produces a complex manifold $Z$, the \emph{twistor space}, together with a differentiably trivial holomorphic submersion $p \colon Z\to \CP1$. We know there exists a global section $\sigma$ in the sheaf $\Omega^2_{Z/\CP1}(2)$ so that $\sigma$ defines a holomorphic symplectic structure $\sigma_t$ on $Z_t = p^{-1}(t)$ for every point $t \in \CP1$. Therefore, according to \cref{upstairs-downstairs}, when $(M,I)$ is an ihs manifold---or rather, is the almost complex manifold associated to an ihs manifold---the triple $(Z,\CP1,p)$ is a family of ihs manifolds of degree $2$. We call the latter a \emph{twistor family}.
\end{exam}

\begin{prop}\label{closed embedding}
Let $(\cF,\mu)$ be a $\Lambda$-marked family of ihs manifolds over a smooth rational curve $C$ such that $\deg\cF = 2$. Then the period map $h\colon C\to\pdom\Lambda$ of $(\cF,\mu)$ is a closed embedding.
\end{prop}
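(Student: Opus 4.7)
The plan is to use \cref{degree prop} to translate the assumption $\deg \cF = 2$ into $\deg h = 2$, to apply the classical dichotomy for degree $2$ morphisms from $\CP1$ to projective space, and to exclude the degenerate branch of that dichotomy via \cref{no linear subspace}.

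Concretely, \cref{degree prop} gives $\deg h = 2$, so that the composition $\tilde h \colon C \to \PP{\Lambda_\CC}$ of $h$ with the canonical injection $\pdom\Lambda \inj \PP{\Lambda_\CC}$ satisfies $\tilde h^* \Oka_{\PP{\Lambda_\CC}}(1) \cong \Oka_C(2)$. A non-constant degree $2$ holomorphic map $C \to \PP V$ is governed by the three-dimensional space $\Hsh0C{\Oka_C(2)}$: according to whether the pullback map $\Lambda_\CC^\vee \to \Hsh0C{\Oka_C(2)}$ has image of dimension $2$ or $3$, the map $\tilde h$ either factors as a $2$-to-$1$ branched cover onto a projective line $\ell \subset \PP{\Lambda_\CC}$, or factors as the degree $2$ Veronese embedding $C \inj \CP2$ followed by a linear embedding $\CP2 \inj \PP{\Lambda_\CC}$. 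In the second case $\tilde h$ is already a closed embedding onto a smooth conic, and then $h$ itself is a closed embedding into $\pdom\Lambda$: it is a biholomorphism onto $h(C)$, and the image is closed in $\pdom\Lambda$ since $h(C)$ is compact and $\pdom\Lambda$ is Hausdorff.

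The first case, where $\tilde h$ factors through a line $\ell \subset \PP{\Lambda_\CC}$, is to be excluded using \cref{no linear subspace}. The image $\tilde h(C)$ is a compact irreducible $1$-dimensional analytic subset of $\ell$, hence equals all of $\ell$; this places the entire projective line $\ell$ inside $\pdom\Lambda$, contradicting \cref{no linear subspace}. The main obstacle, modest as it is, lies precisely in this last observation---in reading off from the factorization through $\ell$ the conclusion that $\ell$ itself, and not merely its intersection with $\pdom\Lambda$, sits inside the period domain. Everything else is formal manipulation of the definition of degree.
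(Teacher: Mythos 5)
Your proposal is correct and follows essentially the same route as the paper: both translate $\deg\cF=2$ into $\deg h=2$ via \cref{degree prop}, analyze the (base-point-free) linear series defining $\tilde h$, rule out the two-dimensional (pencil) case because the image would be a projective line inside $\pdom\Lambda$, contradicting \cref{no linear subspace}, and conclude that $\tilde h$ is given by the complete series $\abs{\Oka_C(2)}$, hence a closed embedding. Your extra remarks (surjectivity onto the line in the excluded case, compactness of $h(C)$ giving closedness in $\pdom\Lambda$) merely spell out details the paper leaves implicit.
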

\begin{proof}
We let $\tilde h$ denote the composition of $h$ and the canonical injection $\pdom{\Lambda}\to\PP{\Lambda_{\CC}}$. By \cref{degree prop}, $\tilde h \colon C \to \PP{\Lambda_\CC}$ is a holomorphic map of degree $2$. Thus $\tilde h$ is given by a linear series $\abs V$ for a nonzero complex linear subspace $V \subset \Hsh0C{\Oka_C(2)}$. The vector space $V$ cannot be of dimension $1$, for if it were, the map $\tilde h$ would be constant and whence of degree $0$. If $V$ were of dimension $2$, the set-theoretic image of $\tilde h$ would be a $1$-dimensional linear subspace of $\PP{\Lambda_\CC}$, which is impossible by \cref{no linear subspace}. Therefore $\tilde h$ is given by the complete linear series $\abs{\Oka_C(2)}$, which implies that $\tilde h$ and whence $h$ are closed embeddings.
\end{proof}

\begin{rema}
If $\cF$ is a family of ihs manifolds of degree $d'$ over $\CP1$ and $g\colon\CP1\to\CP1$ is a branched covering of degree $d$, then $g^*(\cF)$ is a family of degree $d'd$. Taking for $\cF$ a twistor family as in \cref{twistor example}, this shows that there are families of ihs manifolds over $\CP1$ of every even degree $2d>0$. We do not know whether families of odd degree occur.
\end{rema}

\begin{lemm}
\label{deg constant}
Let $(W,S,p)$ be a family of smooth rational curves---that is, a family of compact complex manifolds whose every fiber is a smooth rational curve. Let $\sL$ be a locally free sheaf of $\Oka_W$-modules of rank $1$ and write $i_s \colon W_s \to W$ for the canonical injection when $s \in S$. Then the function $d\colon S\to \ZZ$ given by $d(s) = \deg_{W_s}(i_s^*(\sL))$ is locally constant on $S$.
\end{lemm}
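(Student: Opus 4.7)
The strategy is to recognise $d(s)$ as a topological invariant of the pair $(\sL,W_s)$ that manifestly varies locally constantly in $s$; namely, $d(s) = \langle c_1(i_s^*\sL),[W_s]\rangle$, where the pairing uses the fundamental class coming from the complex orientation of $W_s \cong \CP1$.

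First, because $p \colon W \to S$ is a proper holomorphic submersion, Ehresmann's fibration theorem provides, locally on $S$, a differentiable trivialisation of $p$ with fibre diffeomorphic to $\sphere 2$. Consequently, the higher direct image $\Rsh 2p{\csh\ZZ W}$ is locally constant on $S$, with stalk at $s$ canonically isomorphic to $\Hsh 2{W_s}\ZZ$; via the fundamental class this stalk is further identified with $\ZZ$. Second, the Leray edge morphism sends the first Chern class $c_1(\sL) \in \Hsh 2W\ZZ$ to a global section $\alpha \in \Hsh 0S{\Rsh 2p{\csh\ZZ W}}$ whose stalk at $s$ is $\alpha_s = i_s^* c_1(\sL) = c_1(i_s^*\sL)$. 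Third, a global section of a locally constant sheaf is itself locally constant on stalks; under the identifications of the previous step, the function $s \mapsto \langle \alpha_s,[W_s]\rangle = d(s)$ is therefore locally constant on $S$, as required.

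The argument has no serious obstacle; the only subtlety is ensuring that the identifications $\Hsh 2{W_s}\ZZ \cong \ZZ$ chosen for different $s$ are compatible under the local trivialisations of $\Rsh 2p{\csh\ZZ W}$. This compatibility is automatic because the fundamental classes come from the complex orientations, which are inherited from the complex structure on $W$ and therefore vary continuously. A slicker alternative would be to exploit that $\sL$ is $p$-flat, since $\sL$ is locally free and $p$ is a submersion, and to invoke the local constancy of Euler characteristics for proper flat families of coherent sheaves: then $\chi(W_s; i_s^*\sL) = d(s) + 1$ by Riemann--Roch on $\CP1$, and the claim follows at once. I would favour the first, topological route, since it makes minimal use of analytic machinery.
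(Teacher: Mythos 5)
Your proof is correct, but your preferred (topological) route differs from the paper's argument, while your ``slicker alternative'' at the end is in fact exactly what the paper does: the paper observes $d(s)=\chi(W_s;i_s^*(\sL))-1$ by Riemann--Roch on $\CP1$ and then quotes the invariance of the Euler--Poincaré characteristic for the $p$-flat coherent sheaf $\sL$ in a proper family---a two-line proof, at the cost of invoking Grauert-type coherence machinery. Your main argument instead identifies $d(s)$ with $\langle c_1(i_s^*\sL),[W_s]\rangle$, pushes $c_1(\sL)$ through the Leray edge map into $\Hsh0S{\Rsh2p{\csh\ZZ W}}$, and uses that this sheaf is locally constant; this buys a purely topological proof with no coherent-sheaf input, and it is consistent with facts the paper uses anyway (local constancy of $\Rsh2f{\csh\ZZ X}$ for proper submersions, cf. the remark on the existence of markings). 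Two small points of care: Ehresmann's theorem as usually stated requires a smooth base, whereas $S$ here may be singular or non-reduced, so you should appeal to local topological triviality of proper holomorphic submersions over arbitrary complex spaces (or directly to the local constancy of $\Rsh2p{\csh\ZZ W}$), noting that the topological statement only sees the reduction; and the orientation-compatibility issue you flag is genuine but handled correctly by the continuity of the complex orientations, or can be sidestepped entirely by working with the trace/integration-over-the-fiber map. Both proofs are sound; the paper's is shorter, yours is more elementary in its inputs.
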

\begin{proof}
By the Riemann--Roch theorem we know that $d(s) = \chi(W_s;i_s^*(\sL)) - 1$ for all $s \in S$. Thus our claim follows from the well-known invariance of the Euler--Poincaré characteristic \cites{EinTheorem,Rie70}.
\end{proof}

\begin{coro}
\label{deg constant2}
We proceed with the notation of \cref{deg constant}.
\begin{enumtheo}
\item\label{deform base} When $V$ is a finite dimensional complex vector space and $g\from W\to\PP V$ is a holomorphic map, then the degree of the maps $g \circ i_s$ is locally constant in $s\in S$.
\item\label{deform family} Let $\cF=(X,W,f)$ be a family of ihs manifolds. Then the degree of the induced families $\cF_{W_s}$ is locally constant in $s\in S$.
\end{enumtheo}
\end{coro}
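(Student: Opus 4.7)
The plan is to deduce both items from \cref{deg constant} by choosing a suitable line bundle on $W$ in each case, so that the degrees in question become instances of the function $d$ in \cref{deg constant}.

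For \cref{deform base} I will take $\sL \coloneqq g^*(\Oka_{\PP V}(1))$, which is a locally free sheaf of $\Oka_W$-modules of rank $1$. By functoriality of pullback one has a canonical isomorphism $i_s^*(\sL) \cong (g \circ i_s)^*(\Oka_{\PP V}(1))$ for every $s\in S$, so the definition of the degree of a holomorphic map into projective space in \cref{map deg} gives
\[
\deg(g \circ i_s) = \deg_{W_s}\bigl((g \circ i_s)^*(\Oka_{\PP V}(1))\bigr) = \deg_{W_s}(i_s^*\sL).
\]
Local constancy in $s$ is then exactly the statement of \cref{deg constant}; no further work is needed.

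For \cref{deform family}, writing $\cF_{W_s} = (X_{W_s}, W_s, f_s)$ with $f_s$ the restriction of $f$, the natural candidate is $\sL \coloneqq f_*\Omega^2_{X/W}$, which is locally free of rank $1$ on $W$ by the discussion in \cref{pm sec}. The main step, and the one place where nontrivial content enters, is to identify $i_s^*(\sL)$ with $(f_s)_*\Omega^2_{X_{W_s}/W_s}$, i.e., to verify that formation of $f_*\Omega^2_{X/W}$ commutes with base change along the fiber inclusion $i_s$. This is precisely the base change property already established in \cref{pm sec}, which rests on the cohomological flatness in dimension $0$ of $\Omega^2_{X/W}$ over $W$ (and ultimately on the unobstructedness of ihs manifolds together with Grauert's theorem). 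Once this identification is in hand, \cref{family deg} yields
\[
\deg\cF_{W_s} = -\deg_{W_s}\bigl((f_s)_*\Omega^2_{X_{W_s}/W_s}\bigr) = -\deg_{W_s}(i_s^*\sL),
\]
and local constancy follows from \cref{deg constant}.

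The only real obstacle is the base change identification in part \cref{deform family}; it is not entirely formal, but it has already been carried out in \cref{pm sec}, so here only a careful invocation of that earlier discussion is required.
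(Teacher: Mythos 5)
Your proposal is correct and matches the paper's own argument: both items are reduced to \cref{deg constant} via $\sL = g^*(\Oka_{\PP V}(1))$ and $\sL = f_*\Omega^2_{X/W}$, respectively, with the base change identification $i_s^*\paren*{f_*\Omega^2_{X/W}} \cong (f_s)_*\Omega^2_{X_s/W_s}$ (valid even for nonreduced $W$) supplied by \cref{pm sec}, exactly as in the paper. No gaps.
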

\begin{proof}
\Cref{deform base} follows from \cref{deg constant} when we apply it to $\sL = g^*(\Oka_{\PP V}(1))$. As a matter of fact, for all $s \in S$,
\[
\deg(g \circ i_s) = \deg_{W_s}\paren*{(g\circ i_s)^*(\Oka_{\PP V}(1))} = \deg_{W_s}\paren*{i_s^*(\sL)}.
\]

Concerning \cref{deform family}, let us write the family $\cF_{W_s}$ and the canonical morphism of families $\cF_{W_s} \to \cF$ as $(X_s,W_s,f_s)$ and $(q,i_s)$, respectively. Then ${q^*(\Omega^2_{X/W}) \cong \Omega^2_{X_s/W_s}}$ since relative differentials are compatible with base change. Moreover, even though $W$ might be nonreduced, we know by \cref{pm sec} that
\[i_s^*\paren*{f_*(\Omega^2_{X/W})} \cong (f_s)_*\paren*{q^*(\Omega^2_{X/W})}.\] 
Hence \cref{deform family} follows when we apply \cref{deg constant} to $\sL = f_*\Omega^2_{X/W}$.
\end{proof}

\section{Proof of the unobstructedness theorem}
\label{unobs sec}

\subsection{Douady space}\label{douady section}
The proof of \cref{unobs thm} makes use of the \emph{Douady space}, a complex analytic analog of the algebraic Hilbert scheme, introduced by Douady \cite{Douady}. When $X$ is a complex space, the Douady space of $X$, denoted  $\dou(X)$, parametrizes the compact complex subspaces of $X$. If $Y \subset X$ is a compact complex subspace, we let $[Y]$ denote the corresponding point in $\dou(X)$.

Recall that there is a closed complex subspace $Z\subset\dou(X)\times X$ universal with the property that the holomorphic map $\rest{\pr1}{Z} \colon Z \to \dou(X)$ is flat and proper. In other words, for every complex space $S$ and every closed complex subspace $Y\subset S\times X$ that is flat and proper over $S$ there exists a unique holomorphic map $b\colon S\to\dou(X)$ such that $Y$ is the pullback of the complex subspace $Z$ under $b\times\id X$.

\begin{theo}\label{q}
Let $r\ge3$ and $d\ge2$ be integers, $V$ be a complex vector space of dimension $r$, and $Q\subset\PP V$ be a smooth quadric hypersurface. Then the set of smooth rational curves of degree $d$ in $Q$ defines a smooth open subspace $S_d(Q) \subset \dou(Q)$ which is either empty or pure of dimension $(r-2)(d+1)-3$.
\end{theo}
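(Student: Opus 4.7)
The plan is to analyze $\dou(Q)$ locally around each point $[C] \in S_d(Q)$ by means of Douady's deformation theory: the Zariski tangent space of $\dou(Q)$ at such a point is $\Hsh0C{\Nb{C}{Q}}$ and the obstruction space is $\Hsh1C{\Nb{C}{Q}}$, so vanishing of the latter will yield smoothness of $\dou(Q)$ at $[C]$ of dimension $\dim_\CC\Hsh0C{\Nb{C}{Q}}$. The two technical tasks are thus the vanishing $\Hsh1C{\Nb{C}{Q}} = 0$ and the dimension count, plus checking openness of $S_d(Q)$ in $\dou(Q)$.

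For openness, I would invoke standard semicontinuity for the universal flat family over $\dou(Q)$: smoothness of the fiber is an open condition, and the Euler characteristics $\chi(\Oka_Y)$ and $\chi(\Oka_Y(1))$ are locally constant in a flat family; intersecting the loci ``fiber smooth of pure dimension $1$'', ``arithmetic genus $0$'', and ``degree $d$ in $\PP V$'' isolates $S_d(Q)$ as an open subspace.

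The heart of the argument is the vanishing $\Hsh1C{\Nb{C}{Q}} = 0$. Since $C$ and $Q$ are both smooth, $\Nb{C}{Q}$ is a vector bundle on $C \isom \CP1$, so it suffices to show global generation---then $\Nb{C}{Q} \isom \bigoplus \Oka(b_i)$ with $b_i \ge 0$ and $\Hsh1$ vanishes. For $r \ge 4$ I would use that the orthogonal group $O(V)$ acts transitively on $Q$: the infinitesimal action yields a surjection $\mathfrak{o}(V) \otimes_\CC \Oka_Q \surj T_Q$, whence $T_Q|_C$ and its quotient $\Nb{C}{Q}$ are globally generated. For $r = 3$ the quadric $Q$ is one-dimensional, so the only smooth rational curve of degree $d = 2$ in $Q$ is $Q$ itself and $\Nb{C}{Q} = 0$ trivially. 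I expect this homogeneity step to be the conceptual crux; a more hands-on alternative would verify that the restriction $\Hsh0C{\Nb{C}{\PP V}} \to \Hsh0C{\Oka_{\CP1}(2d)}$ is surjective, but the group-theoretic route seems cleaner.

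Finally, the dimension count follows by Riemann--Roch on $\CP1$: $\dim\Hsh0C{\Nb{C}{Q}} = \deg\Nb{C}{Q} + \rk\Nb{C}{Q}$. From the Euler sequence on $\PP V$ restricted to $C$ one reads off $\deg T_{\PP V}|_C = rd$; subtracting $T_C \isom \Oka_{\CP1}(2)$ gives $\Nb{C}{\PP V}$ of rank $r-2$ and degree $rd - 2$. Using $\Nb{Q}{\PP V} \isom \Oka_Q(2)$ and the normal bundle sequence $0 \to \Nb{C}{Q} \to \Nb{C}{\PP V} \to \Oka_{\CP1}(2d) \to 0$ yields $\rk\Nb{C}{Q} = r - 3$ and $\deg\Nb{C}{Q} = (r-2)d - 2$, and summing gives $(r-2)(d+1) - 3$ as claimed.
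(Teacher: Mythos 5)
Your proposal is correct, and at its heart it takes a genuinely different route from the paper. Both arguments have the same backbone: openness of $S_d(Q)$ plus Kodaira's criterion for the Douady space at a point $[C]$, which reduces everything to showing $\hdim1C{\Nb CQ}=0$ and $\hdim0C{\Nb CQ}=(r-2)(d+1)-3$. The paper gets the vanishing from the sequence $0\to\Nb CQ\to\Nb C{\PP V}\to\rest{\Nb Q{\PP V}}{C}\to0$ together with a cited ampleness theorem for $\Nb CQ$ (this is where $d\ge2$ enters), and it computes $\hdim0C{\Nb C{\PP V}}$ by passing through the linear span of $C$ and quoting a known result on normal bundles of rational curves in projective space. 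Your argument instead deduces global generation of $\Nb CQ$ from homogeneity of $Q$ under the orthogonal group (the infinitesimal action gives a surjection from a trivial bundle onto $\Theta_Q$, hence onto its quotient $\Nb CQ$ along $C$), which on $C\isom\CP1$ immediately kills $\mathsf H^1$, and then gets $\hdim0C{\Nb CQ}=\deg\Nb CQ+\rk\Nb CQ$ from Riemann--Roch after a Chern-class computation of rank and degree via the Euler and normal bundle sequences; your numbers check out. This is more self-contained (no external input beyond standard deformation theory of the Douady space), treats $r=3$ cleanly, and does not even need $d\ge2$---it covers lines, where ampleness genuinely fails---whereas the paper's route yields the extra information that $\Nb CQ$ is ample for $d\ge2$, which the theorem does not require. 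One caution on your openness step: ``fiber smooth of pure dimension $1$, $\chi(\Oka)=1$, degree $d$'' does not by itself characterize smooth rational curves, since a disconnected smooth curve (e.g.\ a rational plus an elliptic component) also has $\chi(\Oka)=1$; you should add connectedness, which is again open along $S_d(Q)$ by semicontinuity of $\hdim0{}{Y_t}{\Oka_{Y_t}}$---or argue as the paper does, using rigidity of $\CP1$ (nearby fibers of the universal family are again $\CP1$) combined with local constancy of the degree.
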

\begin{proof}
We divide the proof into the following two steps.
\begin{enumstep}
\item\label{openness} The set $S_d(Q)$ is open in $\dou(Q)$.
\item\label{nbdl} For every smooth rational curve $C\subset Q$ of degree $d$, \[\hdim0C{\Nb C{Q}} = (r-2)(d+1)-3 \quad \text{and} \quad \hdim1C{\Nb C{Q}}=0.\]
\end{enumstep}
\Cref{q} then follows from Kodaira's well-known criterion \cite[Theorem~2]{Kod63}. Without loss of generality we may and do assume that $\PP V = \CP{r-1}$.

\subsubsection*{\Cref{openness}}
Let $s\in \dou(Q)$ be a point corresponding to a smooth rational curve in $Q$. Let $p \colon Z \to \dou(Q)$ be the projection from the universal subspace $Z\subset\dou(Q)\times Q$. Then $p\inv(s)\cong\CP1$ and thus, as $\CP1$ is rigid, there is an open neighborhood $U$ of $s$ in $\dou(Q)$ with $p\inv(t)\cong\CP1$ for all $t\in U$. As $s$ was arbitrary, the set of smooth rational curves in $Q$ defines an open subspace $S\subset\dou(Q)$ and, moreover, the triple $(p\inv(S),S,\rest p{p\inv(S)})$ is a family of smooth rational curves.

Composing the canonical injections $p\inv(S)\inj S\times Q\inj S\times\CP{r-1}$ with the projection onto $\CP{r-1}$, we obtain a holomorphic map $g\colon p\inv(S)\to\CP{r-1}$. Applying \cref{deform base} of \cref{deg constant2} we see that the degree of the image curves $p\inv(t)\inj \CP{r-1}$ is locally constant in $t\in S$. Hence $S_d(Q)$ is an open subset of $\dou(Q)$.

\subsubsection*{\Cref{nbdl}}
Let $C \subset Q$ be a smooth rational curve of degree $d\geq2$. Define $P \cong \CP n$ to be the projective linear subspace of $\CP{r-1}$ spanned by $C$. Then we have an exact sequence of sheaves of $\Oka_C$-modules
\begin{equation}\label[sequ]{nbdlseq}
0\to\Nb C{P}\to\Nb C{\CP{r-1}}\to \rest{\Nb{P}{\CP{r-1}}}C \to0.
\end{equation}
We see that
\[\rest{\Nb {P}{\CP{r-1}}}C \cong \rest{\Oka_{P}(1)^{\oplus(r-1-n)}}C \cong \Oka_{C}(d)^{\oplus(r-1-n)},\]
and \cite[Corollary~1.45]{HM98} implies that
\[\hdim0C{\Nb C{P}}=(n+1)d+n-3 \quad \text{and} \quad \hdim1C{\Nb C{P}}=0.\]
Therefore we deduce that $\hdim0C{\Nb C{\CP {r-1}}}=r(d+1)-4$ and $\hdim1C{\Nb C{\CP {r-1}}}=0$ from the long exact sequence in cohomology associated to \cref{nbdlseq}.

Now we use the exact sequence of sheaves
\begin{equation}\label[sequ]{nbdlseq2}
0\to\Nb C Q\to\Nb C{\CP{r-1}}\to \rest{\Nb Q{\CP{r-1}}}C \to0.
\end{equation}
We see that
\[
\rest{\Nb Q{\CP{r-1}}}C \cong \rest{\Oka_{\CP{r-1}}(2)}C \cong \Oka_C(2d).
\]
Note that $\Nb C Q$ is ample by \cite[Theorem~1]{Bal1} as $d\geq2$. So $\hdim1C{\Nb C Q}=0$. Hence the long exact sequence associated to \cref{nbdlseq2} yields the result.
\end{proof}

\begin{coro}
\label{sd}
Let $d\geq2$ be an integer, $\Lambda$ be a lattice of rank $r\ge 3$ and signature $(3,r-3)$, and $U \subset \pdom\Lambda$ be an open subspace. Then the set of smooth rational curves of degree $d$ in $U$ defines a smooth open subspace $S_d(U) \subset \dou(U)$ which is either empty or pure of dimension $(r-2)(d+1)-3$.
\end{coro}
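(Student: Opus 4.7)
The plan is to reduce this statement directly to \cref{q}. By \cref{pdom sec}, the period domain $\pdom\Lambda$ is an open complex submanifold of the smooth quadric hypersurface $Q \subset \PP{\Lambda_\CC}$ cut out by $xx = 0$; hence $U$ is also open in $Q$. Since $r \ge 3$ and $d \ge 2$, \cref{q} applies and furnishes a smooth open subspace $S_d(Q) \subset \dou(Q)$ that is either empty or pure of dimension $(r-2)(d+1)-3$.

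The key intermediate step is to exhibit $\dou(U)$ as an open subspace of $\dou(Q)$, namely as the locus of points $[Y] \in \dou(Q)$ whose support is contained in $U$. Writing $p \colon Z \to \dou(Q)$ for the restriction of $\pr1$ to the universal subspace $Z \subset \dou(Q) \times Q$, this locus coincides with
\[ V \defeq \setb{s \in \dou(Q)}{p^{-1}(s) \subset \dou(Q) \times U}, \]
which is the complement of the image under the proper map $p$ of the closed subset $Z \setminus (\dou(Q) \times U)$, and is therefore open in $\dou(Q)$. Next I would check that a closed complex subspace $Y \subset S \times U$ that is flat and proper over $S$ is automatically closed in $S \times Q$. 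This follows from the properness of $Y \to S$ by separating a compact fiber $Y_s \subset U$ from any point of $Q \setminus U$ via a shrinking argument, using an open neighborhood $W$ of $Y_s$ with $\closure W \subset U$. Combining this with the universal properties of $\dou(U)$ and $\dou(Q)$ produces a canonical isomorphism of complex spaces $\dou(U) \isom V$ and turns the natural map $\dou(U) \inj \dou(Q)$ into an open immersion.

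Once this identification is in place, a smooth rational curve of degree $d$ in $U$ is exactly a smooth rational curve of degree $d$ in $Q$ whose support lies in $U$, so $S_d(U)$ equals $S_d(Q) \cap V$. This intersection is open in $V \isom \dou(U)$, and it inherits smoothness as well as pure dimension $(r-2)(d+1)-3$ directly from \cref{q}. The main obstacle is therefore the openness statement for $\dou(U) \inj \dou(Q)$, a general compatibility between Douady spaces and open immersions of ambient complex spaces that is left implicit in the excerpt; the properness argument sketched above supplies it, and one could alternatively cite Douady's original construction.
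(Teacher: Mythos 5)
Your proposal is correct and follows essentially the same route as the paper: apply \cref{q} to the smooth quadric $Q$ defined by the (nondegenerate) form of $\Lambda$, use that $U$ is open in $Q$ so that $\dou(U) \to \dou(Q)$ is an open embedding, and conclude via $S_d(U) = S_d(Q) \cap \dou(U)$. The only difference is that you spell out the proof of the open-embedding compatibility of Douady spaces with open subspaces (via properness of the universal family and the representability argument), which the paper simply asserts; that verification is sound.
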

\begin{proof}
Since the quadratic form of the lattice $\Lambda$ is nondegenerate, we can apply \cref{q} to the hypersurface $Q\subset\PP{\Lambda_{\CC}}$ defined by it. As $U \subset Q$ is open, the canonical map $\dou(U) \to \dou(Q)$ is an open embedding. Regarding the embedding as an inclusion, $S_d(U) = S_d(Q) \cap \dou(U)$. Hence our claim follows.
\end{proof}

\subsection{Overview over the proof of \cref{unobs thm}}
Let ${\cF\defeq (X,C,f)}$ be a family of ihs manifolds with $C\cong\CP1$ and $\mu$ be a $\Lambda$-marking of $\cF$ such that the period map $h \colon C \to \pdom\Lambda$ of $(\cF,\mu)$ is an embedding. Set $d \defeq -\deg_C\paren{f_*{\Omega^2_{X/C}}}$ and $r\defeq\rk\Lambda$. Then by \cref{degree prop} the degree of the map $h$ is equal to $d$ and $d\ge2$ by \cref{deg nonneg,isotrivial,deg ne1}.
Moreover $r$ is the second Betti number of every fiber of $f$. We divide the proof of \cref{unobs thm} into three steps.
\begin{enumstep}
\item\label{ThmApart1} Using the extension theorem, we construct a deformation $(\cX,(i,j))$ of $X$ such that the base space of $\cX$ is smooth and pure of dimension $m\defeq(r-2)(d+1)-3$.
\item\label{ThmApart2} We show that $(\cX,(i,j))$ is complete.
\item\label{ThmApart3} We show that $(\cX,(i,j))$ is semi-universal.
\end{enumstep}

To carry out \cref{ThmApart3} we exploit that $\hdim1X{\Theta_X}=m$. This, as well as the fact that $\hdim0X{\Theta_X}=0$, follow from \cref{tg cohom thm}, which we establish in \cref{tg cohom sec}. Besides, \cref{ThmApart1,ThmApart2,ThmApart3} imply that $X$ has a semi-universal deformation over a smooth complex space; that is, $X$ has unobstructed deformations.

\subsection{\Cref{ThmApart1} in the proof of \cref{unobs thm}}\label{pf step 1}
As the period map $h \colon C \to \pdom\Lambda$ is an embedding, by \cref{ihs extension thm} there exists a family of ihs manifolds $\tilde{\cF} = (\tilde{X},U,\tilde{f})$ together with a $\Lambda$-marking $\tilde{\mu}$ and a holomorphic map $g\colon X \to \tilde{X}$ such that
\begin{enumerate}
\item $U \subset\pdom\Lambda$ is an open subspace containing $h(C)$,
\item the period map of $(\tilde{\cF},\tilde{\mu})$ is the canonical injection $U \inj \pdom\Lambda$,
\item\label{extended family sq} $(g,h) \colon (\cF,\mu) \to (\tilde{\cF},\tilde{\mu})$ is a morphism of $\Lambda$-marked families.
\end{enumerate}
We let $S$ be the set of points in $\dou(U)$ corresponding to smooth rational curves in $U$ of embedding degree $d$. By \cref{sd} we can consider $S$ as a smooth open subspace of $\dou(U)$, which is pure of dimension $m$ for $L_0 \coloneqq h(C) \subset U$ defines a point $[L_0]\in S$. Let $Y\subset\dou(U)\times U$ be the universal subspace, see \cref{douady section}, and define $W\defeq Y\cap(S\times U)$ with projections $\bar{p}\colon W\to S$ and $q\colon W\to U$. Since
\[
W = \setb{([L],l)}{L\subset U \text{ smooth rational curve of degree $d$, }l\in L},
\]
we get a holomorphic map
\begin{align*}
\bar{h}\colon C&\to W\\
x&\mapsto ([L_0],h(x)).
\end{align*}
Evidently, $\bar{h}$ embeds $C$ into $W$ as the fiber of $\bar p$ over $[L_0]$. By the defining properties of $Y$, the map $\bar{p}$ is a proper submersion so that $(\cC,(\bar{h},j))$, with $\cC\defeq(W,S,\bar{p})$, is a deformation of $C$ where $j(0)\defeq[L_0]$.

We consider the pullback family $q^*(\tilde{\cF}) \eqqcolon (Z,W,F)$ together with the canonical morphism of families $(\tilde q,q) \colon q^*(\tilde\cF) \to \tilde\cF$. By the Cartesian property of the latter morphism, as $q\circ\bar{h}=h$, there exists a unique holomorphic map $i \colon X \to Z$ for which $(i,\bar{h})\colon\cF\to q^*(\tilde{\cF})$ is a morphism of families and $(\tilde q,q) \circ (i,\bar h) = (g,h)$. We obtain the following commutative diagram:
\begin{equation*}
\begin{tikzcd}[row sep=normal, column sep=large]
X\dar{f}\rar{i}\ar[bend left=25]{rr}{g}&Z\ar[near start]{r}{\tilde q}\ar[near start]{d}{F}&\tilde{X}\dar{\tilde{f}}\\
C\dar\ar[near end]{r}{\bar{h}}\ar[bend left=25, near start]{rr}{h}&W\rar{q}\dar{\bar{p}}&U\\
\pt\rar{j}&S
\end{tikzcd}
\end{equation*}
Setting $p \coloneqq \bar p\circ F$ and $\cX\defeq(Z,S,p)$, we obtain a deformation $(\cX,(i,j))$ of $X$.

\begin{lemm}\label{submersion}
Let $F\colon Z\to W$ and $\bar{p}\colon W\to S$ be holomorphic maps of complex spaces such that $\bar{p}$ and $\bar{p}\circ F$ are submersions. Let $z\in Z$ be a point and let $Z_s$ and $W_s$ denote the fibers of $\bar{p}\circ F$ and $\bar{p}$ over $s \coloneqq \bar{p}(F(z))$, respectively. Then the following are equivalent:
\begin{enumerate}
\item $F$ is a submersion at $z$.\label{condition1}
\item The induced map $F_s\colon Z_s\to W_s$ is a submersion at $z$.\label{condition2}
\end{enumerate}
\end{lemm}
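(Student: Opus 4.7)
The plan is to establish both implications via the local normal form of a submersion. Since $\bar{p}$ is submersive at $w \coloneqq F(z)$ and $p \coloneqq \bar{p} \circ F$ is submersive at $z$, Fischer's definition lets us assume, after shrinking, that there exist biholomorphisms $W \cong S \x V$ near $w$ and $Z \cong S \x U$ near $z$, with $V$ and $U$ open subsets of $\CC^m$ and $\CC^n$ respectively, under which $\bar{p}$ and $p$ become the canonical projections onto $S$. Writing $s_0$ for the common image of $z$ and $w$, we may arrange $w = (s_0, v_0)$ and $z = (s_0, u_0)$.

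The relation $\bar{p} \circ F = p$ forces $F$ to have the form $F(s,u) = (s, G(s,u))$ for a holomorphic map $G \colon S \x U \to V$. In particular $Z_{s_0} = \set{s_0}\x U$ and $W_{s_0} = \set{s_0}\x V$, and the induced map $F_{s_0}$ is simply $u \mapsto G(s_0, u)$.

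For the implication from $F$ being submersive at $z$ to $F_{s_0}$ being submersive at $u_0$, apply Fischer's definition to $F$: there is a biholomorphism $S\x U \cong (S\x V)\x U'$ near $z$, for some open $U' \subset \CC^{n-m}$, under which $F$ becomes the projection onto $S\x V$. Restricting to the fibers of $p$ and $\bar{p}$ over $s_0$ yields a local biholomorphism $U \cong V \x U'$ near $u_0$ under which $F_{s_0}$ becomes the projection onto $V$, whence $F_{s_0}$ is a submersion at $u_0$. Conversely, assume $F_{s_0}$ is submersive at $u_0$. After further shrinking and changing coordinates on $U$, we may write $U = V \x U'$ with $U' \subset \CC^{n-m}$ open, $u_0 = (v_0, u_0')$, and $F_{s_0}$ equal to the projection onto $V$. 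Define
\[
\phi \colon S \x V \x U' \lto S \x V \x U', \qquad \phi(s, v, u') \coloneqq (s, G(s, v, u'), u').
\]
Since $G(s_0, v, u') = v$, the partial $\partial_v G(s_0, v_0, u_0')$ equals the identity, and the holomorphic inverse function theorem in the variable $v$ (with $S \x U'$ as parameter space) shows that $\phi$ is a local biholomorphism at $(s_0, v_0, u_0')$. By construction $F \circ \phi\inv$ is the projection onto $S \x V$, so $F$ is submersive at $z$.

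The main obstacle is conceptual rather than computational: since $S$ may be nonreduced, a naive rank-of-differential argument applied directly to $F$ as a morphism of complex spaces is not available. The trick is to exploit the local product structures provided by the submersion hypotheses on $\bar{p}$ and $p$, which confine the problem to the smooth fiber directions where the holomorphic implicit function theorem---which is valid over arbitrary complex-space parameters---applies.
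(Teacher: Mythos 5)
Your argument is correct, and it takes a somewhat different route from the paper for the harder implication. For the direction "submersion at $z$ implies submersion on the fiber" the paper simply observes that base change preserves submersions, which is the one-line content of your normal-form restriction argument. For the converse, the paper first treats the case of a smooth base $S$ by a surjectivity-of-Jacobians (four-lemma) argument and then reduces the general case to it: it uses the product structures coming from the submersions $\bar p$ and $\bar p\circ F$, locally embeds $S$ as a closed subspace of an open set $\tilde S\subset\CC^m$, extends $F$ to a map $\tilde F$ over $\tilde S$ compatible with the projections (citing Fischer, 0.22, Corollary 2), and concludes from the smooth case. You instead stay over $S$ itself: after bringing the fiber map $F_{s_0}$ into submersion normal form you build the shear map $\phi(s,v,u')=(s,G(s,v,u'),u')$ and invoke a parametrized inverse function theorem with the (possibly nonreduced, singular) space $S\times U'$ as parameter space. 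That relative inverse function theorem is true and standard, so your proof is complete modulo a citation; but be aware that it is exactly where the nonreducedness of $S$ is absorbed, and its usual proof is the very extension trick the paper performs explicitly (extend $G$ off the closed subspace to a smooth ambient parameter space, apply the classical inverse function theorem, restrict back, noting the inverse again commutes with the projections). So in substance the two arguments converge; yours is more explicit in the fiber directions and avoids stating the smooth case separately, while the paper's reduction makes the dependence on the classical theory and on the extension lemma for closed subspaces transparent, at the cost of introducing the auxiliary embedding $S\subset\tilde S$. If you keep your version, add a precise reference (or a short proof) for the inverse function theorem over an arbitrary complex-space parameter, since that is the only nontrivial input beyond the normal forms.
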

\begin{proof}
\Cref{condition1} implies \cref{condition2} because base changes preserve submersions. Assume \cref{condition2} now. Moreover, first, assume that $S$ is smooth. Then the spaces $Z$ and $W$ are smooth, too, and for \cref{condition1} it suffices to check that the Jacobian map $\Tsp Fz \colon \Tsp Zz \to \Tsp W{F(z)}$ is surjective. The latter follows from an elementary four-lemma type argument.

Let $S$ be arbitrary now. Without loss of generality we assume that $Z = S\times B'$ and $W = S\times B$ with $B'$ and $B$ being open in $\CC^{n'}$ and $\CC^n$ and $F$ and $\bar p$ being the first projection maps, respectively. Furthermore we may assume that $S$ is a closed subspace of an open subspace $\tilde S \subset \CC^m$. By \cite[0.22, Corollary~2]{Fischer} we see that the holomorphic map $F \colon S\times B' \to S\times B$ is---at least in a neighborhood of the point $z$---induced by a holomorphic map $\tilde F \colon \tilde S\times B' \to \tilde S\times B$. The map $\tilde F$ can be chosen so that it commutes with the projections to $\tilde S$. The smooth case thus implies that $\tilde F$ is a submersion at $z$. Hence we obtain \cref{condition1}.
\end{proof}

In \cref{pf step 2} below we need that every deformation of $X$ lifts to a deformation of the holomorphic map $f\colon X \to C$ in the sense of Ran \cite[Definition~1.1 and §3]{Ran89}. Hence we recall a theorem on the stability of fiber structures.

\begin{theo}\label{horikawa}
Let $(X,Y,f)$ be a family of compact complex manifolds over a compact complex manifold $Y$ such that $f_*\Oka_X \cong \Oka_Y$ and $\Rsh1f{\Oka_X}=0$. Let $(\cX,(i,j))$ be a deformation of $X$ with $\cX=(Z,S,p)$. Then, after possibly shrinking $S$ around $j(0)$, there are a deformation $(\cY,(\bar i,j))$ of $Y$ with $\cY=(W,S,\bar{p})$ and a holomorphic map $F\colon Z\to W$ such that $(i,\bar i) \colon (X,Y,f) \to (Z,W,F)$ is a morphism of families of compact complex manifolds and $\bar p \circ F = p$.
\end{theo}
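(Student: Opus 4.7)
My plan is to realize $Y$ as an open subspace of the Douady space $\dou(X)$ and, more generally, $W$ as an open subspace of the relative Douady space $\dou(Z/S)$, then to read off $F$ from the universal property. In the absolute setting, the fibers of $f$ define an injective holomorphic map $\alpha\colon Y \to \dou(X)$, $y \mapsto [X_y]$. Since $f$ is a submersion, the restriction of the relative tangent sequence to $X_y$ yields $\Nb{X_y}X \cong f^*\Tsp Yy|_{X_y} \cong \Tsp Yy \otimes_\CC \Oka_{X_y}$; combined with the projection formula and the hypotheses $f_*\Oka_X \cong \Oka_Y$ and $\Rsh1f{\Oka_X} = 0$, this gives $\hdim0{X_y}{\Nb{X_y}X} = \dim_\CC \Tsp Yy$ and $\hdim1{X_y}{\Nb{X_y}X} = 0$ for every $y \in Y$. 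Kodaira's criterion \cite[Theorem~2]{Kod63} then makes $\dou(X)$ smooth of the expected dimension at $[X_y]$, and the Jacobian of $\alpha$ at $y$ is the tautological identification $\Tsp Yy \isom \Hsh0{X_y}{\Nb{X_y}X}$, so $\alpha$ is an open embedding.

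The same tangent-obstruction analysis applies relatively. Let $\dou(Z/S) \to S$ parametrize compact subspaces contained in fibers of $p$, with universal subspace $\cU \subset \dou(Z/S) \times_S Z$. The fiber of $\dou(Z/S) \to S$ over $j(0)$ equals $\dou(X)$, and the relative obstruction at $[X_y]$ is still $\Hsh1{X_y}{\Nb{X_y}X} = 0$; Kodaira's criterion applied to the structural map therefore produces, after shrinking $S$ around $j(0)$, an open subspace $W \subset \dou(Z/S)$ on which $\bar p\colon W \to S$ is a proper submersion with fiber $W_{j(0)} = \alpha(Y)$. Setting $\bar i \defeq \alpha$, the triple $\cY \defeq (W,S,\bar p)$ is then a deformation of $Y$.

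It remains to extract $F$. Consider the proper projection $\pr2\colon \cU|_W \to Z$; over $j(0)$ it restricts to the tautological bijection $\setb{(y,z)}{z \in X_y} \to X$, whose inverse is $(f,\id X)$, so $\pr2$ is a biholomorphism there. The locus in $\cU|_W$ on which $\pr2$ fails to be \'etale is closed, and its image in $S$ under $p \circ \pr2$ is closed by properness and misses $j(0)$; after shrinking $S$, the map $\pr2$ is therefore proper and \'etale of degree one, hence a biholomorphism. Then $F \defeq \pr1 \circ \pr2\inv\colon Z \to W$ satisfies $\bar p \circ F = p$ and restricts to $f$ on $X$, so $(i,\bar i)$ becomes a morphism of families $(X,Y,f) \to (Z,W,F)$. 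The chief technical obstacle is to arrange that the successive shrinkings of $S$ demanded by Kodaira's criterion, the base-change compatibility of the direct image sheaves involved, and the \'etaleness argument can all be accomplished inside a single open neighborhood of $j(0)$.
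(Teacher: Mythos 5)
Your route is genuinely different from the paper's: the paper disposes of \cref{horikawa} by quoting Kodaira for smooth $S$ and Ran's methods for arbitrary $S$, and then only has to upgrade $F$ to a submersion near $i(X)$ via \cref{submersion} and properness of $p$; you instead reconstruct $W$ and $F$ directly inside the relative Douady space, which would have the merit of being concrete. However, as written there is a genuine gap at the very first step. You deduce $\hdim1{X_y}{\Nb{X_y}X}=0$, i.e. $\hdim1{X_y}{\Oka_{X_y}}=0$, for \emph{every} $y\in Y$ from the hypothesis $\Rsh1f{\Oka_X}=0$. Cohomology and base change does not give this: vanishing of the direct image forces the fibrewise $h^1$ to vanish only for generic $y$, and since the fibres need not be Kähler, $\hdim1{X_y}{\Oka_{X_y}}$ may jump on a proper analytic subset while $\Rsh1f{\Oka_X}$ is still zero (on the level of the Grothendieck complex, $\mathsf{H}^1(K^\bullet)=0$ is compatible with $\mathsf{H}^1(K^\bullet\otimes k(y))\neq0$). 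Your entire construction---smoothness of $\dou(X)$ at $[X_y]$, the claim that $\alpha$ is an immersion, and smoothness of $\dou(Z/S)\to S$ along $\alpha(Y)$---requires the vanishing at every point, so this step must be proved or the hypotheses strengthened; the paper's direct-image formulation is tailored to Ran's theorem, which takes exactly these hypotheses.

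Two further points. The phrase ``Kodaira's criterion applied to the structural map'' is not available as cited: Kodaira's theorem is the absolute statement, and the paper uses it only when $S$ is smooth; what you need is smoothness of $\dou(Z/S)\to S$ at $[X_y]$ over an arbitrary, possibly non-reduced and singular base germ $(S,j(0))$. This relative criterion is true (via the obstruction calculus for the relative Douady functor), but it is precisely the hard part for arbitrary $S$ that the paper outsources to Ran, so it needs an argument or a reference, not the absolute citation. Finally, the conclusion requires $(Z,W,F)$ to be a family of compact complex manifolds, i.e. $F$ a proper holomorphic submersion; you never verify that $F=\pr1\circ\pr2\inv$ is a submersion. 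This is fixable---either show that the members of the Douady family near $\alpha(Y)$ remain smooth and use flatness, or argue as the paper does with \cref{submersion} and properness of $p$, shrinking $S$ once more---but it has to be said, since otherwise ``morphism of families'' does not parse. Your closing remark about making all shrinkings of $S$ simultaneous is, by contrast, harmless: finitely many shrinkings around $j(0)$ can always be intersected.
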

\begin{proof}
When $S$ is smooth, this is due to Kodaira \cite[87]{Kod63}. For arbitrary $S$ the methods of Ran \cite[Theorem~2.1]{Ran91} imply the existence of $\cY$, $\bar i$, and $F$ subject to all stipulated properties except for $F$ being a submersion. Since however $f \colon X\to Y$ is a submersion, \cref{submersion} implies that $F$ is a submersion at all points of $i(X) \subset Z$. Thus exploiting the properness of $p \colon Z\to S$, we can shrink $S$ further in order to make $F\colon Z\to W$ a submersion entirely.
\end{proof}

\subsection{\Cref{ThmApart2} in the proof of \cref{unobs thm}}\label{pf step 2}
To prove that the constructed deformation is complete, let there be given another deformation $(\cX',(i',j'))$ of $X$ with $\cX'=(Z',S',p')$. Up to shrinking $S'$ around $j'(0)$, we have to construct a morphism of families $(a,b)\colon\cX'\to\cX$ such that $(i,j) = (a,b) \circ (i',j')$. For the reader's convenience, all spaces and morphisms appearing in this construction are pictured in \cref{big diagram}.

As ihs manifolds are simply connected, $f_*\Oka_{X}\cong\Oka_C$ and $\Rsh1{f}{\Oka_{X}}=0$ so that we can apply \cref{horikawa} to the family $\cF$ and the deformation $(\cX',(i',j'))$ of $X$. After possibly shrinking $S'$ around $j'(0)$, we obtain a deformation $(\cC',(\bar{h}',j'))$ of $C$ with $\cC'=(W',S',\bar{p}')$ as well as a family of compact complex manifolds $\cF' \coloneqq (Z',W',F')$ such that $(i',\bar h') \colon \cF \to \cF'$ is a morphism of families and $p' = \bar p' \circ F'$.
By \cref{torellilike}, since the period map of $(\tilde\cF,\tilde\mu)$ is the canonical injection $U \inj \pdom\Lambda$, the family $\tilde\cF$ is semi-universal at all points of $U$.
Hence we may apply \cref{universal ex} of \cref{universal map} and after another shrinking of $S'$ around $j'(0)$ there exists a morphism of families $(g',h') \colon \cF' \to \tilde{\cF}$ such that $(g,h) = (g',h') \circ (i',\bar{h}')$.

As $C\isom\CP1$ is rigid, we can shrink $S'$ around $j'(0)$ so that $\cC'$ is a family of smooth rational curves. By \cref{deform base} of \cref{deg constant2} we can assume that, for every point $s \in S'$, the composition
\[
W'_s = \bar p'^{-1}(s) \inj W' \overset{h'}\longrightarrow U \inj \PP{\Lambda_\CC}
\]
is an embedding of degree $d$. In particular the holomorphic map
\[
(\bar p',h') \colon W' \to S' \times U
\]
is an embedding. Therefore, by virtue of the universal property of the Douady space, see \cref{douady section}, there exists a unique morphism of families $(b',b)\colon\cC'\to\cC$ such that $b'$ commutes with the projections to $U$; that is, $h' = q \circ b'$. Thus
\[
q\circ\bar{h} = h = h'\circ\bar{h}' = (q\circ b')\circ\bar{h}' = q\circ (b'\circ\bar{h}').
\]
Exploiting the uniqueness part of the universal property of the Douady space, we see that $(\bar h,j) = (b',b) \circ (\bar h',j')$.

Just like in \cref{pf step 1} above, using the Cartesian property of the canonical morphism of families $(\tilde q,q) \colon q^*(\tilde\cF) \to \tilde\cF$, we deduce the existence of a unique holomorphic map $a \colon Z' \to Z$ for which $(a,b')\colon \cF' \to q^*(\tilde\cF)$ a morphism of families such that ${(g',h') = (\tilde q,q) \circ (a,b')}$. As a consequence $(a,b)\colon\cX'\to\cX$ is a morphism of families. We have seen that $j = b \circ j'$. 
Using again the Cartesian property of $(\tilde q,q)\colon q^*(\tilde\cF) \to \tilde\cF$, we deduce that $i=a\circ i'$ for
\[
\tilde q \circ i = g = g' \circ i' = (\tilde q \circ a) \circ i' = \tilde q \circ (a \circ i').
\]
This proves that the deformation $(\cX,(i,j))$ of $X$ is complete.

\begin{figure}[ht]
\centering
\begin{tikzcd}[row sep=large, column sep=huge]
X\rar{i'}\dar{f}\ar[bend left=20]{rr}{i}&Z'\rar{a}\dar{F'}\ar[bend left=20]{rr}{g'}\ar[bend left=50, near end]{dd}{p'}&Z\ar[near start]{r}{\tilde q}\dar{F}\ar[bend left=50, near end]{dd}{p}&\tilde{X}\dar{\tilde{f}}\\
C\rar{\bar{h}'}\ar[bend left=20, near start]{rr}{\bar{h}}\dar&W'\rar{b'}\dar{\bar{p}'}\ar[bend left=20, near end]{rr}{h'}&W\rar{q}\dar{\bar{p}}&U\\
\pt\rar{j'}\ar[bend right=20]{rr}{j}&S'\rar{b}&S
\end{tikzcd}
\caption{The complex spaces and holomorphic maps that occur in \cref{ThmApart2} of the proof of \cref{unobs thm}. The diagram is commutative. In addition, the evident squares are Cartesian.}\label{big diagram}
\end{figure}
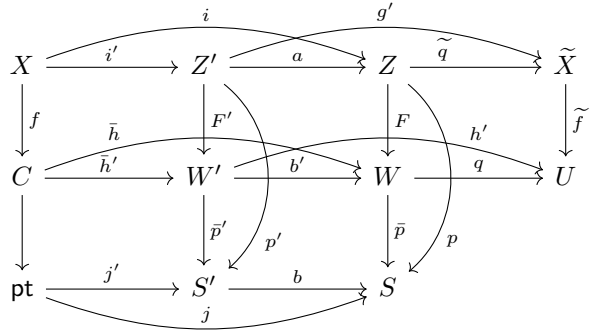

\subsection{\Cref{ThmApart3} in the proof of \cref{unobs thm}}\label{pf step 3}
By \cref{pf step 2} we know that the deformation $(\cX,(i,j))$ of $X$ is complete. Hence, its associated Kodaira--Spencer map
\[
\kappa \colon \Tsp S{j(0)} \to \Hsh1X{\Theta_X}
\]
is surjective \cite[cf.][Remark~5.2]{Cat88}. By \cref{pf step 1}, the complex space $S$ is smooth and of dimension $m=(r-2)(d+1)-3$ at $j(0)$; in particular, $\Tsp S{j(0)}$ is of dimension $m$. Since by \cref{tg cohom thm} the vector space $\Hsh1X{\Theta_X}$ is of dimension $m$, too, we infer that $\kappa$ is a bijection. This proves that $(\cX,(i,j))$ is a semi-universal deformation of $X$ \cite[loc.~cit.]{Cat88}.

\section{Tangent cohomology of families of ihs manifolds}
\label{tg cohom sec}

\subsection{}
Throughout \cref{tg cohom sec}, employing \cref{family deg}, we assume that $\cF \defeq (X,\CP1,f)$ is a family of ihs manifolds of degree $d$. By \cref{deg nonneg}, $d\ge0$.
Furthermore we know that for every integer $k$ the sheaf of abelian groups $\Rsh kf{\csh\ZZ X}$ is locally constant, whence constant, on $\CP1$. Let $b_k$ denote the rank of $\Rsh kf{\csh\ZZ X}$ which is likewise the $k$th Betti number of all fibers of $\cF$.

Our goal is to prove \cref{tg cohom thm}. We use the relative Frölicher spectral sequence, the Leray spectral sequence, and the relative cotangent sequence associated to the holomorphic map $f \colon X\to \CP1$ for that matter.

\begin{prop}
\label{relative froelicher}
\begin{enumtheo}
\item\label{RqOmegap free} Let $(p,q) \in \NN^2$ and $s \in \CP1$. Then $\Rsh qf{\Omega^p_{X/\CP1}}$ is a locally free sheaf of $\Oka_{\CP1}$-modules of rank $\hodge pq{X_s} \coloneqq \hdim q{X_s}{\Omega^p_{X_s}}$.
\item\label{froelicher degenerates} The relative Frölicher spectral sequence of $\cF$ degenerates at $E_1$.
\item\label{relative duality} $\Rsh2f{\Oka_X} \cong \Oka_{\CP1}(d)$ and $\Rsh1f{\Omega^1_{X/\CP1}} \cong (\Rsh1f{\Omega^1_{X/\CP1}})^\vee$ in the sense of sheaves of $\Oka_{\CP1}$-modules.
\item\label{ed}
\[
\hdim i{\CP1}{\Rsh1f{\Omega^1_{X/\CP1}}\otimes\Oka_{\CP1}(d)}= \begin{cases}(b_2-2)(d+1)&\text{when }i=0,\\0&\text{when }i=1.\end{cases}
\]
\item\label{b3} There is a short exact sequence of sheaves of $\Oka_{\CP1}$-modules
\[
0 \to \Rsh1f{\Omega^2_{X/\CP1}} \to \Oka^{\oplus b_3}_{\CP1} \to \Rsh2f{\Omega^1_{X/\CP1}} \to 0 .
\]
\end{enumtheo}
\end{prop}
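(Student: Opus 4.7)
The plan is to derive all five items from the Hodge theory of $\cF$. For \cref{RqOmegap free,froelicher degenerates} I would extend the strategy sketched in \cref{pm sec} for the case $p=2$: the fibers of $f$ are Kählerian, so the Hodge numbers $\hodge pq{X_s}$ are locally constant in $s\in\CP1$, and local unobstructedness lets us realize $\cF$, locally around every point of $\CP1$, as the pullback of a family of ihs manifolds over a smooth base. For such a pullback each $\Omega^p_{X/\CP1}$ is cohomologically flat in all degrees and the relative Frölicher spectral sequence degenerates at $E_1$; Grauert's base change theorem then yields local freeness of $\Rsh qf{\Omega^p_{X/\CP1}}$ of the asserted rank together with the base-change isomorphism, and both properties descend to the original $\CP1$-base.

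For \cref{relative duality}, the central input is the Beauville--Bogomolov form. Since $\CP1$ is simply connected, the sheaf $\Rsh2f{\csh\ZZ X}$ is constant with value a lattice $\Lambda$, and the BB form provides a nondegenerate symmetric $\Oka_{\CP1}$-bilinear pairing $B$ on $\Oka_{\CP1} \otimes_\ZZ \Rsh2f{\csh\ZZ X} \cong \Oka_{\CP1}^{\oplus b_2}$. By \cref{froelicher degenerates} the Hodge filtration $F^\bullet$ on this trivial bundle is holomorphic, with $F^2 = f_*\Omega^2_{X/\CP1} \cong \Oka_{\CP1}(-d)$, $F^1/F^2 \cong \Rsh1f{\Omega^1_{X/\CP1}}$, and $F^0/F^1 \cong \Rsh2f{\Oka_X}$. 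Applying the first Riemann bilinear relation fiberwise gives $B(F^2, F^2) = 0$ and $F^1 = (F^2)^\perp$, so the induced pairing $F^2 \otimes (F^0/F^1) \to \Oka_{\CP1}$ is perfect. This yields $\Rsh2f{\Oka_X} \cong (F^2)^\vee \cong \Oka_{\CP1}(d)$; moreover $B$ descends to a nondegenerate symmetric pairing on $F^1/F^2$, exhibiting $\Rsh1f{\Omega^1_{X/\CP1}}$ as its own dual.

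For \cref{ed} I invoke Grothendieck's splitting theorem and write $\cE \coloneqq \Rsh1f{\Omega^1_{X/\CP1}} \cong \bigoplus_i \Oka_{\CP1}(a_i)$. By \cref{relative duality} the multiset $\set{a_i}$ is symmetric about $0$, so $\sum_i a_i = 0$; the asserted values of $h^0$ and $h^1$ then follow by a direct computation once $|a_i| \leq d$ for every $i$. To obtain this bound, dualize the short exact sequence $0 \to F^1 \to \Oka_{\CP1}^{\oplus b_2} \to \Oka_{\CP1}(d) \to 0$ from \cref{relative duality}: this shows $(F^1)^\vee$ is globally generated of degree $d$, so $(F^1)^\vee \cong \bigoplus_j \Oka_{\CP1}(b_j)$ with $0 \leq b_j \leq d$. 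Next dualize $0 \to \Oka_{\CP1}(-d) \to F^1 \to \cE \to 0$ and use the self-duality of $\cE$ from \cref{relative duality} to exhibit $\cE$ as a subbundle of $(F^1)^\vee$ with quotient $\Oka_{\CP1}(d)$; then every $a_i$ satisfies $a_i \leq \max_j b_j \leq d$, and the symmetry of the splitting gives $a_i \geq -d$.

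Finally, \cref{b3} follows directly from \cref{froelicher degenerates} in total degree $p+q=3$: since $\hodge03{X_s} = \hodge30{X_s} = 0$ for ihs manifolds, we have $\Rsh3f{\Oka_X} = 0$, and the Hodge filtration on $\Oka_{\CP1} \otimes_\ZZ \Rsh3f{\csh\ZZ X} \cong \Oka_{\CP1}^{\oplus b_3}$ collapses to $F^0 = F^1 \supset F^2 = \Rsh1f{\Omega^2_{X/\CP1}} \supset F^3 = 0$, from which the asserted short exact sequence reads off. The main technical obstacle I anticipate is in \cref{relative duality}, namely promoting the pointwise Riemann bilinear relations to the sheaf-level identity $F^1 = (F^2)^\perp$; this requires careful compatibility between the flat $B$-pairing and the holomorphic Hodge filtration produced in \cref{froelicher degenerates}.
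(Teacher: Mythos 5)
Your proposal is correct and follows essentially the same route as the paper: constancy of the Hodge numbers of the Kähler fibers plus Grauert's base change for \cref{RqOmegap free,froelicher degenerates}, the locally constant Beauville--Bogomolov pairing with $\sF^1=(\sF^2)^\perp$ for \cref{relative duality}, a Birkhoff--Grothendieck splitting argument built on the two exact sequences $0\to\sF^1\to\Oka_{\CP1}^{\oplus b_2}\to\Oka_{\CP1}(d)\to0$ and $0\to\Oka_{\CP1}(-d)\to\sF^1\to\Rsh1f{\Omega^1_{X/\CP1}}\to0$ for \cref{ed}, and the vanishing of the $(3,0)$ and $(0,3)$ Hodge numbers for \cref{b3}. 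The only (harmless) deviation is in \cref{ed}, where you bound the splitting type of $\Rsh1f{\Omega^1_{X/\CP1}}$ directly via self-duality and the dualized sequences, whereas the paper bounds the splitting type of $\sF^1$ ($-d\le a_\nu\le 0$) and then passes through the long exact cohomology sequence; both computations are valid and give the same count.
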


\begin{proof}
Since every fiber of the family of compact complex manifolds $\cF$ is Kählerian, the function $h \colon \CP1 \to \NN$ given by $h(t) = \hodge pq{X_t}$ is constant \cite[Proposition 9.20]{Voisin}. Thus by Grauert's base change theorem \cite{EinTheorem} the sheaf of $\Oka_{\CP1}$-modules $\Rsh qf{\Omega^p_{X/\CP1}}$ is locally free and the evident base change map
\[
\CC \otimes_{\Oka_{\CP1,s}} \paren{\Rsh qf{\Omega^p_{X/\CP1}}}_s \to \Hsh q{X_s}{\Omega^p_{X_s}}
\]
is an isomorphism of complex vector spaces. This proves \cref{RqOmegap free}. Furthermore, we obtain \cref{froelicher degenerates} as a consequence of these facts \cite[251]{MHS}.

\Cref{relative duality}. The bilinear forms of the lattices $\Hsh2{X_t}\ZZ$, see \cref{ihs sec}, vary locally constantly with $t \in \CP1$, thus yield a symmetric $\ZZ$-bilinear sheaf map
\[
\Rsh2f{\csh\ZZ X} \times \Rsh2f{\csh\ZZ X} \to \csh\ZZ{\CP1}
\]
which is stalkwise nondegenerate. Extending the scalars by virtue of the morphism of sheaves of rings $\csh\ZZ{\CP1} \to \Oka_{\CP1}$, we obtain a nondegenerate symmetric $\Oka_{\CP1}$-bilinear sheaf map
\[
\sG \times \sG \to \Oka_{\CP1}, \quad \text{with} \quad \sG := \Oka_{\CP1} \otimes_{\csh\ZZ{\CP1}} \Rsh2f{\csh\ZZ X}\isom\Oka_{\CP1}^{\oplus b_2}.
\]
Let us write $(\sF^j)_{j \in \NN}$ for the Hodge filtration on $\sG$ \cite[loc.~cit.]{MHS}. Then $\sF^1$ is the perpendicular space of $\sF^2$ with respect to the latter pairing, and vice versa, since the same holds for every Beauville--Bogomolov form \cite[(1) on \pno~773]{Bea83}. Consequently we dispose of isomorphisms of sheaves of $\Oka_{\CP1}$-modules
\begin{gather*}
\sG/\sF^1 \to \sHom_{\Oka_{\CP1}}(\sF^2,\Oka_{\CP1}) = (\sF^2)^\vee,\\
\sF^1/\sF^2 \to \sHom_{\Oka_{\CP1}}(\sF^1/\sF^2,\Oka_{\CP1}) = (\sF^1/\sF^2)^\vee .
\end{gather*}
By \cref{froelicher degenerates} we know that
\begin{align*}
& \sG/\sF^1 \cong \Rsh2f{\Oka_X}, && \sF^1/\sF^2 \cong \Rsh1f{\Omega^1_{X/\CP1}}, && \text{and} & \sF^2 \cong f_*\Omega^2_{X/\CP1}.
\end{align*}
Hence the claim follows from our assumption that $\deg\cF = d$ which entails that $f_*\Omega^2_{X/\CP1}\cong\Oka_{\CP1}(-d)$.

\Cref{ed}. The results of \cref{froelicher degenerates,relative duality} give rise to the two short exact sequences of sheaves of $\Oka_{\CP1}$-modules
\begin{gather}
\label[sequ]{seq1}0\to \sF^1 \to\Oka_{\CP1}^{\oplus b_2} \to \Oka_{\CP1}(d) \to 0,\\
\label[sequ]{seq2}0\to \Oka_{\CP1}(-d) \to \sF^1 \to \Rsh1f{\Omega^1_{X/\CP1}} \to 0.
\end{gather}
By the Birkhoff--Grothendieck theorem the locally free sheaf of $\Oka_{\CP1}$-modules $\sF^1$ is isomorphic to a finite direct sum $\bigoplus\Oka_{\CP1}(a_\nu)$ for some integers $a_\nu\in\ZZ$. According to \cref{seq1}, $\sF^1$ is a subsheaf of a trivial sheaf of $\Oka_{\CP1}$-modules, so $a_\nu\leq0$ for all $\nu$. Moreover, \cref{seq1} shows that $\rk\sF^1=b_2-1$ and that the sum of the $a_\nu$ is $-d$. Thus $0\leq a_\nu+d\leq d$ for all $\nu$, whence
\begin{align*}
\hdim0{\CP1}{\sF^1(d)}&=\sum_{\nu=1}^{b_2-1} (a_\nu+d+1)= -d + (b_2-1)(d+1), \\
\hdim1{\CP1}{\sF^1(d)}&=\hdim0{\CP1}{\bigoplus\Oka_{\CP1}(-(a_\nu+d)-2)}=0.
\end{align*}
Tensoring \cref{seq2} with $\Oka_{\CP1}(d)$ and passing to the associated long exact sequence in cohomology, we see that
\begin{align*}
\hdim0{\CP1}{\Rsh1f{\Omega^1_{X/\CP1}}\otimes\Oka_{\CP1}(d)}&=\hdim0{\CP1}{\sF^1(d)}-1=(b_2-2)(d+1),\\\hdim1{\CP1}{\Rsh1f{\Omega^1_{X/\CP1}\otimes\Oka_{\CP1}(d)}}&=\hdim1{\CP1}{\sF^1(d)}=0,
\end{align*}
which proves the result.

\Cref{b3}. Consider the Hodge filtration $(\sF^j)_{j\in\NN}$ on the sheaf of $\Oka_{\CP1}$-modules
\[
\sH \coloneqq \Oka_{\CP1} \otimes_{\csh\ZZ{\CP1}} \Rsh3f{\csh\ZZ X} \cong \Oka_{\CP1}^{\oplus b_3}
\]
now. Then by \cref{froelicher degenerates}
\begin{align*}
\sH/\sF^1 &\cong \Rsh3f\Oka_X, & \sF^1/\sF^2 &\cong \Rsh2f{\Omega^1_{X/\CP1}},\\
\sF^3 &\cong f_*\Omega^3_{X/\CP1}, & \sF^2/\sF^3 &\cong \Rsh1f{\Omega^2_{X/\CP1}}.
\end{align*}
By \cref{RqOmegap free} the two sheaves on the left-hand side vanish, because the $(0,3)$ and $(3,0)$ Hodge numbers of every ihs manifold vanish \cite[762--764]{Bea83}. Hence we deduce the desired sequence from the short exact sequence 
\[
0\to \sF^2\to \sF^1\to \sF^1/\sF^2\to0. \qedhere
\]
\end{proof}

\begin{prop}\label{Omega-2 relative}
$\Theta_{X/\CP1} \cong \Omega^1_{X/\CP1} \otimes f^*\Oka_{\CP1}(d)$ as sheaves of $\Oka_X$-modules.
\end{prop}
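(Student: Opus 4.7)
The plan is to imitate the classical isomorphism $\Theta_Y\cong\Omega^1_Y$ induced by a holomorphic symplectic form on an ihs manifold $Y$, but in the relative setting, after correcting by the line bundle $f^*\Oka_{\CP1}(d)$. The correction arises because, in families over $\CP1$, the absolute sheaf $f_*\Omega^2_{X/\CP1}$ has degree $-d$, so a fiberwise symplectic form exists only after twisting by $\Oka_{\CP1}(d)$.

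First I would apply \cref{upstairs-downstairs} to obtain a global section
\[
\sigma\in\Hsh0X{\Omega^2_{X/\CP1}\otimes_{\Oka_X}f^*\Oka_{\CP1}(d)}
\]
that restricts to a holomorphic symplectic form $\sigma_t$ on every fiber $X_t$, $t\in\CP1$. Contraction with $\sigma$ then defines an $\Oka_X$-linear sheaf map
\[
\phi\from\Theta_{X/\CP1}\lto\Omega^1_{X/\CP1}\otimes_{\Oka_X}f^*\Oka_{\CP1}(d),\qquad v\longmapsto\iota_v\sigma.
\]

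Next I would show $\phi$ is an isomorphism. Since $f$ is a submersion, $\Omega^1_{X/\CP1}$ and $\Theta_{X/\CP1}$ are locally free of the same rank $2n=\dim X_t$, and tensoring with the line bundle $f^*\Oka_{\CP1}(d)$ preserves that rank on the target. To verify that $\phi$ is an isomorphism it therefore suffices to check that its induced map on fibers at each point $x\in X$ is an isomorphism of complex vector spaces; this will then imply the isomorphism property of $\phi$ by a standard Nakayama/determinant argument. But the fiber of $\phi$ at $x$, with $t=f(x)$, is the map
\[
\Tsp{X_t}{x}\lto\Tsp{X_t}{x}^\vee\otimes_\CC\bigl(f^*\Oka_{\CP1}(d)\bigr)\!\otimes_{\Oka_{X,x}}\!\CC,\qquad v\mapsto\iota_v\sigma_t,
\]
which is an isomorphism because $\sigma_t$ is a nondegenerate $2$-form on $X_t$ (the twisting factor is a $1$-dimensional $\CC$-vector space and so does not affect nondegeneracy).

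I do not anticipate a serious obstacle: the only nontrivial input is the existence of the twisted relative symplectic form, which is exactly what \cref{upstairs-downstairs} provides. The rest is the usual fiberwise check combined with the fact that both source and target are locally free of the same rank.
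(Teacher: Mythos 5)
Your argument is correct and follows the paper's proof essentially verbatim: both invoke \cref{upstairs-downstairs} to get the twisted relative symplectic section $\sigma$ and observe that contraction with $\sigma$ gives the desired isomorphism of $\Oka_X$-modules. The fiberwise nondegeneracy check you spell out is exactly the reason the paper's one-line conclusion holds, so there is nothing to add.
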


\begin{proof}
From \cref{upstairs-downstairs} we know that there exists a global section $\sigma$ in the sheaf $\Omega^2_{X/\CP1} \otimes f^*\Oka_{\CP1}(d)$ such that for every $t \in \CP1$ the pullback of $\sigma$ defines a holomorphic symplectic structure on the fiber $X_t$. Therefore the contraction with $\sigma$, which is a morphism of sheaves of $\Oka_X$-modules
\[
\Theta_{X/\CP1} \to \Omega^1_{X/\CP1} \otimes f^*\Oka_{\CP1}(d),
\]
is an isomorphism.
\end{proof}

\begin{rema}
\label{canonical}
Assume that the fibers of $\cF$ are of dimension $2n$ with $n \in \NN$. Then taking determinants we can use \cref{Omega-2 relative} and the relative cotangent sequence to calculate the relative canonical sheaf and the canonical sheaf of $X$ over $\CP1$ and $X$, respectively. We find that
\[
\omega_{X/\CP1}\cong f^*\Oka_{\CP1}\left(-nd\right) \quad \text{and} \quad \omega_X\cong f^*\Oka_{\CP1}\left(-2-nd\right).
\]
\end{rema}

\begin{theo}\label{tg cohom thm}
Assume that for a $\Lambda$-marking $\mu$ of $\cF$ the associated period map $h \colon \CP1 \to \pdom\Lambda$ is an immersion. Then $d\ge2$ and
\begin{equation*}
\hdim iX{\Theta_X} = \begin{cases}
0  & \text{when } i=0,\\
(b_2-2)(d+1)-3 & \text{when } i=1.
\end{cases}
\end{equation*}
Moreover if $b_3\neq0$, the obstruction space $\Hsh2X{\Theta_X}$ is nontrivial. If $\cF$ is a family of K3 surfaces---that is, if the fibers of $\cF$ are of dimension $2$---then
\begin{align*}
\hdim iX{\Theta_X} = \begin{cases}
d+3  & \text{when } i=2,\\
0 & \text{when } i=3.
\end{cases}
\end{align*}
\end{theo}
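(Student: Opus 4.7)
My plan is to combine the relative tangent sequence
\[
0 \to \Theta_{X/\CP1} \to \Theta_X \to f^*\Theta_{\CP1} \to 0
\]
with \cref{Omega-2 relative}, which identifies $\Theta_{X/\CP1}$ with $\Omega^1_{X/\CP1} \otimes f^*\Oka_{\CP1}(d)$. Via the projection formula, all direct images $R^q f_* \Theta_{X/\CP1}$ and $R^q f_* f^*\Theta_{\CP1}$ reduce to twists of sheaves already computed in \cref{relative froelicher}. Concretely $R^q f_* f^*\Theta_{\CP1} \cong R^q f_*\Oka_X \otimes \Oka_{\CP1}(2)$ equals $\Oka_{\CP1}(2),\, 0,\, \Oka_{\CP1}(d+2),\, 0$ for $q=0,1,2,3$ by \cref{relative duality} and the vanishing of $h^{0,1}$ and $h^{0,3}$ for ihs manifolds; similarly $R^q f_* \Theta_{X/\CP1} \cong R^q f_* \Omega^1_{X/\CP1} \otimes \Oka_{\CP1}(d)$ vanishes for $q=0$ (as $h^{1,0}=0$), equals the sheaf of \cref{ed} for $q=1$, and fits into the $\Oka_{\CP1}(d)$-twist of the sequence of \cref{b3} for $q=2$.

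The bound $d\ge2$ is automatic: \cref{deg nonneg} gives $d\ge0$, \cref{isotrivial} rules out $d=0$ since a trivial family has constant period map, and \cref{deg ne1} rules out $d=1$.

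The crucial step is to identify the edge map $\Oka_{\CP1}(2) \to R^1 f_* \Theta_{X/\CP1}$ from the long exact sequence of $R^\bullet f_*$ with the Kodaira--Spencer map of $\cF$ and to verify that it is injective with locally free cokernel. For this I invoke the standard factorization of the differential of the period map at $t\in\CP1$ as Kodaira--Spencer followed by contraction with the holomorphic symplectic form, together with the local Torelli fact that for ihs manifolds this contraction is injective on $\Hsh1{X_t}{\Theta_{X_t}}$ (compare \cref{local Torelli}). The immersion hypothesis on $h$ thus forces pointwise nonvanishing, equivalently fiberwise injectivity, of the Kodaira--Spencer map; as a morphism from a line bundle to a locally free sheaf that is fiberwise injective, this is automatically injective with a locally free cokernel. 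From the LES I then obtain $f_*\Theta_X = 0$, a short exact sequence
\[
0 \to \Oka_{\CP1}(2) \to R^1 f_* \Theta_{X/\CP1} \to R^1 f_* \Theta_X \to 0,
\]
and the injection $R^2 f_* \Theta_{X/\CP1} \hookrightarrow R^2 f_* \Theta_X$.

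Because $\CP1$ has cohomological dimension $1$, the Leray spectral sequence for $f$ degenerates at $E_2$ and yields
\[
0 \to \Hsh1{\CP1}{R^{i-1}f_*\Theta_X} \to \Hsh iX{\Theta_X} \to \Hsh0{\CP1}{R^if_*\Theta_X} \to 0
\]
for every $i$. Feeding in $\hdim0{\CP1}{\Oka_{\CP1}(2)}=3$, $\hdim1{\CP1}{\Oka_{\CP1}(2)}=0$, and the cohomology data in \cref{ed}, the sequence for $R^1 f_*\Theta_X$ gives $\hdim0X{\Theta_X}=0$ and $\hdim1X{\Theta_X}=(b_2-2)(d+1)-3$. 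For the obstruction claim, observe that by \cref{b3} the locally free sheaf $R^2 f_*\Omega^1_{X/\CP1}$ is a quotient of $\Oka_{\CP1}^{\oplus b_3}$, so its Birkhoff--Grothendieck summands all have nonnegative degree; if $b_3\ne0$ then its rank $h^{1,2}(X_t)=b_3/2$ is positive, so $R^2 f_*\Theta_{X/\CP1}$ carries nonzero global sections, which inject into $\Hsh0{\CP1}{R^2 f_*\Theta_X}$---a quotient of $\Hsh2X{\Theta_X}$---showing the obstruction space is nontrivial. In the K3 case $b_3=0$ collapses \cref{b3} to $R^1 f_*\Omega^2 = R^2 f_*\Omega^1 = 0$, and the fiber being of dimension $2$ gives $R^3 f_*\Omega^q=0$; the LES then simplifies to $R^2 f_*\Theta_X\cong \Oka_{\CP1}(d+2)$ and $R^3 f_*\Theta_X = 0$, whence $\hdim2X{\Theta_X}=d+3$ and $\hdim3X{\Theta_X}=0$ via the Leray sequence. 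The principal obstacle will be correctly identifying the edge morphism as Kodaira--Spencer and translating \emph{immersion} into fiberwise injectivity; everything else is bookkeeping with \cref{relative froelicher} and line bundles on $\CP1$.
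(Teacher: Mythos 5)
Your proposal is correct and follows essentially the same route as the paper's proof: the relative tangent sequence combined with \cref{Omega-2 relative} and the projection formula, identification of the connecting homomorphism with the Kodaira--Spencer map so that the immersion hypothesis gives injectivity (hence $f_*\Theta_X=0$ and the short exact sequence onto $\Rsh1f{\Theta_X}$), and then the degenerate Leray spectral sequence fed with the computations of \cref{relative froelicher}, including the same treatment of the $b_3\neq0$ and K3 cases. The only cosmetic differences---your extra appeal to injectivity of contraction with the symplectic form and the locally-free-cokernel remark---are harmless and not needed.
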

\begin{proof}
We have already noted that $d\ge0$. By \cref{degree prop} we know that $\deg h = d$. Thus since $h$ is not constant, $d\ne0$. By virtue of \cref{deg ne1} we conclude that $d\ge2$.

Now consider the relative tangent sequence
\[
0 \to \Theta_{X/\CP1} \to \Theta_X \to f^*\Theta_{\CP1} \to 0,
\]
which is short exact for $f\colon X\to \CP1$ is a submersion. We contend that the beginning of the associated long exact sequence with respect to the pushforward by $f$ has the following form:
\begin{equation}\label[sequ]{longseq}
\begin{tikzcd}[column sep=small]
0 \rar & 0 \rar & \Oka_{\CP1}(2) \ar[out=-30, in=150]{dll} \\
\Rsh1f{\Omega^1_{X/\CP1}}\otimes\Oka_{\CP1}(d) \rar[] & \Rsh1f{\Theta_X} \rar & 0 \ar[out=-30, in=150]{dll} \\
\Rsh2f{\Omega^1_{X/\CP1}}\otimes\Oka_{\CP1}(d) \rar[] & \Rsh2f{\Theta_X} \rar & \Oka_{\CP1}(d+2)
\end{tikzcd}
\end{equation}

Indeed, for all $i \in \ZZ$, we can calculate the sheaf $\Rshp if{f^*\Theta_{\CP1}}$ by means of the projection formula:
\[
\Rshp if{f^*\Theta_{\CP1}} \cong \Rshp if{\Oka_X \otimes f^*\Oka_{\CP1}(2)} \cong \Rsh if{\Oka_X} \otimes \Oka_{\CP1}(2).
\]
Since the $(0,0)$ and $(0,1)$ Hodge numbers of ihs manifolds are equal to $1$ and $0$, respectively, using \cref{RqOmegap free,relative duality} of \cref{relative froelicher}, we obtain that
\[
\Rsh if{(f^*\Theta_{\CP1})}\cong\begin{cases}
\Oka_{\CP1}(2) & \text{when } i=0,\\
0 & \text{when } i=1,\\
\Oka_{\CP1}(d+2) & \text{when } i=2.\\
\end{cases}
\]
By \cref{Omega-2 relative} and the projection formula,
\[
\Rsh i f{\Theta_{X/\CP1}}\cong\Rsh i f{\Omega^1_{X/\CP1}}\otimes\Oka_{\CP1}(d)
\]
for all $i \in \ZZ$; in particular $f_*\Theta_{X/\CP1}=0$ because $\hodge10{X_s}=0$ for $s \in \CP1$ arbitrary.

According to Griffiths's interpretation \cite[(1.20)]{Gri68}, the Jacobian map at $s$ of our period map $h$ factorizes over the Kodaira--Spencer map $\kappa$ as follows:
\[
\begin{tikzcd}[column sep=small]
\Tsp{\CP1}s \arrow{rr}{\Tsp hs} \arrow{dr}{\kappa} & & \Tsp{\pdom\Lambda}{h(s)} \\
& \Hsh1{X_s}{\Theta_{X_s}} \ar[dotted]{ru}{\exists\gamma}
\end{tikzcd}
\]
Since we assumed $h$ to be an immersion, $\Tsp hs$ and whence $\kappa$ are injective. Thus the connecting homomorphism
\[\Theta_{\CP1} \cong f_*(f^*\Theta_{\CP1}) \to \Rsh1f{\Theta_{X/\CP1}}\]
in our long exact sequence is an injective sheaf map. With $f_*\Theta_{X/\CP1}=0$ we conclude that $f_*\Theta_X=0$. In addition, \cref{longseq} gives rise to a short exact sequence of sheaves of $\Oka_{\CP1}$-modules
\[
0\to\Oka_{\CP1}(2)\to\Rsh1f{\Omega^1_{X/\CP1}}\otimes\Oka_{\CP1}(d)\to\Rsh1f{\Theta_X}\to0.
\]
Passing to the long exact sequence in cohomology and applying \cref{ed} of \cref{relative froelicher}, we deduce that
\begin{align*}
\hdim0{\CP1}{\Rsh1f{\Theta_X}} &= \hdim0{\CP1}{\Rsh1f{\Omega^1_{X/\CP1}}\otimes\Oka_{\CP1}(d)}-\hdim0{\CP1}{\Oka_{\CP1}(2)} \\
&=(b_2-2)(d+1)-3,\\
\hdim1{\CP1}{\Rsh1f{\Theta_X}} &= \hdim1{\CP1}{\Rsh1f{\Omega^1_{X/\CP1}}\otimes\Oka_{\CP1}(d)}=0.
\end{align*}

Next we consider the Leray spectral sequence
\[E_2^{pq} \cong \Hsh p{\CP1}{\Rsh qf\Theta_X} \Rightarrow \Hsh{p+q}X{\Theta_X}\]
for the holomorphic map $f \colon X\to \CP1$ and the sheaf of $\Oka_X$-modules $\Theta_X$.
The entries $E_2^{pq}$ are zero for $p>1$, because $\CP1$ has dimension $1$ and the sheaves of $\Oka_{\CP1}$-modules $\Rsh qf\Theta_X$ are coherent. Hence the spectral sequence degenerates on $E_2$ and we get the following formulae:
\begin{align*}
\hdim0X{\Theta_X} &= \hdim0{\CP1}{f_*\Theta_X}=0, \\
\hdim1X{\Theta_X} &= \hdim0{\CP1}{\Rsh1f{\Theta_X}}+\hdim1{\CP1}{f_*\Theta_X} = (b_2-2)(d+1)-3, \\
\hdim2X{\Theta_X} &= \hdim0{\CP1}{\Rsh2f{\Theta_X}}+\hdim1{\CP1}{\Rsh1f{\Theta_X}} = \hdim0{\CP1}{\Rsh2f{\Theta_X}},\\
\hdim3X{\Theta_X} &= \hdim0{\CP1}{\Rsh3f{\Theta_X}}+\hdim1{\CP1}{\Rsh2f{\Theta_X}}.
\end{align*}

Assume that $b_3\ne 0$ now. Notice that $b_3$ is an even number and that in the short exact sequence of \cref{b3} of \cref{relative froelicher} the ranks of the locally free sheaves of $\Oka_{\CP1}$-modules $\Rsh1f{\Omega^2_{X/\CP1}}$ and $\Rsh2f{\Omega^1_{X/\CP1}}$ are both equal to $b_3/2$, which is a number strictly less than $b_3$. In particular there exists a global section in the sheaf $\Oka_{\CP1}^{\oplus b_3}$ that does not vanish in the quotient $\Rsh2f{\Omega^1_{X/\CP1}}$. Thus $\Rsh2f{\Omega^1_{X/\CP1}}\otimes\Oka_{\CP1}(d)$ has a nontrivial global section, too. Invoking \cref{longseq} we conclude that
\[
\hdim2X{\Theta_X} = \hdim0{\CP1}{\Rsh2f{\Theta_X}} \ge \hdim0{\CP1}{\Rsh2f{\Omega^1_{X/\CP1}}\otimes\Oka_{\CP1}(d)} > 0.
\]

Last but not least, drop the assumption that $b_3\ne0$ and assume that $\cF$ is a family of K3 surfaces instead. Then $\Rsh if{\Omega^1_{X/\CP1}}=0$ for $i \in \set{2,3}$ and $\Rsh3f{\Oka_X} = 0$ given that the corresponding Hodge numbers of K3 surfaces vanish. Hence writing out the next line in the long exact \cref{longseq}, we see that $\Rsh2f{\Theta_X} \cong \Oka_{\CP1}(d+2)$ and $\Rsh3f{\Theta_X} = 0$. Therefore
\begin{align*}
\hdim2X{\Theta_X}&=\hdim0{\CP1}{\Oka_{\CP1}(d+2)}=d+3,\\
\hdim3X{\Theta_X}&=\hdim1{\CP1}{\Oka_{\CP1}(d+2)}=0. \qedhere
\end{align*}
\end{proof}

\section{Universal morphisms along subspaces}
\label{universal map sec}

\subsection{}\label{universality intro}
Consider a compact complex manifold $X$ for which every global holomorphic vector field on $X$ is trivial.
Then every semi-universal deformation of $X$ is likewise universal \cite[\nopp I.10.5--6]{BPV84}. Yet, more is true.

Consider, for $k \in \set{1,2}$, a deformation $(\cX_k,\iota_k)$ of $X$ such that $\cX_k$ is a family over $D_k$ and $\iota_k$ is a morphism of families over the holomorphic map $b_k \colon \pt \to D_k$. Assume that $(\cX_1,\iota_1)$ is semi-universal, whence universal. Moreover let $U$ and $U'$ be open subspaces of $D_2$ containing the point $b_2(0)$ and let
\[
\phi = (g,h) \colon (\cX_2)_U \to \cX_1 \quad \text{and} \quad \phi' = (g',h') \colon (\cX_2)_{U'} \to \cX_1
\]
be morphisms of families satisfying $\iota_1 = \phi \circ \iota_2$ and $\iota_1 = \phi' \circ \iota_2$, respectively. 
Then not only does there exist an open subspace $V \subset U\cap U'$ so that $b_2(0) \in V$ and $\rest{h}V = \rest{h'}V$, but there exists an open subspace $W \subset U\cap U'$ so that $b_2(0) \in W$ and $\rest\phi W = \rest{\phi'}W$ as morphisms of families $(\cX_2)_W \to \cX_1$ \cite[cf.][Remark after I.10.6]{BPV84}.

For later reference we briefly elaborate on the argument.

\begin{theo}\label{rigidity}
Let $\cF$ be a family of compact complex manifolds over $S$ and let ${\psi \colon \cF \to \cF}$ be an $S$-morphism of families so that $\psi(t) = \id{\cF(t)}$ for a point $t \in S$. Assume that $\Hsh0{\cF(t)}{\Theta_{\cF(t)}} = \set0$. Then there exists an open subspace $W \subset S$ such that $t \in W$ and $\psi_W = \id{\cF_W} \colon \cF_W \to \cF_W$.
\end{theo}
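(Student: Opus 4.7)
The plan is to prove by induction that $\psi$ agrees with the identity on every infinitesimal thickening of $\cF(t)$ inside the total space of $\cF$, to then promote this formal agreement to agreement on an open neighborhood of $\cF(t)$ via Krull's intersection theorem, and finally to use properness of the structural projection to extract an open subspace $W \subset S$.

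Write $\cF = (X,S,f)$, let $\mathfrak m \subset \Oka_{S,t}$ be the maximal ideal, and for each $n \in \NN$ let $S_n \subset S$ be the closed complex subspace defined (as a germ at $t$) by $\mathfrak m^{n+1}$, setting $X_n \defeq f^{-1}(S_n)$. The base case $n=0$ is the hypothesis $\psi(t) = \id{\cF(t)}$. For the inductive step the key point is that, given $\psi_{S_n} = \id{X_n}$, the difference $\psi^* - \id$ on $\Oka_{X_{n+1}}$ takes values in the square-zero ideal $\mathcal I$ of $X_n$ in $X_{n+1}$, and a short computation with the Leibniz rule modulo $\mathcal I^2 = 0$ shows that it factors through a $\CC$-linear derivation $\Oka_{\cF(t)} \to \mathcal I$. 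Flatness of $f$ identifies $\mathcal I$ with $\Oka_{\cF(t)} \otimes_\CC (\mathfrak m^{n+1}/\mathfrak m^{n+2})$, so the derivation corresponds to a section of $\Theta_{\cF(t)} \otimes_\CC (\mathfrak m^{n+1}/\mathfrak m^{n+2})$. The vanishing hypothesis $\Hsh0{\cF(t)}{\Theta_{\cF(t)}} = \set0$ kills this section, giving $\psi_{S_{n+1}} = \id{X_{n+1}}$.

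For the second half, consider the closed complex subspace $A \subset X$ equalizing $\psi$ and $\id{X}$---that is, the preimage of the diagonal of $X \x X$ under $(\psi, \id{X})$. The inductive conclusion reads $X_n \subset A$ for every $n$, which at any point $x \in \cF(t)$ places the stalk of the ideal sheaf of $A$ inside $\bigcap_n \mathfrak m_x^{n+1}$, where $\mathfrak m_x$ is the maximal ideal of $\Oka_{X,x}$. Krull's intersection theorem in the Noetherian local ring $\Oka_{X,x}$ collapses this intersection to zero, so $A$ coincides with $X$ on an open neighborhood $U_x$ of $x$. Covering the compact fiber $\cF(t)$ by finitely many such $U_x$ produces an open set $U \subset X$ with $\cF(t) \subset U \subset A$. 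Properness of $f$ makes $f(X \setminus U)$ closed in $S$ and disjoint from $t$, so $W \defeq S \setminus f(X \setminus U)$ is an open neighborhood of $t$ with $f^{-1}(W) \subset U$, whence $\psi_W = \id{\cF_W}$.

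The step I expect to be the main obstacle is the infinitesimal classification inside the induction: when $S$ is not smooth at $t$, one has to handle the graded algebra $\bigoplus_n \mathfrak m^n/\mathfrak m^{n+1}$ and exploit flatness of $f$ to identify $\mathcal I$ with the tensor product above. Everything else---the Leibniz-rule derivation check, the Krull argument, and the passage from $\cF(t) \subset U$ to $f^{-1}(W) \subset U$ via properness---is routine.
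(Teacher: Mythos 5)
Your argument is correct, and since the paper proves this statement only by citing Looijenga--Peters, there is no in-text proof to measure it against; what you give is a complete, self-contained rigidity argument of the classical type (formal induction along the infinitesimal neighborhoods of $t$, then Krull, then properness), which is in the same spirit as the cited source but spelled out from scratch. The delicate points all check out: in the inductive step the map $D=\psi^{*}-\id{{}}$ on $\Oka_{X_{n+1}}$ annihilates $f^{\sharp}\Oka_{S_{n+1}}$ because $\psi$ is an $S$-morphism, and it also annihilates $\mathfrak m\Oka_{X_{n+1}}$ because $D(a\,f^{\sharp}m)=f^{\sharp}(m)\,D(a)$ and $\mathfrak m\cdot\mathcal I=0$, so $D$ really does descend to a $\CC$-linear derivation $\Oka_{\cF(t)}\to\mathcal I$; the identification $\mathcal I\cong\Oka_{\cF(t)}\otimes_{\CC}(\mathfrak m^{n+1}/\mathfrak m^{n+2})$ is legitimate since the submersion $f$ is flat, and then $\Hsh0{\cF(t)}{\Theta_{\cF(t)}}=\set{0}$ forces $D=0$; since the underlying topological map is already the identity (the thickenings all live on $\abs{\cF(t)}$ and $\psi(t)=\id{\cF(t)}$), this gives $\psi_{S_{n+1}}=\id{X_{n+1}}$. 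The passage from formal to genuine agreement is also sound: the ideal sheaf of the equalizer $A$ is coherent, the inclusions $X_n\subset A$ put its stalk at any $x\in\cF(t)$ inside $\bigcap_n\mathfrak m_x^{n+1}=0$ by Krull in the Noetherian local ring $\Oka_{X,x}$, so $A$ contains an open neighborhood of the compact fiber, and the standard properness argument converts this into an open $W\subset S$ with $\psi_W=\id{\cF_W}$. The only cosmetic caveat is to say explicitly that $A$ is formed inside $X\times X$ (or $X\times_S X$), using that complex spaces are Hausdorff here, so that the diagonal is a closed complex subspace; with that remark your proof could replace the citation.
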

\begin{proof}
See Looijenga and Peters \cite[170]{LP80}.
\end{proof}

\begin{coro}\label{eta=eta' coro}
Let $\cF_1$ and $\cF_2$ be two families of compact complex manifolds and $\phi,\phi' \colon \cF_2 \to \cF_1$ be two morphisms of families over the same base map $h \colon D_2 \to D_1$. Assume that $\Hsh0{\cF_2(s)}{\Theta_{\cF_2(s)}} = \set0$ for all $s \in D_2$. Then the set
\[
W \coloneqq \setb*{s \in D_2}{\phi(s) = \phi'(s) \colon \cF_2(s) \to \cF_1(h(s))}
\]
is open in $D_2$. Moreover, the morphisms $\phi$ and $\phi'$ restrict to the same morphism of families $(\cF_2)_W \to \cF_1$.
\end{coro}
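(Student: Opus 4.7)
The plan is to reduce the statement to a single application of \cref{rigidity}. By the very definition of a morphism of families in \cref{families sec}, both $\phi$ and $\phi'$ have Cartesian underlying squares over the common base map $h$. Pulling $\cF_1$ back along $h$ as in \cref{pullback sec} yields a family $h^*(\cF_1)$ over $D_2$ together with its canonical morphism $\eta \colon h^*(\cF_1) \to \cF_1$ over $h$, and each of $\phi,\phi'$ factors uniquely as $\phi = \eta \circ \tilde\phi$ and $\phi' = \eta \circ \tilde\phi'$ for $D_2$-morphisms $\tilde\phi,\tilde\phi' \colon \cF_2 \to h^*(\cF_1)$. The Cartesian property of $\phi$ and $\phi'$ forces $\tilde\phi$ and $\tilde\phi'$ to be $D_2$-isomorphisms, so I can form the $D_2$-automorphism $\psi \coloneqq (\tilde\phi')^{-1} \circ \tilde\phi \colon \cF_2 \to \cF_2$.

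Fiberwise, $\eta(s) \colon h^*(\cF_1)(s) \to \cF_1(h(s))$ is a biholomorphism, so $\phi(s) = \phi'(s)$ holds precisely when $\tilde\phi(s) = \tilde\phi'(s)$, i.e., precisely when $\psi(s) = \id{\cF_2(s)}$. For every $s \in W$ I would then invoke \cref{rigidity} applied to $\psi$ at $s$, using the hypothesis $\Hsh0{\cF_2(s)}{\Theta_{\cF_2(s)}} = 0$, to obtain an open neighborhood $W_s \subset D_2$ of $s$ on which $\psi$ restricts to the identity of $(\cF_2)_{W_s}$. This simultaneously shows $W_s \subset W$, proving that $W$ is open, and yields $\tilde\phi_{W_s} = \tilde\phi'_{W_s}$, hence $\phi_{W_s} = \phi'_{W_s}$ as morphisms of families.

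Taking the union of the neighborhoods $W_s$ as $s$ ranges over $W$ then promotes these local agreements into an equality of morphisms of families $(\cF_2)_W \to \cF_1$, completing the argument. I expect no serious obstacle: the only subtlety worth checking is that the Cartesian condition built into \cref{families sec} genuinely produces $D_2$-isomorphisms $\tilde\phi,\tilde\phi'$ to $h^*(\cF_1)$, which is a direct consequence of the universal property of the fiber product.
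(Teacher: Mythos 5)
Your proposal is correct and follows essentially the same route as the paper: factor both morphisms through the pullback $h^*(\cF_1)$ to obtain $D_2$-isomorphisms, form the resulting $D_2$-automorphism $\psi$ of $\cF_2$, and apply \cref{rigidity} to identify $W$ as open and conclude $\phi_W = \phi'_W$. Your pointwise application of \cref{rigidity} followed by taking the union of the neighborhoods $W_s$ is just a slightly more explicit phrasing of the same argument.
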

\begin{proof}
We know that a pullback $\eta \colon h^*(\cF_1) \to \cF_1$ of the family of compact complex manifolds $\cF_1$ by $h$ exists; see \cref{pullback sec}. Thus $\phi$ and $\phi'$ factorize uniquely as $\phi = \eta\circ \bar\phi$ and $\phi' = \eta\circ \bar\phi'$, respectively, where $\bar\phi$ and $\bar\phi'$ are $D_2$-isomorphisms. Define $\psi \coloneqq \bar\phi^{-1} \circ \bar\phi'$. Then $\psi$ is a $D_2$-automorphism of $\cF_2$ and $W$ is precisely the set of points $s \in D_2$ for which $\psi$ induces the identity on the fiber over $s$. Applying \cref{rigidity}, we see that $W$ is an open subset of $D_2$ and $\psi_W \colon (\cF_2)_W \to (\cF_2)_W$ is the identity. The latter implies that $\bar\phi_W = {\bar\phi'}_W$, which subsequently implies that $\phi$ and $\phi'$ restrict to the same morphism $(\cF_2)_W \to \cF_1$.
\end{proof}

The goal of \cref{universal map sec} is to generalize \cref{universality intro} to situations where not a single compact complex manifold $X$ is given but a family of compact complex manifolds $\cF$ over an arbitrary base space $S$. Precisely we prove the following; when $S = \pt$, we recover the well-known facts of \cref{universality intro}.

\begin{theo}\label{universal map}
Let $\cF$, $\cF_1$, and $\cF_2$ be families of compact complex manifolds over $S$, $D_1$, and $D_2$, respectively, and $\iota_k \colon \cF \to \cF_k$ be a morphism of families over $b_k$ for $k \in \set{1,2}$. Assume that $\Hsh0{\cF(s)}{\Theta_{\cF(s)}}=\set0$ and that $\cF_1$ is semi-universal at $b_1(s)$ for all $s \in S$. Moreover assume that $b_2 \colon S\to D_2$ is an embedding into a second-countable space $D_2$. Then:
\begin{enumtheo}
\item\label{universal ex} There exists an open subspace $W$ of $D_2$ together with a morphism of families $\phi \colon (\cF_2)_W \to \cF_1$ such that $b_2(S) \subset W$ and $\iota_1 = \phi \circ \iota_2$.
\item\label{universal un} When $W'$ is another open subspace of $D_2$ and $\phi' \colon (\cF_2)_{W'} \to \cF_1$ is a morphism of families such that $b_2(S) \subset W'$ and $\iota_1 = \phi'\circ\iota_2$, then $\phi$ and $\phi'$ agree on an open subspace $W''$ of $W\cap W'$ with $b_2(S) \subset W''$.
\end{enumtheo}
\end{theo}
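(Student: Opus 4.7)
The plan is to prove (ii) first and then use it to glue local solutions in (i). Both parts rest on the observation that semi-universality of $\cF_1$ at $b_1(s)$ combined with $\Hsh0{\cF(s)}{\Theta_{\cF(s)}}=\set0$ turns $(\cF_1,\iota_1(s))$ into a \emph{universal} deformation of $\cF(s)$: the germ at $b_2(s)$ of the base map arising from any morphism of deformations into $(\cF_1,\iota_1(s))$ is uniquely determined. Replacing $D_2$ by the open subspace where $\Hsh0{\cF_2(s')}{\Theta_{\cF_2(s')}}=\set0$---which contains $b_2(S)$ by upper semicontinuity---we may apply \cref{rigidity} and \cref{eta=eta' coro} freely.

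For (ii), fix $s \in S$. The restrictions of $\phi$ and $\phi'$ near $b_2(s)$ yield two morphisms from the deformation $(\cF_2,\iota_2(s))$ of $\cF(s)$ into $(\cF_1,\iota_1(s))$, both extending the fiber identification; by universality their base maps $h,h'$ agree on an open neighborhood of $b_2(s)$. Taking unions over $s \in S$ produces an open $V \subset W \cap W'$ with $b_2(S) \subset V$ and $\rest{h}V = \rest{h'}V$. Then $\phi|_V$ and $\phi'|_V$ are morphisms of families over a common base map which coincide on every fiber above $b_2(S)$---because $\phi(b_2(s)) \circ \iota_2(s) = \iota_1(s) = \phi'(b_2(s)) \circ \iota_2(s)$ and $\iota_2(s)$ is a biholomorphism---so \cref{eta=eta' coro} delivers an open $W'' \subset V$ with $b_2(S) \subset W''$ on which $\phi = \phi'$.

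For (i), completeness of $(\cF_1,\iota_1(s))$ applied to $(\cF_2,\iota_2(s))$ furnishes an open $U_s \ni b_2(s)$ and a morphism $\phi_s \colon (\cF_2)_{U_s} \to \cF_1$ with $\iota_1(s) = \phi_s(b_2(s)) \circ \iota_2(s)$. Applying the argument of (ii) to the two morphisms $\iota_1$ and $\phi_s \circ \iota_2$ of $\cF_{b_2^{-1}(U_s)}$ into $\cF_1$, which coincide at the fiber above $s$, produces an open neighborhood $V_s$ of $s$ in $S$ with $\iota_1|_{V_s} = \phi_s \circ \iota_2|_{V_s}$. Since $b_2$ is an embedding, we may shrink $U_s$ to an open $\tilde U_s$ with $b_2^{-1}(\tilde U_s) = V_s$; then $\phi_s(b_2(t)) = \iota_1(t) \circ \iota_2(t)^{-1}$ for every $t \in V_s$. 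Second-countability of $D_2$ lets us pass to a countable subcover $\{\tilde U_{s_n}\}_{n \in \NN}$ of an open neighborhood of $b_2(S)$. On each nonempty overlap $\tilde U_{s_i} \cap \tilde U_{s_j}$ both $\phi_{s_i}$ and $\phi_{s_j}$ extend $\iota_1 \circ \iota_2^{-1}$ along $b_2(S) \cap \tilde U_{s_i} \cap \tilde U_{s_j}$, so part (ii) yields an open subset of this overlap, containing $b_2(S) \cap \tilde U_{s_i} \cap \tilde U_{s_j}$, on which $\phi_{s_i}$ and $\phi_{s_j}$ agree. Letting $W$ be the set of points in $\bigcup_n \tilde U_{s_n}$ that admit a neighborhood on which $\phi_{s_i} = \phi_{s_j}$ for every $(i,j)$ with the point in $\tilde U_{s_i} \cap \tilde U_{s_j}$, one checks that $W$ is open, contains $b_2(S)$, and that the $\phi_{s_n}$ glue on $W$ to the desired $\phi$.

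The main obstacle is the gluing step in (i): the $\phi_s$ produced by completeness initially match $\iota_1$ only at the single fiber above $s$, and weaving them into a single morphism requires the combined power of the embedding hypothesis on $b_2$ (to arrange $b_2^{-1}(\tilde U_s) = V_s$), the second-countability of $D_2$ (to select a countable subcover and avoid transfinite bookkeeping), and the uniqueness statement (ii) (to control agreement on overlaps near $b_2(S)$).
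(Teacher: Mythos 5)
Your parts (ii) and the pointwise step of (i) run exactly along the paper's lines (completeness plus universality plus the rigidity statement of \cref{eta=eta' coro}, with the embedding hypothesis used to convert the neighborhood $V_s\subset S$ into $b_2^{-1}(\tilde U_s)$, and the semicontinuity shrink so that \cref{eta=eta' coro} applies to $\cF_2$). The genuine gap is in the final gluing. You define $W$ as the set of points of $\bigcup_n\tilde U_{s_n}$ admitting a neighborhood on which $\phi_{s_i}=\phi_{s_j}$ for every pair $(i,j)$ whose overlap contains the point, and assert that ``one checks'' $W$ is open and contains $b_2(S)$. Neither is justified, and this is exactly the delicate point. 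First, a point $x=b_2(s)$ may lie in infinitely many of the $\tilde U_{s_n}$ (a countable subcover obtained from second countability is Lindel\"of-type data and is in no way locally finite), so although for each single pair $(i,j)$ the uniqueness argument gives agreement on \emph{some} neighborhood of $x$, there need be no single neighborhood working for all infinitely many pairs; hence $b_2(S)\subset W$ can fail. Second, $W$ need not be open: for $y$ close to $x$ there may be new pairs $(k,l)$ with $y\in\tilde U_{s_k}\cap\tilde U_{s_l}$ but $x\notin\tilde U_{s_k}$ (boundary effect of open sets), and for such pairs you only know agreement on some open set containing $b_2(S)\cap\tilde U_{s_k}\cap\tilde U_{s_l}$, which need not reach $y$. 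So the ``one checks'' sentence hides precisely the obstruction the paper isolates in \cref{universal map idea}.

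The paper's remedy, which your sketch is missing, is to use second countability not to extract a countable subcover but to get paracompactness (regular Lindel\"of $\Rightarrow$ paracompact) and hence a locally finite \emph{shrinking} $(V_i)$ with $\closure{V_i}\subset U_i$; one then defines the good set via the index sets $I(x)=\setb{i}{x\in\closure{V_i}}$, where local finiteness makes $I(x)$ finite and the closures prevent new indices from appearing on a small neighborhood of $x$. This is \cref{partial gluing} (the Kashiwara--Schapira-type argument), and note its conclusion is correspondingly weaker than what you claim: the glued morphism agrees with $\phi_{s_i}$ only on $W\cap V_i$, not on $W\cap\tilde U_{s_i}$, which is still enough to verify $\iota_1=\phi\circ\iota_2$ because the $V_i$ cover a neighborhood of $b_2(S)$. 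To repair your proof, replace the countable-subcover step by this locally finite shrinking and germ-equality bookkeeping; the rest of your argument then goes through and coincides with the paper's proof.
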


\subsection{}\label{universal map idea}
The idea of the proof of \cref{universal map} is straightforward. Pick a point $s \in S$. Then we dispose of biholomorphisms $\iota_k(s) \colon \cF(s) \to \cF_k(b_k(s))$ for $k \in \set{1,2}$ and hence of a biholomorphism
\begin{equation}\label{fiber map}
\iota_1(s) \circ \iota_2(s)^{-1} \colon \cF_2(b_2(s)) \to \cF_1(b_1(s)).
\end{equation}
Exploiting the completeness of the family $\cF_1$ at $b_1(s)$, we obtain a morphism of families $\psi \colon (\cF_2)_U \to \cF_1$ defined on an open neighborhood $U$ of $b_2(s)$ in $D_2$ which induces the isomorphism of \cref{fiber map}. Furthermore, exploiting the universality of the family $\cF_1$ at $b_1(s)$ in conjunction with \cref{eta=eta' coro}, we see that $\iota_1 = \psi \circ \iota_2$ holds on an open neighborhood of $s$ in $b_2^{-1}(U) \subset S$. Thus locally at the point $b_2(s)$ we have achieved what we wanted. If we are able to glue the pairs $(U,\psi)$ over an open neighborhood $W$ of $b_2(S)$ in $D_2$, we are done.

Unfortunately though there is a catch in the gluing: Given two pairs $(U,\psi)$ and $(U',\psi')$ as in the previous paragraph, the morphisms $\psi$ and $\psi'$ need not agree on the overlap $U\cap U'$. As a matter of fact, the universality of the family $\cF_1$ and \cref{eta=eta' coro} imply only that $\psi$ and $\psi'$ agree on an open subset $V$ of $U\cap U'$ containing all points $b_2(s)$ with $s \in b_2^{-1}(U\cap U')$. If $V \ne U\cap U'$, then $\psi$ and $\psi'$ will simply not glue to a morphism $(\cF_2)_{U\cup U'} \to \cF_1$.

The following \namecref{partial gluing} shows a way out of this predicament. The trick is to first pass from the open cover $\set{U,U'}$ of $U\cup U'$ to a suitably refined open cover---namely, to a so-called \emph{shrinking} of $\set{U,U'}$. The restrictions of $\psi$ and $\psi'$ to the refined open cover will then glue over an open set that contains all points of $b_2(S) \cap (U\cup U')$, which suffices for our purposes. \Cref{partial gluing} is inspired by an argument of Kashiwara's and Schapira's \cite[102--103]{KS94}.

\begin{lemm}\label{partial gluing}
Let $X$ be a topological space, $(U_i)_{i\in I}$ and $(V_i)_{i\in I}$ be indexed open covers of $X$, $\sF$ be a sheaf of sets on $X$, and $(s_i)_{i \in I}$ be an indexed family of sections $s_i \in \sF(U_i)$. Assume that the family of closed subsets $(\closure{V_i})_{i\in I}$ of $X$ is locally finite and satisfies $\closure{V_i} \subset U_{i}$ for all $i \in I$. Define
\[
A \coloneqq \setb*{x\in X}{\forall i,j \in I:x\in U_i\cap U_j \implies [s_i]_x = [s_j]_x}
\]
where $[\,\cdot\,]_x$ refers to taking the germ at $x$ in the sheaf $\sF$.
Then there exists an open subset $W$ of $X$ together with a section $t \in \sF(W)$ such that $A\subset W$ and
\[
\rest t{W\cap V_i} = \rest{s_{i}}{W\cap V_i}
\]
for all $i \in I$.
\end{lemm}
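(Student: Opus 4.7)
The plan is to construct, for each $x \in A$, a sufficiently small open neighborhood $N_x$ on which a common restriction of the ``relevant'' sections $s_i$ is unambiguously defined, and then to glue these local data into one section $t$ on $W \coloneqq \bigcup_{x \in A} N_x$. The key observation is that the shrinking $\closure{V_i} \subset U_i$ together with the local finiteness of $(\closure{V_i})_{i \in I}$ bounds the indices relevant at $x$ to the finite, nonempty set $I(x) \coloneqq \setb{i \in I}{x \in \closure{V_i}}$; this set is nonempty because the $V_i$ cover $X$, and it is finite by local finiteness of the $\closure{V_i}$.

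Concretely, for each $x \in A$ I would pick an open neighborhood $N_x$ of $x$ satisfying: (i)~$N_x \cap \closure{V_j} = \emptyset$ for all $j \notin I(x)$; (ii)~$N_x \subset \bigcap_{i \in I(x)} U_i$; and (iii)~$s_i|_{N_x} = s_j|_{N_x}$ for all $i, j \in I(x)$. Such an $N_x$ is available: local finiteness yields a neighborhood meeting only finitely many $\closure{V_j}$, and the excess indices are ruled out by intersecting with the open sets $X \setminus \closure{V_j}$ for $j \notin I(x)$; condition (ii) then holds after further intersecting with the finite intersection $\bigcap_{i \in I(x)} U_i$, which contains $x$ since $x \in \closure{V_i} \subset U_i$ for $i \in I(x)$; and (iii) is achieved by intersecting with the finitely many neighborhoods that witness the germ equalities $[s_i]_x = [s_j]_x$ guaranteed by $x \in A$. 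I then set $t_x \coloneqq s_i|_{N_x} \in \sF(N_x)$ for any $i \in I(x)$, which is well-defined by (iii).

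The main obstacle---and really the whole point of passing to the refinement $(V_i)$---is to show that the $t_x$ glue on overlaps $N_x \cap N_{x'}$. Given any $y \in N_x \cap N_{x'}$, the fact that $(V_i)_{i \in I}$ covers $X$ produces some $k$ with $y \in V_k \subset \closure{V_k}$; then $y$ witnesses $N_{x'} \cap \closure{V_k} \ne \emptyset$, so property (i) for $N_{x'}$ forces $k \in I(x')$, and symmetrically $k \in I(x)$. Hence $k \in I(x) \cap I(x')$, and both $t_x$ and $t_{x'}$ coincide near $y$ with $s_k$, so their germs at $y$ agree. Varying $y$ and invoking the sheaf property of $\sF$ yields $t_x|_{N_x \cap N_{x'}} = t_{x'}|_{N_x \cap N_{x'}}$, so the $t_x$ glue to a section $t \in \sF(W)$.

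Finally, $A \subset W$ since $x \in N_x$ for every $x \in A$. For the identity $t|_{W \cap V_i} = s_i|_{W \cap V_i}$, given $y \in W \cap V_i$ I would choose $x \in A$ with $y \in N_x$; since $y \in V_i \subset \closure{V_i}$, property (i) forces $i \in I(x)$, so $t|_{N_x} = t_x = s_i|_{N_x}$ and in particular $[t]_y = [s_i]_y$. The sheaf property once more delivers the desired equality. The delicate point throughout is the overlap compatibility, which hinges entirely on extracting a common index $k \in I(x) \cap I(x')$ via the covering property of $(V_i)$ combined with (i).
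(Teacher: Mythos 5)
Your proof is correct and takes essentially the same approach as the paper: both arguments turn on the finite index set $I(x)=\setb{i\in I}{x\in\closure{V_i}}$, on using local finiteness to shrink a neighborhood of $x$ away from the irrelevant $\closure{V_j}$, and on the observation that $y\in V_k$ forces $k$ into the relevant index set, so that germ agreement (from membership in $A$) propagates and the locality plus gluing axioms of $\sF$ finish the job. The only cosmetic difference is that you build $W$ bottom-up as $\bigcup_{x\in A}N_x$ and glue the locally chosen sections $t_x$, whereas the paper defines $W$ intrinsically as the locus where the germs $[s_i]_x$, $i\in I(x)$, all agree and glues the restrictions $\rest{s_i}{W\cap V_i}$ over the cover $(W\cap V_i)_{i\in I}$.
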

\begin{proof}
For every point $x \in X$ define $I(x) \coloneqq \setb{k \in I}{x \in \closure{V_k}}$. Further define
\[
W \coloneqq \setb*{x \in X}{\forall i,j \in I(x): [s_{i}]_x = [s_{j}]_x}.
\]
We contend that $W$ is an open subset of $X$ with $A \subset W$. Indeed, take $x \in A$. Then $x \in X$. For all $i,j \in I(x)$ we have that $x \in \closure{V_i} \cap \closure{V_j}$, whence $x \in U_{i}\cap U_{j}$ so that $[s_{i}]_x = [s_{j}]_x$ by the definition of $A$. This implies that $x \in W$.

Now fix a point $x \in W$. Since $(\closure{V_i})_{i\in I}$ is a locally finite indexed family of subsets of $X$, there exists an open subset $N\subset X$ containing $x$ such that the set $J \coloneqq \setb{i \in I}{\exists z \in \closure{V_i}\cap N}$ is finite. Note that $I(y) \subset J$ for all $y \in N$; in particular $I(x)$ is finite. Therefore---use induction on the cardinality of $I(x)$---there exists an open neighborhood $U' \subset \bigcap_{i \in I(x)}U_{i}$ of $x$ such that $\rest{s_{i}}{U'} = \rest{s_{j}}{U'}$ for all $i,j \in I(x)$. Defining $M \coloneqq N \setminus \bigcup_{j \in J\setminus I(x)}\closure{V_j}$, we observe that $M$ is an open subset of $X$ with $x \in M$. Moreover for every $y \in M$ we have that $I(y) \subset I(x)$. Thus, $M \cap U'$ is an open neighborhood of $x$ in $X$ such that $[s_{i}]_y = [s_{j}]_y$ for all $y \in M\cap U'$ and all $i,j \in I(y)$. In consequence $M\cap U' \subset W$, and since $x \in W$ was arbitrary, we see that $W$ is an open subset of $X$ as claimed.

Since $(V_i)_{i \in I}$ is an open cover of $X$, it is clear that $(W_i)_{i\in I}$ with $W_i = W\cap V_i$ is an open cover of $W$. Define the indexed family $(t_i)_{i\in I}$ by $t_i = \rest{s_{i}}{W_i}$. Let $i,j \in I$ be arbitrary indices and let $y \in W_i\cap W_j$ be a point. Then $y \in \closure{V_i}$ and $y \in \closure{V_j}$ so that $i,j \in I(y)$. Moreover, $y\in W$ so that $[s_{i}]_y = [s_{j}]_y$ by the definition of $W$. Since $[t_i]_y = [s_{i}]_y$, and likewise for $j$, we obtain that $[t_i]_y = [t_j]_y$. Given that $\sF$ is a sheaf of sets on $X$, it satisfies the locality sheaf axiom and we deduce that $\rest{t_i}{W_i\cap W_j} = \rest{t_j}{W_i\cap W_j}$. Employing the gluing sheaf axiom for $\sF$, this implies the existence of a section $t \in \sF(W)$ such that $\rest t{W_i} = t_i$ for all $i \in I$.
\end{proof}

\subsection{Proof of \cref*{universal map}}\label{universal map pf}
We proceed in steps, following the strategy mapped out in \cref{universal map idea}.

\subsubsection{Preparations}\label{universal map pf-1}
We define a presheaf of sets $\sH$ on $D_2$ by means of the following rules: $\sH(U)$ is, for every open subset $U$ of $D_2$, the set of morphisms of families from $(\cF_2)_U$ to $\cF_1$---that is,
\[
\sH(U) = \setb\psi{\psi \colon (\cF_2)_U \to \cF_1}.
\]
For every two open subsets $U$ and $V$ of $D_2$ with $V\subset U$ the restriction map $\rho^U_V\colon \sH(U) \to \sH(V)$ of $\sH$ satisfies $\rho^{U}_{V}(\phi) = \rest\phi V = \phi \circ \eta$ where $\eta \colon (\cF_2)_{V} \to (\cF_2)_U$ denotes the canonical injection of families. Observe that $\sH$ is not only a presheaf of sets on $D_2$ but a sheaf of sets on $D_2$.

\newcommand{\indphi}[2]{\iota_{#1,#2}}	
We define $I$ to be the set of all pairs $(U,\psi)$ where $U$ is an open subset of $D_2$ and $\psi \in \sH(U)$ is an element such that $\indphi1U = \psi \circ \indphi2{U}$ where
\[
\indphi1{U} = \rest{\iota_1}{b_2^{-1}(U)} \colon \cF_{b_2^{-1}(U)} \to \cF_1 \qquad \text{and} \qquad \indphi2{U} \colon \cF_{b_2^{-1}(U)} \to (\cF_2)_U
\]
denote the morphisms of families induced by $\iota_1$ and $\iota_2$, respectively. The notation $\indphi1{U}$ must not be confused with the notation $(\iota_1)_U$ of \cref{restriction sec}.

By the semicontinuity theorem \cite{Rie70}, since the family of compact complex manifolds $\cF_2$ is given by a proper holomorphic submersion, the function
\[
d_0 \colon D_2 \to \NN, \qquad d_0(y) = \hdim0{\cF_2(y)}{\Theta_{\cF_2(y)}},
\]
is upper semicontinuous. Thus according to the assumptions in \cref{universal map} there exists an open neighborhood $N$ of $b_2(S)$ in $D_2$ such that $d_0(y) = 0$ for all $y \in N$. Without loss of generality we assume that $N = D_2$.

\subsubsection{Proof of the uniqueness}\label{uniqueness of ext}
Let $(U,\psi)$ and $(U',\psi')$ be two elements of $I$. Then $\indphi1{U} = \psi \circ \indphi2{U}$ and likewise $\indphi1{U'} = \psi' \circ \indphi2{U'}$. Let $s \in b_2^{-1}(U\cap U')$ be an arbitrary point. Then
\[
\psi(b_2(s)) = \psi'(b_2(s)) \colon \cF_2(b_2(s)) \to \cF_1(b_1(s)).
\]
Moreover, by \cref{universality intro} and the assumptions of \cref{universal map}, the family $\cF_1$ is universal at the point $b_1(s)$. Therefore, when $\psi$ and $\psi'$ are morphisms of families over $h$ and $h'$, respectively, we see that $h$ and $h'$ agree on an open neighborhood of $b_2(s)$ in $U\cap U'$. Since $s$ was arbitrary, we deduce the existence of an open subset $V \subset U\cap U'$ such that $\rest h{V} = \rest{h'}{V}$ and $b_2(b_2^{-1}(U\cap U')) \subset V$. Now applying \cref{eta=eta' coro} to the restrictions $\rest\psi V$ and $\rest{\psi'}V$, we deduce the existence of an open subset $W$ of $V$ such that $b_2(b_2^{-1}(U\cap U')) \subset W$ and $\rest\psi W = \rest{\psi'}W$.

When $b_2(S) \subset U$ and $b_2(S) \subset U'$, then $b_2^{-1}(U\cap U') = S$ so that $b_2(S) \subset W$. Thus the preceding argument proves \cref{universal un} of \cref{universal map}.

\subsubsection{Existence at points}\label{ext at points}
Let $s \in S$ be an arbitrary point. We contend the existence of a pair $(V,\phi) \in I$ such that $b_2(s) \in V$.

Indeed, since the family $\cF_1$ is complete at $b_1(s)$, there exists an open subset $U$ of $D_2$ containing $b_2(s)$ as well as a morphism of families $\psi \colon (\cF_2)_U \to \cF_1$ so that \[\iota_1(s) = \psi(b_2(s)) \circ \iota_2(s).\] Applying \cref{eta=eta' coro} in conjunction with the fact that $\cF_1$ is universal at $b_1(s)$, we see there exists an open subset $T \subset b_2^{-1}(U)$ such that $s \in T$ and
\[
\rest{\iota_1}T = \rest{\indphi1{U}}T = \rest{\psi \circ \indphi2{U}}T \colon \cF_T \to \cF_1.
\]
As the holomorphic map $b_2 \colon S\to D_2$ is an embedding by assumption, $b_2$ induces a homeomorphism between $S$ and the subspace $b_2(S)$ of $D_2$. This implies that $T = b_2^{-1}(V)$ for an open subset $V$ of $U$. Hence $\indphi1{V} = \phi \circ \indphi2{V}$ where $\phi \coloneqq \rest\psi V$, which proves our claim.

\subsubsection{Gluing and global existence}\label{shrinking}
Define $(U_i)_{i\in I}$ and $(\psi_i)_{i\in I}$ to be the indexed families given by the assignments $(U,\psi) \mapsto U$ and $(U,\psi) \mapsto \psi$, respectively, and define $Y$ to be the open subspace of $D_2$ induced on the union $\bigcup_{i\in I} U_i$. Then $(U_i)_{i\in I}$ is an indexed open cover of $Y$. Being a second-countable complex space, $Y$ is regular and Lindelöf, whence paracompact \cite[][Theorem 41.5]{Munkres}. Thus there exists a locally finite indexed open cover $(V_i)_{i\in I}$ of $Y$ such that $\closure{V_i} \subset U_i$ for all $i \in I$, where the closure is taken in $Y$ \cite[][Lemma 41.6]{Munkres}.

By \cref{ext at points} we know that $b_2(S) \subset Y$. Let $s \in S$ be an arbitrary point and $i,j \in I$ be elements such that $b_2(s) \in U_i\cap U_j$. Then according to \cref{uniqueness of ext} there exists an open subset $U' \subset U_i\cap U_j$ such that $b_2(s) \in U'$ and $\rest{\psi_i}{U'} = \rest{\psi_j}{U'}$. In other words, the germs of $(U_i,\psi_i)$ and $(U_j,\psi_j)$ at $b_2(s)$ in the sheaf $\sH$, equivalently in the sheaf $\rest \sH Y$, agree. Therefore \cref{partial gluing} implies the existence of an open subset $W \subset Y$ and an element $\phi \in \sH(W)$ such that $b_2(S) \subset W$ and
\[
\rest\phi{W\cap V_i} = \rest{\psi_i}{W\cap V_i}
\]
for all $i \in I$.

\subsubsection{Conclusion}
We contend that $\iota_1 = \phi \circ \indphi2{W}$, which proves \cref{universal ex} of \cref{universal map} and is actually equivalent to saying that $(W,\phi) \in I$. We note that ${\indphi1{U_i} = \psi_i \circ \indphi2{U_i}}$ for all $i \in I$. Thus
\begin{align*}
\rest{\iota_1}{b_2^{-1}(W\cap V_i)} &= \rest{\indphi1{U_i}}{b_2^{-1}(W\cap V_i)} = \rest{\paren*{\psi_i \circ \indphi2{U_i}}}{b_2^{-1}(W\cap V_i)} = \rest{\psi_i}{W\cap V_i} \circ \indphi2{W\cap V_i}\\ &= \rest\phi{W\cap V_i} \circ \indphi2{W\cap V_i} = \rest{(\phi \circ \indphi2W)}{b_2^{-1}(W\cap V_i)}
\end{align*}
for all $i\in I$.
In addition, since $b_2(S) \subset W$, since $(V_i)_{i\in I}$ is an indexed open cover of $Y$, and since $W\subset Y$, we see that
\[
S = b_2^{-1}(W) = b_2^{-1}\paren*{\bigcup_{i\in I}(W\cap V_i)} = \bigcup_{i\in I} b_2^{-1}(W\cap V_i)
\]
and our claim follows.\qed

\begin{rema}\label{torellilike}
Let $(\cF',\mu')$ be a $\Lambda$-marked family of ihs manifolds over $S'$ and $s\in S'$ a point. Then $\cF'$ is semi-universal at $s$ if and only if the period map $p' \colon S' \to \pdom\Lambda$ of $(\cF',\mu')$ is a local biholomorphism at $s$.
\end{rema}
\begin{proof}
By \cref{local Torelli} there exists a semi-universal deformation $(\cX,(i,j))$ of the fiber $\cF'(s)$ where $\cX$ is a family of ihs manifolds over a simply connected complex manifold $S$. Thus there is an open subspace $U \subset S'$ and a morphism of families $(g,h) \colon \cF'_U \to \cX$ such that $s \in U$ and $h(s) = j(0)$. Using \cref{marking exists,induced marking}, we can assume that $(g,h) \colon (\cF',\mu')_U \to (\cX,\mu)$ is a morphism of $\Lambda$-marked families for a $\Lambda$-marking $\mu$ of $\cX$. As a consequence, if $p \colon S \to \pdom\Lambda$ denotes the period map of $(\cX,\mu)$, we know that $\rest{p'}U = p \circ h$ by \cref{pm functorial}.

By \cref{local Torelli} we see that $p$ is a local biholomorphism at $j(0)$. Therefore $p'$ is a local biholomorphism at $s$ if and only if $h \colon U \to S$ is a local biholomorphism at $s$. The latter is clearly equivalent to $\cF'$ being semi-universal at $s$.
\end{proof}

\Cref{torellilike} allows for a marked family version of \cref{universal map} which we need in \cref{extension sec}. We formulate only the existence part---that is, \cref{universal ex}---as the uniqueness part would only be weaker than that of \cref{universal map}.

\begin{coro}\label{universal map ihs}
Let $\Lambda$ be a lattice, $\cF$, $\cF_1$, and $\cF_2$ be $\Lambda$-marked families of ihs manifolds over $S$, $D_1$, and $D_2$, respectively, and $\iota_k \colon \cF \to \cF_k$ be morphisms of $\Lambda$-marked families over $b_k$ for $k \in \set{1,2}$. Assume that the period map of $\cF_1$ is a local biholomorphism $D_1 \to \pdom\Lambda$ and that $b_2 \colon S\to D_2$ is an embedding into a second-countable space $D_2$. Then there exists an open subset $W \subset D_2$ as well as a morphism of $\Lambda$-marked families $\phi \colon (\cF_2)_W \to \cF_1$ such that $b_2(S) \subset W$ and $\iota_1 = \phi \circ \iota_2$.
\end{coro}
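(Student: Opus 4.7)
The strategy is to reduce to the unmarked statement \cref{universal map} and then handle the markings by a shrinking argument. Concretely, I first produce $\phi$ as a morphism of families (forgetting markings) over some open $W \subset D_2$, and then cut $W$ down to an open subset on which $\phi$ also preserves the $\Lambda$-markings.

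To apply \cref{universal map} I must verify its hypotheses. For each $s\in S$ the fiber $\cF(s)$ is ihs, so contraction with its symplectic form gives $\Theta_{\cF(s)}\isom\Omega^1_{\cF(s)}$; together with the Hodge decomposition and the simple-connectedness of $\cF(s)$, this forces $\hdim0{\cF(s)}{\Theta_{\cF(s)}} = 0$. The assumption that the period map of $\cF_1$ is a local biholomorphism to $\pdom\Lambda$ implies, via \cref{torellilike}, that $\cF_1$ is semi-universal at every point $b_1(s)$. Combined with the embedding hypothesis on $b_2$, \cref{universal ex} of \cref{universal map} yields an open subspace $W\subset D_2$ with $b_2(S)\subset W$ and a morphism of families $\phi\colon(\cF_2)_W\to\cF_1$ over some $h\colon W\to D_1$ such that $\iota_1 = \phi\circ\iota_2$ and $b_1 = h\circ b_2$.

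Let $\nu$ be the restriction to $W$ of the given marking of $\cF_2$, and let $\mu'$ denote the unique marking of $(\cF_2)_W$ for which $\phi$ becomes a morphism of $\Lambda$-marked families, furnished by \cref{induced marking}. The composite $\alpha \coloneqq \nu\circ(\mu')^{-1}$ is an automorphism of the constant sheaf $\csh\Lambda W$; since $D_2$ is locally connected, $\alpha$ corresponds to a locally constant function $W\to\Aut(\Lambda)$, so the set $W''\coloneqq\setb{w\in W}{\alpha_w = \id\Lambda}$ is open in $W$.

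The only delicate step---and the main potential obstacle, since \emph{a priori} $\nu$ and $\mu'$ could differ by a nontrivial lattice automorphism on components of $W$ disjoint from $b_2(S)$---is to verify $b_2(S)\subset W''$. Fix $s\in S$; the fiberwise identity $\iota_1(s) = \phi(b_2(s))\circ\iota_2(s)$ derived from $\iota_1 = \phi\circ\iota_2$ gives $\iota_1(s)^* = \iota_2(s)^*\circ\phi(b_2(s))^*$ on integral second cohomology. The fiberwise compatibility of $\iota_1$ and $\iota_2$ with markings, as spelled out in the proof of \cref{induced marking}, reads $\mu_s\circ\iota_k(s)^* = \mu_{k,b_k(s)}$ for $k \in \set{1,2}$, where $\mu$, $\mu_1$, $\mu_2$ denote the given markings. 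Combining these identities yields $\mu_{2,b_2(s)}\circ\phi(b_2(s))^* = \mu_{1,b_1(s)}$, which is precisely the relation that uniquely characterizes $\mu'_{b_2(s)}$ at the fiber level; hence $\mu'_{b_2(s)} = \mu_{2,b_2(s)} = \nu_{b_2(s)}$ and $\alpha_{b_2(s)} = \id\Lambda$. Restricting $\phi$ to $W''$ then produces the required morphism of $\Lambda$-marked families.
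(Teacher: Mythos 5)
Your proof is correct and follows essentially the same route as the paper: apply \cref{universal map} to the underlying unmarked families, use \cref{induced marking} to get the marking for which $\phi$ is marked, check fiberwise agreement of the two markings over $b_2(S)$ via the relations $\mu_s\circ\iota_k(s)^*=\mu_{k,b_k(s)}$, and then shrink by a locally-constant/connectedness argument. The only cosmetic difference is that you cut $W$ down to the open locus where the discrepancy automorphism of $\csh\Lambda W$ is the identity, whereas the paper restricts to the connected components meeting $b_2(S)$ and invokes \cref{markings coincide}---the same idea, inlined.
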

\begin{proof}
Denote by $\cF'$, $\cF_1'$, and $\cF_2'$ the families underlying the marked families $\cF$, $\cF_1$, and $\cF_2$, respectively.
Since the period map of $\cF_1$ is a local biholomorphism, the family of compact complex manifolds $\cF_1'$ is semi-universal at every point $y \in D_1$ by \cref{torellilike}. Thus \cref{universal map} implies the existence of an open subset $\tilde{W} \subset D_2$ and a morphism of families $\tilde\phi \colon (\cF_2')_{\tilde W} \to \cF_1'$ such that $\iota_1 = \tilde\phi \circ \iota_2$.

By \cref{induced marking} we know there exists a unique marking $\nu$ of the family of ihs manifolds $(\cF_2')_{\tilde W}$ for which $\tilde{\phi}$ becomes a morphism of marked families between $((\cF_2')_{\tilde W},\nu)$ and $\cF_1$. Since $\iota_1 = \tilde\phi \circ \iota_2$ and since $\iota_k \colon \cF \to \cF_k$ are morphisms of marked families for $k \in \set{1,2}$, we see that $\nu$ and the marking of $\cF_2$ induce the same marking on the fiber $\cF_2'(b_2(s))$ for all $s \in S$. Define $W$ to be the union of all connected components $C$ of $\tilde W$ for which there exists a point in $b_2(S) \cap C$. Then $W$ is open in $D_2$, we know that $b_2(S) \subset W$, and by \cref{markings coincide} the restriction $\phi \coloneqq \rest{\tilde{\phi}}{W} \colon (\cF_2)_W \to \cF_1$ is a morphism of marked families with $\iota_1 = \phi \circ \iota_2$.
\end{proof}

\begin{rema}\label{markings coincide}
Let $\Lambda$ be a lattice, $\cF$ be a family of ihs manifolds over a connected space $S$, and $\mu$ and $\nu$ be two $\Lambda$-markings of $\cF$. Let $s \in S$ be a point and assume that $\mu$ and $\nu$ induce the same marking
\[
\mu_s = \nu_s \colon \Hsh2{\cF(s)}\ZZ \to \Lambda
\]
on the fiber $\cF(s)$ in the sense of \cref{ihs family sec}. Then $\mu = \nu$.

Indeed, writing $\cF = (X,S,f)$, the sheaf of abelian groups $\Rsh2f{\csh\ZZ X}$ is constant, for it is isomorphic to $\csh\Lambda S$ by virtue of $\mu$ (or $\nu$). Thus, as the space $S$ is connected, the canonical map
\[
\paren*{\Rsh2f{\csh\ZZ X}}(S) \to \paren*{\Rsh2f{\csh\ZZ X}}_s
\]
from global sections to the stalk at $s$ is bijective. In consequence the global section components of the morphisms of sheaves $\mu$ and $\nu$ coincide. Employing the same argument again, we conclude that $\mu_t = \nu_t$ for all $t \in S$, which implies our claim.
\end{rema}

\section{Proof of the extension theorem}
\label{extension sec}

\subsection{}\label{extension intro}
In what follows we prove \cref{ihs extension thm}.
We assume throughout \cref{extension sec} that a lattice $\Lambda$ and a $\Lambda$-marked family of ihs manifolds $\cF$ over a complex space $S$ are given such that the period map $h \colon S\to \pdom\Lambda$ of $\cF$ is an embedding. In order to simplify the notation, we assume, without loss of generality, that $S \subset \pdom\Lambda$ is a complex subspace and $h$ is the corresponding canonical injection. That way we need not distinguish between the points $s$ and $h(s)$ for $s \in S$.

The construction of the extension $\tilde\cF$ of the marked family $\cF$ somewhat parallels the proof of \cref{universal map}, compare \cref{universal map idea}: first we produce suitable extensions locally at every point of $S$, then we explain how to glue the local extensions to a global one. The local considerations are dealt with in \cref{step 1,step 2,step 3}. \Cref{gluing families} provides a general gluing device for $\Lambda$-marked families of ihs manifolds. The final conclusions are drawn in \cref{actual proof}.

\begin{defi}
We say that $(\cK,\iota)$ is an \emph{admissible extension} over $U$ when $U \subset \pdom\Lambda$ is an open subspace, $\cK$ is $\Lambda$-marked family of ihs manifolds whose period map is the canonical injection $U \inj \pdom\Lambda$, and $\iota \colon \cF_{S\cap U} \to \cK$ is a morphism of $\Lambda$-marked families over the canonical injection $S\cap U \inj U$.
\end{defi}

\begin{prop}\label{step 1}
Let $s \in S$ be a point. Then there exists an admissible extension $(\cK,\iota)$ over $U$ such that $s \in U$.
\end{prop}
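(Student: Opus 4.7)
The plan is to pull the Beauville--Bogomolov deformation of the fiber $\cF(s)$ across its period map to obtain a marked family over an open neighborhood of $s$ in $\pdom\Lambda$, and then to use semi-universality of that family to produce the morphism $\iota$.

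Concretely, I first apply \cref{local Torelli} to the ihs manifold $X \defeq \cF(s)$ to obtain a semi-universal deformation $(\cX_0, (i,j))$ with $\cX_0$ a family of ihs manifolds over a simply connected smooth complex space $S_0$. Using \cref{marking exists} I choose a $\Lambda$-marking $\mu_0$ of $\cX_0$ whose restriction to the fiber $\cX_0(j(0))$ agrees, via the biholomorphism $i$, with $\mu_s$. Local Torelli then guarantees that the period map $p \colon S_0 \to \pdom\Lambda$ of $(\cX_0, \mu_0)$ is a local biholomorphism at $j(0)$; after shrinking $S_0$ around $j(0)$, the map $p$ becomes an open embedding onto $U_0 \defeq p(S_0)$. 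Pulling $(\cX_0, \mu_0)$ back by $q \defeq p\inv \colon U_0 \to S_0$ in the sense of \cref{marked pullback sec} produces a $\Lambda$-marked family $\cK_0$ of ihs manifolds over $U_0$ whose period map, by \cref{pm functorial}, is the canonical injection $U_0 \inj \pdom\Lambda$.

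For the morphism $\iota$, I exploit the completeness of $(\cX_0, (i,j))$ applied to $\cF$ viewed as a deformation of $X$ via the inclusion of the fiber at $s$: after shrinking, there exist a connected open neighborhood $V$ of $s$ in $S$ and a morphism of families $\phi \colon \cF_V \to \cX_0$ over some $h \colon V \to S_0$ with $\phi \circ \iota_{\cF,s} = (i,j)$. By \cref{induced marking}, $\mu_0$ pulls back along $\phi$ to a unique $\Lambda$-marking of $\cF_V$ making $\phi$ a morphism of marked families; this induced marking and the restriction of the given marking of $\cF$ agree at $s$ by our choice of $\mu_0$, so by \cref{markings coincide} they coincide on $V$. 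Then \cref{pm functorial} tells us that the composition $p \circ h$ equals the canonical injection $V \inj \pdom\Lambda$, which forces $V \subset U_0$ and $h = \rest q V$. The universal property of the pullback $\cK_0 = q^*(\cX_0, \mu_0)$ now factors $\phi$ through a morphism of marked families $\psi \colon \cF_V \to \cK_0$ over the canonical injection $V \inj U_0$.

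Finally, since $S \inj \pdom\Lambda$ is an embedding, there exists an open subset $\tilde U \subset \pdom\Lambda$ with $V = S \cap \tilde U$; setting $U \defeq \tilde U \cap U_0$ yields an open neighborhood of $s$ in $\pdom\Lambda$ with $S \cap U = V$. Restricting gives $\cK \defeq (\cK_0)_U$ and $\iota \colon \cF_{S\cap U} \to \cK$ over the canonical injection $S\cap U \inj U$, hence an admissible extension $(\cK, \iota)$ with $s \in U$. The step I expect to be most delicate is the marking bookkeeping---ensuring that the morphism supplied by completeness of $\cX_0$ is automatically a morphism of marked families---but this is handled cleanly by the combined use of \cref{marking exists}, \cref{induced marking}, and \cref{markings coincide}.
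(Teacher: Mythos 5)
Your proposal is correct and follows essentially the same route as the paper: invoke \cref{local Torelli} for the fiber $\cF(s)$, use completeness of the Kuranishi-type family to map a neighborhood of $s$ in $S$ into it, arrange the markings, and use \cref{pm functorial} together with the local Torelli theorem to realize the base as an open subspace of $\pdom\Lambda$ with the period map the canonical injection. Your explicit pullback by $p\inv$ and the bookkeeping via \cref{marking exists}, \cref{induced marking}, and \cref{markings coincide} are just a spelled-out version of the paper's ``we can assume $U \subset \pdom\Lambda$'' identification and its appeal to the argument in the proof of \cref{torellilike}.
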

\begin{proof}
We can write $\cF = (\cY,\nu)$. By \cref{local Torelli} there exists a semi-universal deformation $(\cX,(i,j))$ of the fiber $\cY(s)$ such that $\cX$ is a family of ihs manifolds over a simply connected complex manifold $U$. Due to the universality we obtain a morphism of families $(a,b) \colon \cY_V \to \cX$ such that $V \subset S$ is an open subspace with $s \in V$ and $b(s) = j(0)$. Just like in the proof of \cref{torellilike} we can assume that $(a,b) \colon \cF_V \to (\cX,\mu)$ is a morphism of $\Lambda$-marked families for a $\Lambda$-marking $\mu$ of $\cX$. Since the period map $g \colon U \to \pdom\Lambda$ of $(\cX,\mu)$ is a local biholomorphism at $j(0)$, we can further assume that $U \subset \pdom\Lambda$ is an open subspace and $g$ is the associated canonical injection.
Now \cref{pm functorial} tells us that $\rest{h}V = g \circ b$. Recalling from \cref{extension intro} that $h \colon S \inj \pdom\Lambda$ is the canonical injection of a complex subspace, we conclude that $b \colon V \inj U$ is the canonical injection of a complex subspace, too. Finally we can replace $U$ by a smaller open subspace of $\pdom\Lambda$ so that $V = S \cap U$. Hence $((\cX,\mu),(a,b))$ is an admissible extension over $U$.
\end{proof}

When $(U_i)_{i \in I}$ is an indexed family of open subsets of a given topological space and $i,j,k \in I$ are indices, we emloy the standard notation $U_{ij} \coloneqq U_i\cap U_j$ and $U_{ijk} \coloneqq U_i\cap U_j\cap U_k$ for a double and a triple intersection, respectively.

\begin{prop}\label{step 2}
Let $(\cK_1,\iota_1)$ and $(\cK_2,\iota_2)$ be admissible extensions over $U_1$ and $U_2$, respectively. Then there exists an open subspace $W \subset V \coloneqq U_{12}$ together with a $W$-morphism of $\Lambda$-marked families
\[
\phi \colon (\cK_2)_W \to (\cK_1)_W
\]
such that $S \cap V \subset W$ and $\phi \circ \bar\iota_2 = \bar\iota_1$, where $\bar\iota_k \colon \cF_{S\cap V} \to (\cK_k)_W$ denotes the morphism induced by $\iota_k$.
\end{prop}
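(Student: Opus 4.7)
The plan is to reduce the statement to a direct invocation of \cref{universal map ihs}. For $k \in \set{1,2}$ the original morphism $\iota_k$ restricts to a morphism of $\Lambda$-marked families
\[
\iota_k^V \colon \cF_{S\cap V} \to (\cK_k)_V
\]
over the canonical injection $b \colon S\cap V \inj V$. My goal is to produce, over some open subspace $W \subset V$ containing $S\cap V$, a morphism $(\cK_2)_W \to (\cK_1)_V$ intertwining $\iota_1^V$ and $\iota_2^V$, and then to upgrade it into a $W$-morphism with target $(\cK_1)_W$.

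First I apply \cref{universal map ihs} with $S$, $\cF$, $(D_1,\cF_1,\iota_1,b_1)$, and $(D_2,\cF_2,\iota_2,b_2)$ replaced, respectively, by $S\cap V$, $\cF_{S\cap V}$, $(V,(\cK_1)_V,\iota_1^V,b)$, and $(V,(\cK_2)_V,\iota_2^V,b)$. The hypotheses are readily verified: by admissibility the period map of $(\cK_1)_V$ is the canonical injection $V \inj \pdom\Lambda$, which is an open embedding and therefore a local biholomorphism at every point; the base map $b$ is the canonical injection of a complex subspace and hence an embedding; and $V$ is second-countable as an open subspace of the complex manifold $\pdom\Lambda$. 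The corollary then yields an open subspace $W \subset V$ with $S\cap V \subset W$, together with a morphism of $\Lambda$-marked families
\[
\phi \colon (\cK_2)_W \to (\cK_1)_V \qquad\text{such that}\qquad \iota_1^V = \phi \circ \iota_2^V.
\]

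To finish, I upgrade $\phi$ to a $W$-morphism into $(\cK_1)_W$. Let $h \colon W \to V$ be the base map of $\phi$. By the functoriality of the period map (\cref{pm functorial}), the period map of $(\cK_2)_W$ equals the composition of $h$ with the period map of $(\cK_1)_V$. Since both period maps are canonical injections of subspaces of $\pdom\Lambda$, this identity forces $h$ to coincide with the canonical injection $W \inj V$. Consequently, $\phi$ factors uniquely through the canonical morphism of marked families $(\cK_1)_W \to (\cK_1)_V$ as a $W$-morphism $(\cK_2)_W \to (\cK_1)_W$, which is the desired $\phi$. The relation $\phi \circ \bar\iota_2 = \bar\iota_1$ transcribes the identity $\iota_1^V = \phi \circ \iota_2^V$ once we view each $\iota_k^V$ as the morphism $\bar\iota_k \colon \cF_{S\cap V} \to (\cK_k)_W$ furnished by the inclusion $S\cap V \inj W$. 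I do not anticipate a genuine obstacle beyond bookkeeping; the substantive work of comparing morphisms pointwise on $S\cap V$ and spreading the comparison to a neighborhood has been absorbed into \cref{universal map ihs} and, ultimately, the gluing \cref{partial gluing}.
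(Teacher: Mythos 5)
Your proposal is correct and follows the paper's own argument essentially verbatim: apply \cref{universal map ihs} to $\cF_{S\cap V}$, $(\cK_1)_V$, $(\cK_2)_V$ with the induced morphisms, then use \cref{pm functorial} together with the fact that both period maps are canonical injections to see that the resulting morphism lies over $W \inj V$ and hence factors as the desired $W$-morphism into $(\cK_1)_W$. The only difference is that you spell out the hypothesis checks (local biholomorphism, embedding, second countability) that the paper dismisses as clearly fulfilled.
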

\begin{proof}
We apply \cref{universal map ihs} to the $\Lambda$-marked families $\cF_{S\cap V}$, $(\cK_1)_{V}$, and $(\cK_2)_{V}$ and the morphisms of $\Lambda$-marked families $\bar\iota_k \colon \cF_{S\cap V} \to (\cK_k)_{V}$ induced by $\iota_k$ for $k \in \set{1,2}$.
The assumptions of \cref{universal map ihs} are clearly fulfilled, so we obtain an open subspace $W\subset V$ and a morphism of $\Lambda$-marked families $\phi' \colon (\cK_2)_W \to (\cK_1)_{V}$ with $S\cap V\subset W$ and $\bar\iota_1 = \phi' \circ \bar\iota_2$.
As the period maps of $(\cK_1)_W$ and $(\cK_2)_{V}$ are the canonical injections $W \inj \pdom\Lambda$ and $V \inj \pdom\Lambda$, respectively, \cref{pm functorial} implies that $\phi'$ is a morphism over the canonical injection $W \inj V$. Thus $\phi'$ induces the desired morphism of marked families $\phi$.
\end{proof}

\begin{prop}\label{step 3}
For $k \in \set{1,2,3}$ let $(\cK_k,\iota_k)$ be an admissible extension over $U_k$. Moreover, for $i,j \in \set{1,2,3}$ with $i<j$, let $W_{ij} \subset U_{ij}$ be an open subspace and
\[
\phi_{ij} \colon (\cK_j)_{W_{ij}} \to (\cK_i)_{W_{ij}}
\]
be a $W_{ij}$-morphism of $\Lambda$-marked families such that $S \cap U_{ij} \subset W_{ij}$ and ${\bar\iota_i = \phi_{ij} \circ \bar\iota_j}$ for the induced morphisms. Then ${W\defeq W_{12} \cap W_{13} \cap W_{23}}$ contains an open subspace $Z$ such that
\[
(\phi_{13})_Z = (\phi_{12})_Z \circ (\phi_{23})_Z
\]
and $S \cap U_{123} \subset Z$.
\end{prop}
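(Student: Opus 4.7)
The plan is to compare the two $W$-morphisms of $\Lambda$-marked families
\[
\psi \coloneqq (\phi_{13})_W \qquad \text{and} \qquad \psi' \coloneqq (\phi_{12})_W \circ (\phi_{23})_W
\]
from $(\cK_3)_W$ to $(\cK_1)_W$ and to exhibit an open subspace $Z \subset W$ containing $S \cap U_{123}$ on which the two coincide. Both expressions are well-defined on all of $W$, since $W \subset W_{13}$ and since $(\phi_{23})_W \colon (\cK_3)_W \to (\cK_2)_W$ and $(\phi_{12})_W \colon (\cK_2)_W \to (\cK_1)_W$ have matching source and target.

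The first step is to check that $\psi(s) = \psi'(s)$ as biholomorphisms of fibers for every $s \in S \cap U_{123}$. Because $S \cap U_{ij} \subset W_{ij}$ by hypothesis, any such $s$ lies in $W$. Applying the fibre relation $\bar\iota_i(s) = \phi_{ij}(s) \circ \bar\iota_j(s)$ twice yields
\[
\psi'(s) \circ \bar\iota_3(s) = \phi_{12}(s) \circ \phi_{23}(s) \circ \bar\iota_3(s) = \phi_{12}(s) \circ \bar\iota_2(s) = \bar\iota_1(s) = \phi_{13}(s) \circ \bar\iota_3(s) = \psi(s) \circ \bar\iota_3(s),
\]
and cancelling the biholomorphism $\bar\iota_3(s) \colon \cF(s) \to \cK_3(s)$ gives $\psi(s) = \psi'(s)$.

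The concluding step is to invoke \cref{eta=eta' coro} for the pair $\psi,\psi'$, both viewed as $W$-morphisms over $\id W$. Its hypothesis that $\Hsh0{\cK_3(s)}{\Theta_{\cK_3(s)}} = \set0$ for every $s \in W$ is satisfied because the fibers of $\cK_3$ are ihs manifolds---the same vanishing fact used in the proof of \cref{isotrivial}. The corollary then produces an open subspace $Z \subset W$ on which $\psi$ and $\psi'$ restrict to a common morphism of families, and by the previous paragraph $S \cap U_{123} \subset Z$. Equality as morphisms of $\Lambda$-marked families is automatic once the underlying morphisms of families coincide, since \cref{marked mor diagram} depends only on the underlying holomorphic data. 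I do not foresee any serious obstacle: the argument rests entirely on the fibrewise rigidity afforded by the triviality of $\Hsh0\cdot\Theta$ for ihs manifolds, which converts fibrewise agreement on $S \cap U_{123}$ into agreement on an open neighborhood.
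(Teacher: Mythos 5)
Your proof is correct, but it takes a shortcut that differs from the paper's argument. The paper simply records the two relations $\bar\iota_1 = (\phi_{12})_W \circ (\phi_{23})_W \circ \bar\iota_3$ and $\bar\iota_1 = (\phi_{13})_W \circ \bar\iota_3$ and then quotes \cref{universal un} of \cref{universal map}, whose proof has two ingredients: the same fibrewise cancellation you perform, plus an appeal to the universality of $\cK_1$ at the points of $S$ (ultimately \cref{local Torelli} via \cref{torellilike}) to force the two \emph{base} maps to agree near $S$, before invoking the rigidity statement \cref{eta=eta' coro}. You observe that in the present situation the base-map comparison is vacuous, since $(\phi_{13})_W$ and $(\phi_{12})_W\circ(\phi_{23})_W$ are both $W$-morphisms over $\id W$; hence the fibrewise identity $\psi(s)=\psi'(s)$ for $s\in S\cap U_{123}$, obtained by cancelling the biholomorphism $\bar\iota_3(s)$, feeds directly into \cref{eta=eta' coro}, whose hypothesis $\Hsh0{\cK_3(s)}{\Theta_{\cK_3(s)}}=\set0$ holds for every $s\in W$ because all fibers are ihs manifolds. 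So your route uses only the rigidity input (\cref{rigidity} via \cref{eta=eta' coro}) and none of the semi-universality machinery, which is a genuine simplification at this spot; what the paper's phrasing buys is brevity, since \cref{universal map} is already proved and packages both ingredients. Your closing remarks—that agreement of the underlying morphisms of families gives agreement as morphisms of $\Lambda$-marked families, and that the restriction produced by \cref{eta=eta' coro} identifies with $(\phi_{13})_Z$ and $(\phi_{12})_Z\circ(\phi_{23})_Z$—are routine bookkeeping and correctly handled.
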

\begin{proof}
For $k \in \set{1,2,3}$ let $\bar\iota_k \colon \cF_{S\cap W} \to (\cK_k)_W$ denote the morphism of $\Lambda$-marked families that is induced by $\iota_k$. Then by assumption
\[
\bar\iota_1 = (\phi_{12})_W \circ \bar\iota_2 = (\phi_{12})_W \circ (\phi_{23})_W \circ \bar\iota_3 \quad \text{and} \quad \bar\iota_1 = (\phi_{13})_W \circ \bar\iota_3 .
\]
Thus the claim follows immediately from \cref{universal un} of \cref{universal map} if we notice that $S\cap U_{123} \subset S\cap W$; in fact, the latter two sets are equal.
\end{proof}

\begin{lemm}\label{gluing families}
Let $D$ be a complex space and $(\cF_i)_{i\in I}$ be an indexed family such that $\cF_i$ is a $\Lambda$-marked family of ihs manifolds over an open subspace $U_i \subset D$ for every $i \in I$. Moreover let $(\phi_{ij})_{i,j \in I}$ be an indexed family such that, for all $i,j,k \in I$, firstly,
\[
\phi_{ij} \colon (\cF_j)_{U_{ij}} \to (\cF_i)_{U_{ij}}
\]
is a $U_{ij}$-morphism of $\Lambda$-marked families and, secondly,
\[
(\phi_{ik})_{U_{ijk}} = (\phi_{ij})_{U_{ijk}} \circ (\phi_{jk})_{U_{ijk}}.
\]

Then there exists a $\Lambda$-marked family of ihs manifolds $\tilde\cF$ over the open subspace $\tilde U \coloneqq \bigcup_{i \in I}U_i$ of $D$ together with an indexed family $(\eta_i)_{i \in I}$ such that, for all $i,j \in I$,
\[
\eta_i \colon \cF_i \to \tilde\cF_{U_i}
\]
is a $U_i$-morphism of $\Lambda$-marked families and
\[
(\eta_i)_{U_{ij}} \circ \phi_{ij} = (\eta_j)_{U_{ij}}.
\]
\end{lemm}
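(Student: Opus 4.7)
The strategy is the standard descent/gluing construction for complex spaces, upgraded to $\Lambda$-marked families of ihs manifolds. Write $\cF_i = (X_i, U_i, f_i, \mu_i)$ and write $\phi_{ij} = (g_{ij}, \id{U_{ij}})$, so that $g_{ij}\colon f_j^{-1}(U_{ij}) \to f_i^{-1}(U_{ij})$ is a biholomorphism compatible with $f_i$ and $f_j$. The cocycle identity $\phi_{ik}|_{U_{ijk}} = \phi_{ij}|_{U_{ijk}} \circ \phi_{jk}|_{U_{ijk}}$ gives, in particular, $g_{ii} = \id{X_i}$ (take $i=j=k$) and $g_{ij} \circ g_{ji} = \id{}$ (take $i=k$), so the $g_{ij}$ form a genuine gluing datum on the disjoint union $\bigsqcup_i X_i$.

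First I would construct the total space $\tilde X$ as the quotient of $\bigsqcup_i X_i$ by the equivalence relation identifying $x \in f_j^{-1}(U_{ij})$ with $g_{ij}(x) \in f_i^{-1}(U_{ij})$. The maps $f_i$ descend to a single continuous map $\tilde f\colon \tilde X \to \tilde U$, and the canonical injections $\eta_i\colon X_i \to \tilde X$ are open. To give $\tilde X$ the structure of a complex space it suffices to declare that the $\eta_i$ are open analytic embeddings; compatibility of structure sheaves on overlaps is exactly the statement that the $g_{ij}$ are biholomorphisms satisfying the cocycle condition. Properness and submersivity of $\tilde f$ can be checked on the open cover $(\eta_i(X_i))_{i\in I}$ since properness is local on the base and $\tilde f \circ \eta_i = f_i$. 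The main obstacle at this stage is Hausdorffness of $\tilde X$: since the fibers are compact Hausdorff and $\tilde f$ is a proper local map to the Hausdorff space $\tilde U$, two distinct points in the same fiber $\tilde f^{-1}(s)$ can be separated inside an appropriate $\eta_i(X_i)$; for two points in different fibers one separates their images in $\tilde U$ and pulls back. Thus $\tilde\cF \defeq (\tilde X, \tilde U, \tilde f)$ is a family of compact complex manifolds, and its fibers, being biholomorphic to fibers of the $\cF_i$, are ihs manifolds.

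Next I would glue the markings. Because the $\phi_{ij}$ are morphisms of $\Lambda$-marked families, the induced topological base change isomorphisms $\alpha_{ij}\colon \Rsh2{f_i}{\csh\ZZ{X_i}}|_{U_{ij}} \to \Rsh2{f_j}{\csh\ZZ{X_j}}|_{U_{ij}}$ intertwine $\mu_i|_{U_{ij}}$ and $\mu_j|_{U_{ij}}$ via the identity of $\csh\Lambda{U_{ij}}$; this is precisely \cref{marked mor diagram} for $\phi_{ij}$. By proper base change $\eta_i^{-1}\bigl(\Rsh2{\tilde f}{\csh\ZZ{\tilde X}}\bigr) \cong \Rsh2{f_i}{\csh\ZZ{X_i}}$, so the $\mu_i$ yield local sections of $\sHom(\Rsh2{\tilde f}{\csh\ZZ{\tilde X}}, \csh\Lambda{\tilde U})$ that agree on double overlaps by the cocycle condition; these sections glue to a single isomorphism $\tilde\mu\colon \Rsh2{\tilde f}{\csh\ZZ{\tilde X}} \to \csh\Lambda{\tilde U}$. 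Its pointwise restrictions are the $\Lambda$-markings of the fibers inherited from the $\mu_i$, so $\tilde\mu$ is a $\Lambda$-marking of $\tilde\cF$.

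Finally I would define $\eta_i\colon \cF_i \to \tilde\cF_{U_i}$ as the canonical morphism over $\id{U_i}$; it is a $U_i$-morphism of families by construction, and a morphism of $\Lambda$-marked families by the way $\tilde\mu$ was built from $(\mu_i)_{i\in I}$. The identity $(\eta_i)_{U_{ij}} \circ \phi_{ij} = (\eta_j)_{U_{ij}}$ is then just the definition of the equivalence relation on $\bigsqcup_i X_i$ used to form $\tilde X$. The only slightly delicate point beyond Hausdorffness is checking that restriction to a smaller $\Lambda$-marked family is compatible with the gluing, but this is immediate from \cref{marked pullback sec} and \cref{induced marking}.
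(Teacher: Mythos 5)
Your proposal is correct and follows essentially the same route as the paper: glue the total spaces along the $g_{ij}$ (the paper delegates the Hausdorffness/gluing details to Fischer's notion of Hausdorff gluing data, where your hand-made separation argument works because each fiber over $s\in U_i$ lies entirely in the chart $\eta_i(X_i)$), then obtain $\tilde f$ from the compatibility of the $g_{ij}$ with the projections, and finally glue the markings $\mu_i$ via the base-change identifications, which agree on overlaps precisely because the $\phi_{ij}$ are morphisms of $\Lambda$-marked families. No gaps worth flagging.
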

\begin{proof}
Let us write $\cF_i$ as $(X_i,U_i,f_i,\mu_i)$ and $\phi_{ij}$ as $(g_{ij},\id{U_{ij}})$ for all $i,j \in I$. Then the quadruple
\[
\paren*{I,(X_i)_{i\in I},(f_i^{-1}(U_{ij}))_{i,j \in I},(g_{ij})_{i,j \in I}}
\]
is Hausdorff gluing data for a complex space in the sense of Fischer \cite[\nopp 0.24]{Fischer}.
Thus we obtain a complex space $\tilde X$ together with an indexed family $(a_i)_{i \in I}$ such that $a_i \colon X_i \to \tilde X$ is an open embedding and
\[
\rest{a_i}{f_i^{-1}(U_{ij})} \circ g_{ij} = \rest{a_j}{f_j^{-1}(U_{ij})}
\]
for all $i,j \in I$. In fact we can take the underlying topological space of $\tilde X$ to be the quotient of the disjoint union $\bigsqcup_{i\in I}X_i$ by the equivalence relation under which $(x,i) \sim (y,j)$ if and only if $x = g_{ij}(y)$; then $a_i$ is given by $a_i(x) = [(x,i)]$. Since for all $i,j \in I$
\[
\rest{f_i}{f_i^{-1}(U_{ij})} \circ g_{ij} = \rest{f_j}{f_j^{-1}(U_{ij})},
\]
there exists a unique holomorphic map $\tilde f \colon \tilde X \to \tilde U$ such that $\tilde f \circ a_i = b_i \circ f_i$ for all $i \in I$ where $b_i \colon U_i \to \tilde U$ denotes the canonical injection. Therefore the triple $(\tilde X,\tilde U,\tilde f)$ is a family of ihs manifolds and, for all $i \in I$, the pair $(a_i,b_i)$ is a morphism of families between $(X_i,U_i,f_i)$ and $(\tilde X,\tilde U,\tilde f)$.

Defining $(\tilde X_i,U_i,\tilde f_i) \coloneqq (\tilde X,\tilde U,\tilde f)_{U_i}$ for $i \in I$, there exists a unique morphism of sheaves of abelian groups
\[
\tilde\mu_i \colon \Rsh2{(\tilde f_i)}{\csh\ZZ{\tilde X_i}} \to \csh\Lambda{U_i}
\]
such that $(a_i,b_i)$ induces a $U_i$-morphism of $\Lambda$-marked families of ihs manifolds
\[
\eta_i \colon \cF_i = (X_i,U_i,f_i,\mu_i) \to (\tilde X_i,U_i,\tilde f_i,\tilde\mu_i).
\]
We regard $\tilde\mu_i$ as a morphism of sheaves of abelian groups
\[
\tilde\mu_i \colon \rest{\Rsh2{\tilde f}{\csh\ZZ{\tilde X}}}{U_i} \to \rest{\csh\Lambda{\tilde U}}{U_i}
\]
and notice that $\rest{\tilde\mu_i}{U_{ij}} = \rest{\tilde\mu_j}{U_{ij}}$ for all $i,j \in I$ since $\phi_{ij}$ is a morphism of $\Lambda$-marked families.
Hence there exists a unique morphism of sheaves of abelian groups
\[
\tilde\mu \colon \Rsh2{\tilde f}{\csh\ZZ{\tilde X}} \to \csh\Lambda{\tilde U}
\]
such that $\rest{\tilde\mu}{U_i} = \tilde\mu_i$ for all $i \in I$. As a consequence $\tilde\cF \coloneqq (\tilde X,\tilde U,\tilde f,\tilde\mu)$ is a $\Lambda$-marked family of ihs manifolds with the property that $\eta_i \colon \cF_i \to \tilde\cF_{U_i}$ is a $U_i$-morphism of $\Lambda$-marked families for every $i \in I$.
\end{proof}

\subsection{Proof of \cref*{ihs extension thm}}\label{actual proof}
We proceed in three steps.

\subsubsection{Local extensions}
From \cref{step 1} we deduce the existence of indexed families $(U_i)_{i\in I}$ and $\paren*{(\cK_i,\iota_i)}_{i\in I}$ such that, firstly, $(\cK_i,\iota_i)$ is an admissible extension over $U_i$ for all $i \in I$ and, secondly, $S$ is contained in the open subspace $D \coloneqq \bigcup_{i\in I}U_i$ of $\pdom\Lambda$. In fact we can take $I$ equal to the set of points of $S$ and stipulate that $s \in U_s$ for all $s \in S$.
Employing \cref{step 2} we deduce the existence of an indexed family $\paren*{(W_{ij},\phi_{ij})}_{i,j \in I}$ such that $W_{ij} \subset U_{ij}$ is an open subspace with $S\cap U_{ij} \subset W_{ij}$ and
\[
\phi_{ij} \colon (\cK_j)_{W_{ij}} \to (\cK_i)_{W_{ij}}
\]
is a $W_{ij}$-morphism of $\Lambda$-marked families satisfying $\bar\iota_i = \phi_{ij} \circ \bar\iota_j$ for all $i,j \in I$.
Employing \cref{step 3} we deduce the existence of an indexed family $(Z_{ijk})_{i,j,k \in I}$ such that $Z_{ijk} \subset W_{ij}\cap W_{ik}\cap W_{jk}$ is an open subspace with
\[
(\phi_{ik})_{Z_{ijk}} = (\phi_{ij})_{Z_{ijk}} \circ (\phi_{jk})_{Z_{ijk}}
\]
and $S \cap U_{ijk} \subset Z_{ijk}$.

\subsubsection{Shrinking}
Like in \cref{shrinking} there exists an indexed open cover $(V_i)_{i\in I}$ of $D$ such that the family $(\closure{V_i})_{i\in I}$ of closed subsets of $D$ is locally finite and satisfies $\closure{V_i} \subset U_i$ for all $i \in I$.
For every $x\in D$ define $I(x) \defeq \setb{i\in I}{x \in \closure{V_i}}$ and consider set
\[
W \coloneqq \setb*{x \in D}{\forall i,j \in I(x): x\in W_{ij} \text{ and } \forall i,j,k\in I(x): x\in Z_{ijk}}.
\]
Then in analogy to the proof of \cref{partial gluing} we verify that
\begin{enumerate}
\item\label{prop1} $W$ is open in $D$,
\item\label{prop2} $S \subset W$,
\item\label{prop3} $(V_i\cap W)\cap (V_j\cap W) \subset W_{ij}$ for all $i,j \in I$, and
\item\label{prop4} $(V_i\cap W)\cap (V_j\cap W)\cap (V_k\cap W) \subset Z_{ijk}$ for all $i,j,k \in I$.
\end{enumerate}
As $(\closure{V_i})_{i\in I}$ is locally finite, the set $W$ is locally the intersection of finitely many of the open subspaces $W_{ij}$ and $Z_{ijk}$, which implies \cref{prop1}.
Let $s\in S$ be a point and $i,j\in I(s)$. Then $s\in\closure{V_i}\cap\closure{V_j}\subset U_{ij}$ and $s\in S\cap U_{ij}\subset W_{ij}$. If moreover $k\in I(x)$, then $s\in U_{ijk}$ and $s\in S\cap U_{ijk}\subset Z_{ijk}$. Thus $s\in W$, which proves \cref{prop2}.
If $x\in V_{ij}\cap W$, then $i,j\in I(x)$; and if $x\in V_{ijk}\cap W$, then $i,j,k\in I(x)$. So \cref{prop3,prop4} follow immediately from the definition of $W$.

\subsubsection{Gluing}
Let the indexed families $(V'_i)_{i\in I}$, $(\cF_i)_{i\in I}$, and $(\phi'_{ij})_{i,j\in I}$ be given by
\[
V'_i = V_i \cap W, \quad
\cF_i = (\cK_i)_{V'_i}, \quad \text{and} \quad \phi'_{ij} = (\phi_{ij})_{V'_{ij}},
\]
respectively. Then applying \cref{gluing families}, we obtain a $\Lambda$-marked family of ihs manifolds $\tilde\cF$ over the open subspace $W = \bigcup_{i\in I}V'_i$ of $D$ as well as an indexed family $(\eta_i)_{i\in I}$ of $V'_i$-morphisms of $\Lambda$-marked families $\eta_i \colon \cF_i \to \tilde\cF_{V'_i}$ such that
\[
(\eta_i)_{V'_{ij}} \circ \phi'_{ij} = (\eta_j)_{V'_{ij}}
\]
for all $i,j \in I$.

Writing $\bar\iota_k \colon \cF_{S\cap V'_k} \to \cF_k$ for the morphism that is induced by $\iota_k$, we conclude that the compositions $\eta_i \circ \bar\iota_i$ and $\eta_j \circ \bar\iota_j$ agree on the intersection $V'_{ij}$ for all $i,j \in I$. Hence, as $S$ is contained in the union $\bigcup_{i\in I}V'_i$, there exists a unique morphism of $\Lambda$-marked families $\eta \colon \cF \to \tilde\cF$ over $h \colon S\inj W \subset \pdom\Lambda$ such that $\eta$ induces $\eta_i \circ \bar\iota_i$ on $V'_i$ for all $i \in I$. Last but not least, the period map of $\tilde\cF$ is the canonical injection $W\inj \pdom\Lambda$ simply because, for all $i \in I$, the period map of $\cF_i$ is the canonical injection $V'_i \inj \pdom\Lambda$. \qed

\printbibliography
\end{document}